\theoremstyle{plain}
\newtheorem{theorem}{Theorem}[section]
\newtheorem{lemma}[theorem]{Lemma}
\newtheorem{proposition}[theorem]{Proposition}
\newtheorem{corollary}[theorem]{Corollary}
\theoremstyle{definition}
\newtheorem{definition}[theorem]{Definition}
\newtheorem{remark}[theorem]{Remark}
\newtheorem{example}[theorem]{Example}
\newtheorem{question}[theorem]{Question}
\newtheorem{running example}[theorem]{Running Example}
\numberwithin{equation}{section}
\DeclareMathOperator{\lcm}{lcm}
\DeclareMathOperator{\mdeg}{mdeg}
\DeclareMathOperator{\mingens}{MinGens}
\renewcommand{\P}{\mathcal{P}}
\newcommand{\FF}{\mathbb{F}}
\newcommand{\LL}{\mathbb{L}}
\newcommand{\NN}{\mathbb{N}}
\newcommand{\im}{\mathrm{im}}
\newcommand{\lk}{\mathrm{lk}}
\newcommand{\pd}{\mathrm{pd}}
\newcommand{\reg}{\mathrm{reg}}
\newcommand{\depth}{\mathrm{depth}}
\newcommand{\qand}{\quad \mbox{and}\quad }
\newcommand{\qor}{\quad \mbox{or}\quad }
\newcommand{\qif}{\quad \mbox{if}\quad }
\newcommand{\qfor}{\quad \mbox{for}\quad }
\newcommand{\qforall}{\quad \mbox{for all}\quad }
\newcommand{\qforsome}{\quad \mbox{for some}\quad }
\newcommand{\qwhere}{\quad \mathrm{where}\quad  }
\newcommand{\st}{\ \colon \ }
\newcommand{\supp}{\mathrm{Supp}}
\newcommand{\rH}{\tilde{H}}
\newcommand{\ba}{\mathbf{a}}
\newcommand{\bb}{\mathbf{b}}
\newcommand{\bc}{\mathbf{c}}
\newcommand{\bd}{\mathbf{d}}
\newcommand{\bh}{\mathbf{h}}
\newcommand{\bg}{\mathbf{g}}
\newcommand{\bx}{\mathbf{x}}
\newcommand{\bcx}{\bx^\bc}
\newcommand{\bdx}{\bx^\bd}
\newcommand{\bgx}{\bx^\bg}
\newcommand{\bm}{\mathbf{m}}
\newcommand{\m}{\bm}
\newcommand{\bn}{\mathbf{n}}
\newcommand{\bu}{\mathbf{u}}
\newcommand{\bv}{\mathbf{v}}
\newcommand{\bw}{\mathbf{w}}
\newcommand{\ssm}{\smallsetminus}
\newcommand{\introthmname}{}
\newtheorem{introthminn}{\introthmname}
\title{Building monomial ideals with similar Betti numbers}
\author[S. Faridi]{Sara Faridi}\thanks{Faridi's research is supported by
NSERC Discovery Grant 2023-05929.}
\address[S. Faridi]
{Department of Mathematics \& Statistics, 
Dalhousie University, 
6297 Castine Way, 
PO BOX 15000, 
Halifax, NS, 
Canada B3H 4R2 
}
\email{faridi@dal.ca}
\author[P. Li]{Peilin Li}
\address[P. Li]
{Department of Mathematics,
1984 Mathematics Road,
Vancouver, BC Canada V6T 1Z2
}
\email{pl254360@math.ubc.ca}
 \keywords{Free resolution, simplicial collapse, projective dimension, regularity,  multigraded betti numbers}
\subjclass[2020]{13D02, 13F55, 55P10, 05E45,  55U10, 05E40}
\begin{document}

\begin{abstract}
Motivated by the fact that as the number of generators of an ideal grows so does the complexity of calculating relations among the
generators,  this paper identifies collections of monomial ideals
  with a growing number of generators which have predictable free resolutions. We use elementary collapses from discrete homotopy theory to construct infinitely many monomial ideals, with an arbitrary number of generators, which have similar or the same betti numbers. We show that the Cohen-Macaulay property in each unmixed (pure) component of the ideal is preserved as the ideal is expanded.
 \end{abstract}

\maketitle

\section{Introduction}

A minimal free resolution of an ideal generated by a set of homogeneous polynomials encodes information about the relations between those polynomials, and the relations between those relations, and so on. The minimal number of such relations in each step is encoded in (unique) sequence of numbers known as the {\it betti numbers} of the ideal.

Most algebraic invariants of the ideal
-- such as projective dimension, Castelnuovo-Mumford regularity,
Hilbert series -- can be computed via the minimal free resolution.
This gives minimal free resolutions immense value for researchers in
commutative algebra and algebraic geometry. However, computing
resolutions remains a difficult problem, where new tools
are developed to construct free resolutions for special classes of
ideals.

The focus of the current paper is on ideals generated by monomials, or
products of variables in a polynomial ring.  This class of ideals
enjoys special combinatorial properties. In particular, free
resolutions of monomial ideals and their betti numbers can be
calculated using chain complexes of cell complexes under certain conditions. When such conditions hold, we say the complex \emph{supports} a free resolution of the monomial ideal. This method was
first introduced by Diana Taylor~\cite{T} in her thesis for simplices,
and then generalized to the larger class of cell complexes by Bayer
and Sturmfels~\cite{BS}.

A natural outcome of this approach -- modeling a free resolution using
a topological object -- is the following general question: if two
topological objects have the same \say{shape}, how different can the
free resolutions they support be?

More precisely, consider two simplicial complexes that are
homotopy equivalent. How
can we compare the free resolutions supported by those two simplicial complexes? 

The homotopy equivalence we are concerned with in this paper is
{\it simplicial collapsing}: deletion of a face of a simplicial
complex that is contained in exactly one maximal face.

Suppose $\bn$ is part of the minimal monomial generating set of the
monomial ideal $I$, and $I$ has a (minimal) free resolution supported
on the simplicial complex $\Delta$ in \cref{f:intro} on the left,
where we have labeled one of the vertices with $\bn$ (and the other
vertices will be similarly labeled with the other generators of $I$).

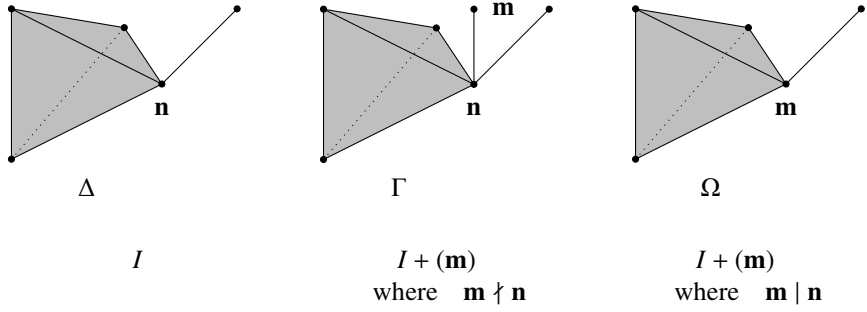
\begin{figure}[!htbp]
$$\begin{array}{c c c}
\begin{tikzpicture}
\coordinate (B) at (0, 0);
\coordinate (C) at (2, 1);
\coordinate (D) at (3, 2 );
\coordinate (E) at (0, 2);
\coordinate (F) at (1.5,1.75);
\coordinate (G) at (1,0);
\node[label = below:$\Delta$] at (G) {};
\draw[draw = none, fill=lightgray] (C) -- (E) -- (F);
\draw[draw = none, fill=lightgray] (B) -- (C) -- (E);
\draw[dotted] (B) -- (F);
\draw[-] (E) -- (F);
\draw[-] (C) -- (F);
\draw[-] (B) -- (C);
\draw[-] (B) -- (E);
\draw[-] (C) -- (D);
\draw[-] (C) -- (E);
\draw[black, fill=black] (B) circle(0.04);
\draw[black, fill=black] (C) circle(0.04);
\draw[black, fill=black] (D) circle(0.04);
\draw[black, fill=black] (E) circle(0.04);
\draw[black, fill=black] (F) circle(0.04);
\node[label = below :$\bn$] at (C) {};
\end{tikzpicture}
\quad & \quad
\begin{tikzpicture}
\coordinate (B) at (0, 0);
\coordinate (C) at (2, 1);
\coordinate (D) at (3, 2 );
\coordinate (E) at (0, 2);
\coordinate (F) at (1.5,1.75);
\coordinate (G) at (1,0);
\node[label = below:$\Gamma$] at (G) {};
\draw[draw = none, fill=lightgray] (C) -- (E) -- (F);
\draw[draw = none, fill=lightgray] (B) -- (C) -- (E);
\draw[dotted] (B) -- (F);
\draw[-] (E) -- (F);
\draw[-] (C) -- (F);
\draw[-] (B) -- (C);
\draw[-] (B) -- (E);
\draw[-] (C) -- (D);
\draw[-] (C) -- (E);
\draw[black, fill=black] (B) circle(0.04);
\draw[black, fill=black] (C) circle(0.04);
\draw[black, fill=black] (D) circle(0.04);
\draw[black, fill=black] (E) circle(0.04);
\draw[black, fill=black] (F) circle(0.04);
\node[label = below :$\bn$] at (C) {};
\coordinate (A) at (2, 2);
\draw[black, fill=black] (A) circle(0.04);
\draw[-] (A) -- (C);
\node[label = right:$\bm$] at (A) {};
\end{tikzpicture}
\quad & \quad 
\begin{tikzpicture}
\coordinate (B) at (0, 0);
\coordinate (C) at (2, 1);
\coordinate (D) at (3, 2 );
\coordinate (E) at (0, 2);
\coordinate (F) at (1.5,1.75);
\coordinate (G) at (1,0);
\node[label = below:$\Omega$] at (G) {};
\draw[draw = none, fill=lightgray] (C) -- (E) -- (F);
\draw[draw = none, fill=lightgray] (B) -- (C) -- (E);
\draw[dotted] (B) -- (F);
\draw[-] (E) -- (F);
\draw[-] (C) -- (F);
\draw[-] (B) -- (C);
\draw[-] (B) -- (E);
\draw[-] (C) -- (D);
\draw[-] (C) -- (E);
\draw[black, fill=black] (B) circle(0.04);
\draw[black, fill=black] (C) circle(0.04);
\draw[black, fill=black] (D) circle(0.04);
\draw[black, fill=black] (E) circle(0.04);
\draw[black, fill=black] (F) circle(0.04);
\node[label = below :$\bm$] at (C) {};
\end{tikzpicture}\\
&&\\
I & I+(\bm) & I+(\bm)\\
&
 \qwhere \m \nmid \bn & 
 \qwhere \m \mid \bn \\
&&
\end{array}$$
\caption{Three complexes of the same homotopy type}\label{f:intro}
\end{figure}
Now the simplicial complex $\Delta$ in \cref{f:intro} is, via an \say{elementary collapse}, homotopy equivalent to the simplicial complex $\Gamma$ in the same picture, and as a consequence $\Delta$ and $\Gamma$ have the same homology groups. Moreover, $\Delta$ and $\Omega$ are exactly the same complex.

The main question of this paper is the following: 

\begin{question} If we start from a monomial ideal $I=(\bm_1,\ldots,\bm_q,\bn)$
  with a (minimal) free resolution supported on a simplicial complex
  $\Delta$ (an example is $\Delta$ in \cref{f:intro}), can we
  characterize monomials $\bm$ such that $I+(\bm)$ has a (minimal)
  free resolution supported on $\Gamma$? Or monomials $\bm$ such that
  $J=(\bm_1,\ldots,\bm_q,\bm)$ has a (minimal) free resolution
  supported on $\Omega$?
\end{question}

With the monomial $\bn$ chosen as a minimal generator of a monomial ideal $I$ as above, in this paper we define (\cref{d:Cax}) a set of monomials 
$C_I(\bn)$ from which we can pick monomials $\bm$ such that $\Gamma$ or $\Omega$ supports a free resolution of  $I+(\bm)$. More precisely, we prove the following.

\begin{theorem}\label{t:intro}
  Let $I$ be a monomial ideal in a polynomial ring $S$ with at least two variables, let $\bm,\bn$ be monomials with $\bn \in \mingens(I)$.
  Let $\Delta$, $\Omega$ and $\Gamma$ be simplicial complexes as in \cref{f:intro}. 

 \begin{enumerate}
 \item (\cref{p:expansion}) If $\bm \nmid \bn$, then
\smallskip

  \begin{center}
  \begin{tabular}{c}
  $\Gamma$ supports a (minimal) \\
  free resolution of $I+(\bm)$ 
  \end{tabular}
  $\iff$ 
  \begin{tabular}{c}
  $\bm \in C_I(\bn)$ and $\Delta$ supports a (minimal)\\
  free resolution of $I$.
  \end{tabular}
  \end{center} 
 \smallskip

   \item (\cref{c:replacement}) If $\bm \mid \bn$  and $\bm \in C_I(\bn)$, then  
\smallskip

   \begin{center}
 \begin{tabular}{c}
  $\Omega$ supports a (minimal)\\
  free resolution of $I+(\bm)$ 
  \end{tabular}
  $\iff$ 
  \begin{tabular}{c}
  $\Delta$ supports a (minimal)\\
  free resolution of $I$.
  \end{tabular}
    \end{center} 
   \smallskip 
   
   \item (\cref{T:Betti number unchange}) If $\bm \in C_I(\bn)$, for $i \geq 2$ $$\beta_{i,j}(S/I)=\beta_{i,j}(S/(I+(\bm))),$$   and similarly for multigraded betti numbers. Moreover, the first and second betti numbers differ by at most $1$.
   \item (\cref{R: convenientbuild}) If a  monomial $\bv \neq 1$ satisfies $\gcd(\bn,\bv)=1$, then $\beta_i(S/I)=\beta_i(S/(\bv I+(\bn)))$ for all $i\geq 0$.
   \item (\cref{s:last section}) If $\bm \in C_I(\bn)$, the depth, projective dimension, and regularity of $I+(\bm)$ is either equal to or can be computed from that of $I$.

  \item (\cref{t:polar}) If $\P$ denotes the polarization operation, then $\bm\in C_{I}(\bn)  \iff
\mathcal{P}(\bm) \in C_{\mathcal{P}(I)}(\mathcal{P}(\bn)).$

  \item (\cref{c:scm}) If $I$ is sequentially Cohen-Macaulay and $\bm\in C_I(\bn)$, then so is $I+(\bm)$.

 \end{enumerate}  
  \end{theorem}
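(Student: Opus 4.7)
The plan is to combine the Betti-number preservation results of parts (1)--(5) of \cref{t:intro} with the standard homological characterizations of sequential Cohen-Macaulayness for monomial ideals, and to reduce to the squarefree case via polarization.

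First, I would reduce to the squarefree case using part (6). Polarization preserves sequential Cohen-Macaulayness (a classical theorem due to Herzog-Reiner-Welker, and Sbarra), and part (6) tells us that $\bm \in C_I(\bn) \iff \P(\bm) \in C_{\P(I)}(\P(\bn))$, with $\P(I+(\bm)) = \P(I) + (\P(\bm))$ after the standard identification. So it is enough to prove the statement for squarefree $I$.

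Next, I would invoke the characterization due to Herzog-Sbarra: a squarefree monomial ideal $J$ is sequentially Cohen-Macaulay if and only if its Alexander dual $J^\vee$ has componentwise linear resolution, i.e.\ for every $d$, the ideal $(J^\vee)_{\langle d \rangle}$ generated by the degree-$d$ elements of $J^\vee$ has linear resolution. Equivalently, I could use Duval's topological criterion that each pure skeleton of the associated Stanley-Reisner complex is Cohen-Macaulay, and use the preservation of depth/regularity/projective dimension in part (5) applied to each skeleton.

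Taking the Alexander-dual route, I would then exploit Terai's duality formula relating the multigraded Betti numbers of $S/J$ and $J^\vee$. Part (3) gives $\beta_{i,\sigma}(S/I)=\beta_{i,\sigma}(S/(I+(\bm)))$ for all $i\geq 2$ and all multidegrees $\sigma$, with $\beta_0,\beta_1$ differing by at most $1$. Under Terai's formula, this translates to an equality of multigraded Betti numbers of $I^\vee$ and $(I+(\bm))^\vee$ in all but the top homological spot. Since the failure of componentwise linearity for a squarefree ideal is detected precisely by the existence of nonlinear syzygies in some degree strand, and since those strands are controlled by the multigraded Betti numbers that are preserved, the componentwise linearity of $I^\vee$ should propagate to $(I+(\bm))^\vee$.

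The main obstacle is the last step: identifying which linear strand of $(I+(\bm))^\vee$ accommodates the at-most-one new generator or relation, and verifying it extends with a linear resolution. This is where the precise structure of $C_I(\bn)$ (rather than merely the Betti-number count) is needed: the combinatorial description of $\bm$ pins down the multidegree at which the change occurs, so one can show the new generator of the dual appears in an already existing linear strand and does not break linearity. A conceivable alternative route, avoiding duality entirely, is to argue via the dimension filtration of $S/I$ directly: show that $C_I(\bn)$ interacts cleanly with the unmixed primary decomposition so that each unmixed component of $I+(\bm)$ is obtained from the corresponding unmixed component of $I$ by either no change or a parallel $C$-expansion, and then conclude Cohen-Macaulayness of each component from part (5) via the Auslander-Buchsbaum formula.
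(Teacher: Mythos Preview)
Your reduction to the squarefree case via polarization is exactly how the paper proceeds (deriving \cref{c:scm} from \cref{t:scm}), and the paper, like one of your two options, adopts Duval's criterion together with Reisner's link test. But neither of your proposed completions goes through as written. In the Alexander-dual route there is no Betti-to-Betti transfer of the kind you invoke: Terai's theorem gives $\reg(I^\vee)=\pd(S/I)$, and Hochster-type formulas express $\beta_{i,\sigma}(I^\vee)$ via link homology of the Stanley--Reisner complex of $I$, not via $\beta_{i,\sigma}(S/I)$. So the equalities from part~(3) do not translate into control of the linear strands of $(I+(\bm))^\vee$. You correctly flag this as ``the main obstacle,'' and the patch you sketch---that the one new dual generator lands in an existing linear strand---remains an unproved assertion requiring exactly the kind of structural analysis you are trying to avoid.

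The Duval route fails for a different reason: applying part~(5) ``to each skeleton'' presumes that the Stanley--Reisner ideal of the pure $i$-skeleton of $\Omega$ is itself a $C$-expansion of that of $\Gamma$, and it is not. The paper computes $\Gamma^{[i]}=\Omega^{[i]}\cup\big(\langle\tau_\bm\rangle*\Sigma^{[d]}\big)$ for an explicit join complex $\Sigma$, which is not the Stanley--Reisner picture of adding a single monomial generator, so parts~(3)--(5) do not apply to the skeleton ideals. What the paper actually does is verify Reisner's vanishing for each $\Omega^{[i]}$ directly: it decomposes $\lk_{\Gamma^{[i]}}(\sigma)$ as $\lk_{\Omega^{[i]}}(\sigma)$ union a cone $\langle\tau_\bm\setminus\sigma_1\rangle*\lk_{\Sigma^{[d]}}(\sigma_2)$, runs Mayer--Vietoris, and shows the intersection term has no homology below the top degree by exploiting the join structure of $\Sigma$. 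That direct topological computation is the ingredient missing from your outline.
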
  

As a consequence of \cref{t:intro}~(3) we are able to build, starting from a monomial ideal $I$, infinitely many monomial ideals who share almost the same free resolution. 
 
\begin{theorem}[See \cref{T:Betti number unchanged 2}]\label{T: thm in introduction}
 	Let $I$ be a monomial ideal in the polynomial ring $S=\kappa[x_1,\ldots,x_n]$, $n\geq 2$, over a field $\kappa$, minimally generated by monomials $\bm_1,\ldots,\bm_q$. Suppose $\bv\neq 1$ is a monomial in $S$. Then there are infinitely many monomials 
  $\bm \in S$ such that 
 	 \begin{align*}
 	 	\beta_j(S/(\bv I+(\bm))= \begin{cases}
    \beta_{j}(S/I)+1         & \text{if } j \in \{1,2\} \qand \bm\nmid \bv \bm_i, \forall i\in [q], \\
    \beta_{j}(S/I)           & \text{otherwise.}
      \end{cases}
 \end{align*}	 
 \end{theorem}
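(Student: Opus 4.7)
My strategy is to reduce the theorem directly to part~(3) of \cref{t:intro} by working with monomials of the special form $\bm = \bv \bw$, where $\bw$ ranges over an infinite subfamily of $C_I(\bn)$ for a fixed minimal generator $\bn$ of $I$. The driving identity is
$$\bv I + (\bm) = \bv I + (\bv \bw) = \bv\bigl(I + (\bw)\bigr),$$
combined with the general fact that multiplication by a nonzero monomial is Betti-number-preserving on quotients: multiplication by $\bv$ is an injective $S$-module endomorphism, so the minimal free resolution of $S/(\bv K)$ is that of $S/K$ shifted uniformly in internal grading, yielding
$$\beta_j\bigl(S/(\bv I + (\bm))\bigr) = \beta_j\bigl(S/(I + (\bw))\bigr) \qforall j \geq 0.$$

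Having absorbed the scaling by $\bv$, I then apply part~(3) of \cref{t:intro} to $I + (\bw)$: because $\bw \in C_I(\bn)$, the Betti numbers of $S/(I + (\bw))$ agree with those of $S/I$ in every homological degree $\geq 3$, handling every $j \geq 3$, while for $j \in \{1,2\}$ the two differ by at most one. To identify the exact difference, I translate the divisibility conditions back along $\bm = \bv\bw$: the relation $\bm \nmid \bv \bm_i$ is equivalent to $\bw \nmid \bm_i$, and $\bv \bm_i \nmid \bm$ is equivalent to $\bm_i \nmid \bw$. When both hold for every $i$---which can be arranged while keeping $\bw \in C_I(\bn)$, and which corresponds precisely to the first case of the theorem---the monomial $\bw$ is a genuinely new minimal generator of $I + (\bw)$ and no existing $\bm_i$ is removed, so $\beta_1$ grows by exactly one by a direct generator count, and the only new minimal second-syzygy is the $(\bn,\bw)$-syzygy visible as the extra edge of the complex $\Gamma$ in \cref{f:intro} (equivalently, the unique new multigraded contribution in the $\lcm(\bn,\bw)$-degree), forcing $\beta_2$ to also increase by exactly one. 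In the complementary ``otherwise'' case, $\bw$ is either redundant in $I + (\bw)$ or merely substitutes a generator via the $\Omega$-picture of part~(2), so $\beta_1$ and $\beta_2$ remain equal to those of $S/I$.

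Infinitely many such $\bm$ arise because $C_I(\bn)$ is infinite: since $n \geq 2$, scaling a fixed element of $C_I(\bn)$ by arbitrary powers of a variable outside the support of $\bn$ produces an infinite family inside $C_I(\bn)$, and both the new-generator case and its complement contain infinitely many such $\bw$. The principal obstacle I anticipate is tightening the ``at most one'' conclusion of part~(3) to exact equalities in $\beta_1$ and $\beta_2$ in the new-generator case; this should follow either from part~(1) together with an explicit face count in the supporting complex $\Gamma$, or from a direct inspection of the multigraded Betti contributions at the $\lcm(\bn,\bw)$-multidegree, isolating the single new minimal first-syzygy introduced by $\bw$.
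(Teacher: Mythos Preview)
Your reduction $\bv I+(\bm)=\bv(I+(\bw))$ with $\bm=\bv\bw$ and $\bw\in C_I(\bn)$ is clean, but it carries a hidden hypothesis that breaks the argument: you need $C_I(\bn)\neq\emptyset$, and by \cref{l:sizeofC_I} this forces $\gcd(I)\neq 1$. The theorem, however, places no restriction on $\gcd(I)$; in fact the entire purpose of multiplying by $\bv\neq 1$ is to handle ideals with $\gcd(I)=1$. Take for instance $I=(x_1,x_2)$: here $\gcd(I)=1$, so $C_I(x_1)=C_I(x_2)=\emptyset$ and your construction yields no monomials $\bw$ at all, hence no $\bm$. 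Yet with $\bv=x_1$ the theorem still promises infinitely many suitable $\bm$, and indeed $\bm=x_2^k\in C_{\bv I}(x_1x_2)$ works for all $k\geq 1$; note these $\bm$ are not divisible by $\bv$, so they cannot arise from your ansatz $\bm=\bv\bw$ in any case.

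The paper avoids this by never descending to $C_I(\bn)$: it picks $\bm$ directly from $C_{\bv I}(\bv\bn)$, which is nonempty and infinite precisely because $\gcd(\bv I)=\bv\gcd(I)\neq 1$, then applies \cref{T:Betti number unchange} to $\bv I$ and uses \cref{L:Betti numbers on multiplication} only to identify $\beta_j(S/\bv I)$ with $\beta_j(S/I)$. When $\gcd(I)\neq 1$ your route and the paper's are both valid (yours produces the subfamily $\bv\,C_I(\bn)\subseteq C_{\bv I}(\bv\bn)$), but to cover the general statement you must work in $C_{\bv I}(\bv\bn)$ from the start rather than in $C_I(\bn)$. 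A minor additional point: your argument that the ``complement'' case contains infinitely many $\bw$ is false, since $\bw\mid\bn$ allows only finitely many $\bw$; this does not affect the theorem, which only asks for infinitely many $\bm$ overall, but it is worth correcting.
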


In \cref{S: expansion of a labeled simplicial complex}, we present an explicit (and not complex) constructive method to build such monomials $\bm$. Since \cref{T: thm in introduction} works for any monomial ideal in $S$, we can obtain an infinite collection of monomial ideals with predictable betti numbers by applying \cref{T: thm in introduction} repeatedly.
For $I=(a^3b,a^2b^2c,b^4c^2,ac^3)$, the betti numbers of $S/I$ is $(1,4,5,2)$, and the table below demonstrates samples of expansions of $I$ obtained using this method.

\smallskip

\begin{center}
\begin{tabular}[c]{| c | c | c | c |}
 \hline
 (1,4,5,2) & (1,5,6,2) & (1,6,7,2) & (1,7,8,2)\\
 \hline
&&&\\
$(a^3b,a^2b^2c,b^4c^2,ac^3)$ & $(a^4b,a^3b^2c,ab^4c^2,a^2c^3, b^4c^3)$ & $(a^4b^2,a^3b^3c,ab^5c^2,a^2bc^3, b^5c^3,a^6c^3)$& $\cdots$ \\
$(a^4b,a^3b^2c,b^4c^2,a^2c^3)$  & $(a^4bc,a^3b^2c^2,b^4c^3,a^2c^4, a^4b^3)$  & $(a^4b^2c,a^3b^3c^2,b^5c^3,a^2bc^4, a^4b^4,a^3c^5)$ & $\cdots$\\
$(a^3b,a^2b^2c^2,b^4c^3,ac^4)$  & $(a^4b^2,a^3b^3c^2,ab^5c^3,a^2bc^4, a^2c^5)$  & $(a^4b^2,a^3b^3c^2,ab^5c^3,a^2bc^4, a^2c^5,b^6c^4)$ & $\cdots$\\
$(a^3b^2,a^2b^3c,b^5c^2,ac^3)$  &  $(a^4b^2c,a^3b^3c^2,ab^5c^3,a^2c^4, b^5c^3)$  & $(a^4b^2c,a^3b^3c^2,ab^5c^3,a^2c^4, b^5c^3, a^6b^2)$ &$\cdots$ \\
$\vdots$ & $\vdots$ & $\vdots$ & $\vdots$ \\
\hline
\end{tabular} 
\end{center}

\smallskip
	
	Since only $\beta_1$ and $\beta_2$ may increase by one by \cref{T:Betti number unchanged 2}, the betti numbers in this collection will always be in the form of $(1,4+x,5+x,2)$, and the length will not change. In other words, we observe that \cref{T:Betti number unchanged 2} preserves the projective dimension in this example. 
    
We should point out that the idea of studying resolutions of monomial ideals under simplicial collapses of faces of the Stanley-Reisner complex is at the center of Bigdeli and Faridi's paper~\cite{BF},  leading also to new monomials that can be added to a square-free monomial ideal while preserving the betti numbers.

An interesting and powerful application of our construction -- which corresponds topologically to gluing two simplicial complexes along faces -- is forcing homological invariants such as regularity to happen in predetermined ranges of homological degrees (see also Ullery~\cite{U}). This application is in our forthcoming paper~\cite{FL}.

The paper is organized as follows. \cref{s:collapse} introduces simplicial complexes and elementary collapses. \cref{s:resolutions} introduces multigraded free resolutions for monomial ideals. In \cref{S: expansion of a labeled simplicial complex} we start our work to describe monomials $\bm$ that can be added to the generating set of a monomial ideal $I$, so that by adding one edge, a simplicial complex supporting a (minimal) free resolution of $I$ is transformed into a simplicial complex supporting a (minimal) free resolution of $I + (\bm)$. \cref{s:betti} is devoted to a careful analysis of the multigraded betti numbers of expansion ideals, and in \cref{s:algorithm} we show how the results of the earlier sections can be used to create infinitely many monomial ideals with similar betti tables. The regularity and projective dimension of expansion ideals are studied in \cref{s:last section}. In \cref{s:pol} we show that polarization and expansion commute, and use this result to reduce the study of expansions to square-free monomial ideals. Then, in \cref{s:SCM}, we apply Reisner's criterion for Cohen-Macaulay Stanley-Reisner ideals to show that if an ideal is sequentially Cohen-Macaulay, then so are all of its expansions.

\subsubsection*{Acknowledgements} The authors are grateful to  Hal Schenck for pointing them to Ullery's work~\cite{U}.

\section{Simplicial complexes and elementary collapses}\label{s:collapse}

\begin{definition}\label{d:simplicial complex}
Let $V$ be a set. The simplicial complex $\Delta$ over $V$ is a set of
subsets of $V$, such that if $F \in \Delta$ and $G\subseteq F$, then
$G \in \Delta$. An element of $\Delta$ is called a {\bf face} of
$\Delta$. A maximal face of $\Delta$ is called a {\bf facet} of
$\Delta$. If a face is contained in exactly one facet, then
we call it a {\bf free face}. The {\bf dimension} of the face $F$ is
the number of vertices of $F$ minus one, i.e. $\dim(F)=|F|-1$. The dimension of $\Delta$ is the
dimension of its largest face. The
zero dimensional faces of $\Delta$ (i.e. elements of $V$) are called
the {\bf vertices} of $\Delta$. If $V$ is an empty set, then the dimension of $\Delta$ is $-1$. A {\bf simplex} is a simplicial complex
with exactly one facet. If the simplex has $q+1$ vertices, then
it is a {\bf $q$-simplex}.
\end{definition}

Any simplicial complex can be
considered as a collection of simplices (its facets) and their faces.

Now, we can define the notion of a \say{collapse}, which is a way to remove  faces of a simplicial complex while preserving its homotopy type.

\begin{definition} [\cite{K}, Definition 6.13] \label{d:collapse}
Let $\Delta$ be a simplicial complex, $\tau$ be a facet of $\Delta$, and
$\sigma \subsetneq \tau$ be a free face of $\Delta$. A {\bf collapse} of
$\Delta$ along $\sigma$ is the simplicial complex $$ \Delta_{\searrow
  \sigma} = \Delta \ssm \{\gamma \st \sigma \subseteq \gamma\} =
\{F\in \Delta \st \sigma \not \subseteq F \}.$$ If $\dim(\tau) =
\dim(\sigma)+1$, then the collapse is called an {\bf elementary
  collapse}.
\end{definition}

If a simplicial complex $\Gamma$ can be reduced to $\Delta$ with a
sequence of (elementary) collapses, we say $\Gamma$ \say{collapses to} 
$\Delta$.

\begin{proposition}[\cite{K}, Proposition 6.14]\label{p:homotopic}
  If a simplicial complex $\Gamma$ collapses to a simplicial complex
  $\Delta$, then $\Delta$ and $\Gamma$ are homotopy equivalent. In particular, $\rH_i(\Delta)=\rH_i(\Gamma)$ for all $i\geq -1$ where $\rH_i$ is the $i$-th reduced homology group.
\end{proposition}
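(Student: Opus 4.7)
The plan is to reduce the claim to a single elementary collapse and then exhibit an explicit strong deformation retraction. Since $\Gamma$ collapses to $\Delta$ through finitely many collapses and homotopy equivalence is transitive, it suffices to treat one elementary collapse: $\Delta' := \Delta_{\searrow \sigma}$ where $\sigma \subsetneq \tau$ is a free face of $\Delta$ with $\dim\tau = \dim\sigma + 1$. Writing $\tau = \sigma \cup \{v\}$, the freeness of $\sigma$ together with the dimension hypothesis forces the removed set of faces to be exactly $\{\sigma,\tau\}$, since any $\gamma$ with $\sigma \subsetneq \gamma \subseteq \tau$ must equal $\tau$.

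Next, I would construct the retraction geometrically. We have $|\Delta| = |\Delta'| \cup |\tau|$, and the overlap $|\Delta'| \cap |\tau|$ equals $|\partial\tau| \setminus \mathrm{relint}|\sigma|$, i.e.\ the union of the proper faces of $\tau$ that do not contain $\sigma$. The pair $(|\tau|,\, |\partial\tau| \setminus \mathrm{relint}|\sigma|)$ is homeomorphic to a closed ball together with a closed hemisphere of its boundary; such a pair admits the standard strong deformation retraction obtained by pushing radially from the relative interior of $\sigma$ across the barycenter of $\tau$ toward $v$. Extending this retraction by the identity on $|\Delta'|$ gives a strong deformation retraction $|\Delta| \to |\Delta'|$, so $\Delta$ and $\Delta'$ are homotopy equivalent.

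The homology statement follows at once: homotopy equivalences induce isomorphisms on reduced singular homology, and for a simplicial complex the reduced simplicial homology agrees with the reduced singular homology of its geometric realization. The case $i = -1$ reduces to preservation of non-emptiness, which holds as long as the collapse leaves at least one vertex; this is automatic once $\dim\Delta \geq 0$ and the claim is vacuous otherwise.

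The main obstacle is verifying continuity and boundary compatibility of the retraction on $|\tau|$. Done by hand this is a routine barycentric-coordinate calculation, but one can sidestep it entirely by appealing to the general CW-theoretic fact that if a space $Y$ is obtained from $X$ by attaching a cell along a map whose image admits a deformation retraction onto a subcomplex of $X$, then $Y \simeq X$; applying this to the single cell $\tau$ attached to $|\Delta'|$ recovers the desired equivalence without ever writing down explicit coordinates.
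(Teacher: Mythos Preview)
The paper does not supply its own proof of this proposition; it is quoted from \cite{K} (Kozlov, Proposition~6.14) as a background fact and used without argument. Your proposal is the standard proof and is essentially what one finds in \cite{K}: reduce to a single elementary collapse, identify the removed cells as exactly $\{\sigma,\tau\}$, and exhibit a strong deformation retraction of $|\tau|$ onto the portion of $|\partial\tau|$ lying in $|\Delta'|$.

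One small point worth tightening: the paper's convention allows a general (not necessarily elementary) collapse along a free face $\sigma$ contained in a facet $\tau$ of arbitrary larger dimension, and ``$\Gamma$ collapses to $\Delta$'' means a sequence of such collapses. Your reduction jumps directly to the elementary case. This is harmless, since a non-elementary collapse along $\sigma\subsetneq\tau$ factors into a chain of elementary collapses (peel off $\tau$ together with a codimension-one face containing $\sigma$, then repeat in the remaining simplex), but you should say so explicitly. With that remark added, the argument is complete.
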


In our context, we will add a new vertex and an edge on a simplicial
complex $\Delta$ to obtain the simplicial complex $\Gamma$. Then, we
will show $\Gamma$ collapses to $\Delta$, which implies that
their homology groups are isomorphic by \cref{p:homotopic}. 

\begin{figure}[!htbp]
\centering

\tikzset{every picture/.style={line width=0.75pt}} 

\begin{tikzpicture}[x=0.75pt,y=0.75pt,yscale=-1,xscale=1]

\draw    (175,85) -- (229,145) ;
\draw [shift={(229,145)}, rotate = 61.63] [color={rgb, 255:red, 0; green, 0; blue, 0 }  ][fill={rgb, 255:red, 0; green, 0; blue, 0 }  ][line width=0.75]      (0, 0) circle [x radius= 3.35, y radius= 3.35]   ;
\draw    (175,85) -- (124,145) ;
\draw [shift={(124,145)}, rotate = 117.02] [color={rgb, 255:red, 0; green, 0; blue, 0 }  ][fill={rgb, 255:red, 0; green, 0; blue, 0 }  ][line width=0.75]      (0, 0) circle [x radius= 3.35, y radius= 3.35]   ;
\draw [shift={(175,85)}, rotate = 117.02] [color={rgb, 255:red, 0; green, 0; blue, 0 }  ][fill={rgb, 255:red, 0; green, 0; blue, 0 }  ][line width=0.75]      (0, 0) circle [x radius= 3.35, y radius= 3.35]   ;
\draw    (124,145) -- (229,145) ;
\draw    (405.2,83.32) -- (458,145) ;
\draw [shift={(458,145)}, rotate = 62.56] [color={rgb, 255:red, 0; green, 0; blue, 0 }  ][fill={rgb, 255:red, 0; green, 0; blue, 0 }  ][line width=0.75]      (0, 0) circle [x radius= 3.35, y radius= 3.35]   ;
\draw    (405.2,83.32) -- (355.33,145) ;
\draw [shift={(355.33,145)}, rotate = 116.12] [color={rgb, 255:red, 0; green, 0; blue, 0 }  ][fill={rgb, 255:red, 0; green, 0; blue, 0 }  ][line width=0.75]      (0, 0) circle [x radius= 3.35, y radius= 3.35]   ;
\draw [shift={(405.2,83.32)}, rotate = 116.12] [color={rgb, 255:red, 0; green, 0; blue, 0 }  ][fill={rgb, 255:red, 0; green, 0; blue, 0 }  ][line width=0.75]      (0, 0) circle [x radius= 3.35, y radius= 3.35]   ;
\draw    (355.33,145) -- (458,145) ;

\draw    (407,85) -- (482,100) ;
\draw [shift={(482,100)}, rotate = 24.39] [color={rgb, 255:red, 0; green, 0; blue, 0 }  ][fill={rgb, 255:red, 0; green, 0; blue, 0 }  ][line width=0.75]      (0, 0) circle [x radius= 3.35, y radius= 3.35]   ;

\draw (122,126) node [anchor=north west][inner sep=0.75pt]   [align=left] {};
\draw (80,120) node [anchor=north west][inner sep=0.75pt]  [font=\Large]  {$\Delta : $};
\draw (353.26,125.15) node [anchor=north west][inner sep=0.75pt]   [align=left] {};
\draw (300,120) node [anchor=north west][inner sep=0.75pt]  [font=\Large]  {$\Gamma :$};
\draw (380,80) node [anchor=north west][inner sep=0.75pt]   [align=left] {$\displaystyle v_{2}$};
\draw (492,100) node [anchor=north west][inner sep=0.75pt]   [align=left] {$\displaystyle v_{1}$};

\end{tikzpicture}

\caption{An elementary collapse}
\label{F: CAH}
\end{figure}
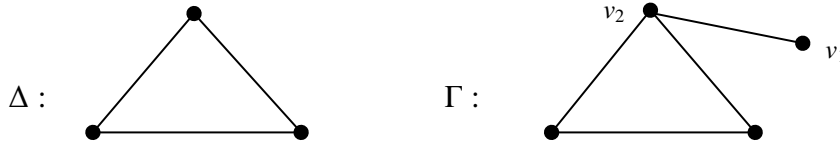 

\begin{example}
Consider $\Gamma$ from \cref{F: CAH}, face $\tau = \{\bv_1\}$ is a free face contained in the facet $\sigma = \{\bv_1,\bv_2\}$. So by \cref{d:collapse}, we can collapse $\tau$ from $\Gamma$ to obtain $\Delta = \Gamma_{\searrow \tau}$. By \cref{p:homotopic}, $\Delta$ and $\Gamma$ are homotopy equivalent, and so their homology modules are isomorphic. It is easy to check:
$$
    \rH_0(\Delta) = \rH_0(\Gamma) = 0,\quad
    \dim\rH_1(\Delta) = \dim \rH_1(\Gamma) = 1,\qand
    \rH_i(\Delta) = \rH_i(\Gamma) = 0,\ i>1.
$$    
\end{example}

\section{Free resolutions}\label{s:resolutions}

Let $S=\kappa[x_1,\ldots ,x_n]$ be a polynomial ring over a field
$\kappa$, and $$\bgx=x_1^{g_1}\cdots x_n^{g_n}$$ be a monomial in $S$. The {\bf degree} and {\bf multidegree} of $\bgx$ are
defined, respectively, as $$\deg(\bgx) = \sum\limits_{j=1}^n g_n \in \NN
\qand \mdeg(\bgx) = \bg=(g_1,\ldots ,g_n) \in \NN^n.$$ To make the
notation simpler, for a monomial $\bgx$ we use $\bgx$ and $\bg$ interchangeably. 
 A \textbf{free resolution} of an $S$-module $S/I$ is a exact sequence of free modules
$$ \FF: 0 \longrightarrow S^{c_p} \stackrel{d_p}{\longrightarrow}
 S^{c_{p-1}} \longrightarrow \cdots \longrightarrow S^{c_1}
 \stackrel{d_1}{\longrightarrow} S^{c_0} \longrightarrow 0$$ such
 that $S/I \cong S^{c_0}/\im(d_1)$. When $d_{i+1}(S^{c_{i+1}})\subseteq (x_1,\ldots,x_n)S^{c_{i}}$ for all $i\geq 0$, $\FF$ is a \textbf{minimal free resolution} of $S/I$ \cite{P}. 
The smallest possible value for the $c_i$ appear in a minimal free resolution are called \textbf{betti numbers}, which are denoted by $\beta_i(S/I)$.

In 1966, Diana Taylor \cite{T} labeled the vertices of a simplex on $q$ vertices with the generators of a monomial ideal $I=(\bm_1,\ldots,\bm_q)$, and each face of the simplex   with the least common multiple ($\lcm$) of its vertex labels. She showed in her thesis that the simplicial chain complex of this simplex can be homogenized into a free resolution (which is usually not minimal) of $I$. This resolution is known as the and named as $\textbf{Taylor resolution}$.  We consider an example.

\begin{example}\label{E: on homogenization}
Let $\Delta$ be a 2-simplex. The chain complex of $\Delta$ is \begin{equation*}
    0\longrightarrow k \xlongrightarrow{\begin{bmatrix} -1 \\ -1 \\ 1 \end{bmatrix}} k^3 \xlongrightarrow{\begin{bmatrix} 0 & -1 & -1 \\ -1 & 1 & 0\\1 & 0 & 1 \end{bmatrix}} k^3 \xlongrightarrow{\begin{bmatrix} 1 & 1 & 1 \end{bmatrix}} k \longrightarrow 0   
\end{equation*}
Let $S=\kappa[x_1,x_2,x_3,x_4]$ for some field $\kappa$. Let $I$ be the monomial ideal generated by monomials $x_1^2x_2x_4,\  x_1x_2^2x_3^2,\ x_1x_2x_3^3$. We label those generators on the vertices of $\Delta$ and label the faces with the lcm of the vertices' labels. (\cref{dia:test}).
\begin{figure}[!ht]
\centering
\tikzset{every picture/.style={line width=0.75pt}} 

\begin{tikzpicture}[x=0.75pt,y=0.75pt,yscale=-1,xscale=1]

\draw  [color={rgb, 255:red, 0; green, 0; blue, 0 }  ,draw opacity=1 ][fill={rgb, 255:red, 155; green, 155; blue, 155 }  ,fill opacity=1 ] (179,120) -- (244,217) -- (114,217) -- cycle ;

\draw (156,94) node [anchor=north west][inner sep=0.75pt]   [align=left] {$\displaystyle x_{1}^{2} x_{2} x_{4}$};
\draw (60,217) node [anchor=north west][inner sep=0.75pt]   [align=left] {$\displaystyle x_{1} x_{2}^{2} x_{3}^{2}$};
\draw (246,220) node [anchor=north west][inner sep=0.75pt]   [align=left] {$\displaystyle x_{1} x_{2} x_{3}^{3}$};
\draw (85,146) node [anchor=north west][inner sep=0.75pt]   [align=left] {$\displaystyle x_{1}^{2} x_{2}^{2}x_{3}^{2} x_{4}$};
\draw (215,146) node [anchor=north west][inner sep=0.75pt]   [align=left] {$\displaystyle x_{1}^{2} x_{2} x_{3}^{3} x_{4}$};
\draw (155,220) node [anchor=north west][inner sep=0.75pt]   [align=left] {$\displaystyle x_{1} x_{2}^{2}x_{3}^{3}$};
\draw (153,171) node [anchor=north west][inner sep=0.75pt]   [align=left] {$\displaystyle x_{1}^{2} x_{2}^{2}x_{3}^{3} x_{4}$};

\end{tikzpicture}
\caption{}
\label{dia:test}
\end{figure}

Then, we obtain the free resolution of $S/I$ from the labeled $\Delta$.
\begin{equation*}
    \FF_\Delta: 0 \longrightarrow S \xlongrightarrow{\begin{bmatrix} -x_1x_4 \\ -x_3 \\ x_2 \end{bmatrix}} S^3\xlongrightarrow{\begin{bmatrix} 0 & -x_2x_3^2 & -x_3^3 \\ -x_3 & x_1x_4 & 0\\x_2 & 0 & x_1x_4 \end{bmatrix}} S^3  \xlongrightarrow{\begin{bmatrix} x_1^2x_2x_4 & x_1x_2^2x_3^2 & x_1x_2x_3^3  \end{bmatrix}} S \longrightarrow  0   
\end{equation*}
\end{example}
Since all matrices are not containing non-zero constants, $\FF_\Delta$ is a minimal free resolution. From right to left, the first free non-zero module $\FF_{\Delta_0} = S$ is
augmented. The second free module $\FF_{\Delta_1} =
S^3$ represents the vertices of $\Delta$. The third free module $\FF_{\Delta_2} =
S^3$ is the edges of $\Delta$. The fourth free module $\FF_{\Delta_3} = S$ is the
full triangle. The number of faces equals the rank of the
corresponding free modules.

By this connection, we can write $S$ in terms of the labels of the faces. If we consider the multidegree of the monomial, we will obtain the \textbf{multigraded free resolution}:
$$\FF_\Delta: 0 \longrightarrow S(x_1^2x_2^2x_3^3x_4) \longrightarrow \begin{matrix}
    S(x_1^2x_2^2x_3^2x_4)\\ \oplus \\ S(x_1^2x_2x_3^3x_4) \\ \oplus \\ S(x_1x_2^2x_3^3) 
\end{matrix}
 \longrightarrow \begin{matrix}
     S(x_1^2x_2x_4)\\ \oplus \\S(x_1x_2^2x_3^2) \\ \oplus \\ S(x_1x_2x_3^3) 
 \end{matrix} \longrightarrow S \longrightarrow  0.$$ 
 
Recall that the rank of $S$ in a minimal free resolution is the betti numbers. Accordingly, the number of copies of $S(\bm)$ is the multigraded betti number. The $i$-th \textbf{multigraded betti number} of $S/I$ is defined as \begin{equation*}
    \beta_{i,\bm}(S/I) = \text{ number of copies of } S(\bm) \text{ in } i\text{-th homological degree of } \FF,
\end{equation*}
where $\FF$ is a minimal multigraded free resolution.

Similarly, if considering the degree of monomials, we obtain the \textbf{graded free resolution}:
$$\FF_\Delta: 0 \longrightarrow S(-8) \longrightarrow \begin{matrix}
    S(-7)\\ \oplus \\ S(-7) \\ \oplus \\ S(-6) 
\end{matrix}
 \longrightarrow \begin{matrix}
     S(-4)\\ \oplus \\S(-5 )\\ \oplus \\ S(-5) 
 \end{matrix} \longrightarrow S \longrightarrow  0.$$
And the $i$-th \textbf{graded betti number} is defined as \begin{equation*}
	\beta_{i,j}(S/I) = \text{ number of copies of } S(-j) \text{ in } i\text{-th homological degree of } \FF,
\end{equation*}

  For an monomial ideal $I$, where $\mingens(I)= \{\m_1,\ldots ,
  \m_q\}$, the {\bf lcm lattice}~\cite{GPW} of $I$
  \begin{equation}\label{e:lcm-lattice}
    \LL_I = \{\lcm(\m_{j_1},\ldots ,\m_{j_r}) \st 1\leq j_1<\cdots <j_r\leq q\}
  \end{equation}
is an atomic lattice ordered by divisibility. The least element
$\hat{0}$ = 1 = $\lcm(\emptyset) \in \LL_I$, and the atoms are
$\m_1,\ldots ,\m_q$.

The sum of all $i$-th multigraded betti numbers equals to the $i$-th
betti number. Since all the faces are the $\lcm$ of the labels of some
vertices, we only need to consider the monomials in the lcm
  lattice of $I$
\begin{equation}\label{E:Betti}
    \beta_{i}(S/I) =  \sum_{j\geq 1} \beta_{i,j}(S/I)= \sum_{\m \in \LL_I} \beta_{i,\bm}(S/I).
\end{equation}

The \textbf{projective dimension} of  $S/I$, is  the length of a minimal free resolution
$$\pd(S/I)=\max\{i\mid \beta_i(S/I) \neq 0\}$$
and the \textbf{regularity} of $S/I$ can be thought of as the \say{width} of the minimal free resolution (in the sense of degree)  
	$$\reg(S/I)= \max\{j-i\mid \beta_{i,j}(S/I) \neq 0\}.$$

Bayer and Sturmfels \cite{BS} (see also~\cite{BPS}) generalized the
idea of constructing a free resolution by labeling a $q$-simplex
(Taylor complex) to a labeled simplicial complex. If the simplicial
complex $\Delta$ labeled by $I$ can be ``homogenized" into a free
resolution of $S/I$, then we say that \textbf{$\Delta$ supports a free
  resolution of $S/I$}. If the polynomial ring $S$ is clear, we say
$\Delta$ supports a free resolution of $I$. Before stating the
proposition, we need the following definitions.

\begin{definition} \label{D:subcomplex}
Let $I$ be a monomial ideal and $\bu\in I$ be arbitrary. $\Delta$ is some simplicial complex whose vertices are labeled with the generators of $I$. Then 
\begin{align*} 
\Delta_{\leq \bu} &= \{\sigma \in \Delta \st \lcm (\sigma)\mid \bu\} \nonumber \\ 
\Delta_{< \bu} &= \{\sigma \in \Delta \st \lcm (\sigma) \mid \bu \text{ and } \lcm(\sigma) \neq \bu \} 
\end{align*}
\end{definition}

 \begin{proposition}[\cite{BPS}, {\cite[Proposition 57.5]{P}}]\label{p:supresolution}
 Let $I$ be a monomial ideal and $\Delta$ be a simplicial complex labeled by $I$. Then $\Delta$ supports a free resolution of $I$ if and only if $\Delta_{\leq \bu}$ is acyclic over $\kappa$ for all $1\neq \bu\in \LL_I$. 
\end{proposition}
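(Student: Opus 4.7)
The plan is to associate a candidate resolution $\FF_\Delta$ to the labeled complex $\Delta$ and reduce its exactness to a multigraded statement about the subcomplexes $\Delta_{\leq \bu}$. Explicitly, I would let $\FF_\Delta$ be the complex of $\NN^n$-graded free $S$-modules whose $i$-th term (for $i\geq 1$) is $\bigoplus_{\sigma \in \Delta,\ \dim \sigma = i-1} S(-\lcm(\sigma))$, with $\FF_{\Delta,0}=S$, and whose differential homogenizes the simplicial boundary: each incidence sign $\varepsilon(\tau,\sigma)\in\{\pm 1\}$ between faces $\tau \subsetneq \sigma$ of consecutive dimension is scaled by the monomial $\lcm(\sigma)/\lcm(\tau)\in S$. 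A direct check shows $H_0(\FF_\Delta) = S/I$, because $\im(d_1)$ is generated by the vertex labels $\bm_1,\ldots,\bm_q$, so $\FF_\Delta$ is a free resolution of $S/I$ exactly when it is acyclic in positive homological degrees.

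Since every map in $\FF_\Delta$ is $\NN^n$-graded, exactness is equivalent to exactness of every multigraded strand $(\FF_\Delta)_\bu$, $\bu \in \NN^n$. The central observation to establish is that this strand has a $\kappa$-basis, in homological degree $i$, consisting of the faces $\sigma$ with $\dim \sigma = i-1$ and $\lcm(\sigma)\mid \bu$—that is, the faces of $\Delta_{\leq \bu}$ of dimension $i-1$—and the induced differential agrees, up to a global sign, with the augmented simplicial boundary of $\Delta_{\leq \bu}$. Indeed, the homogenizing factor $\lcm(\sigma)/\lcm(\tau)$ contributes to the $\bu$-strand only when $\lcm(\tau)\mid \bu$, in which case it acts as the $\kappa$-identity on the corresponding one-dimensional piece. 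Consequently, $H_i((\FF_\Delta)_\bu) \cong \rH_{i-1}(\Delta_{\leq \bu};\kappa)$ for every $i\geq 1$.

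To finish, I would restrict attention to the multidegrees that matter. If $\bu \notin \LL_I$, then taking $\bu^\ast = \lcm\{\bv \in \LL_I \st \bv \mid \bu\}$ (with $\bu^\ast = 1$ when the set is empty) yields $\bu^\ast \in \LL_I$ and $\Delta_{\leq \bu}= \Delta_{\leq \bu^\ast}$, so only multidegrees in $\LL_I$ give genuine constraints. For $\bu = 1$, since $1\notin\mingens(I)$, the complex $\Delta_{\leq 1}$ contains only the empty face, so $\rH_{i-1}(\Delta_{\leq 1};\kappa)=0$ for all $i\geq 1$ automatically. Combining these, $\FF_\Delta$ resolves $S/I$ if and only if $\rH_{i}(\Delta_{\leq \bu};\kappa)=0$ for every $i\geq 0$ and every $1\neq \bu\in\LL_I$, which is exactly the stated acyclicity condition.

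The main obstacle I expect is the identification of the $\bu$-strand of $\FF_\Delta$ with the augmented simplicial chain complex of $\Delta_{\leq \bu}$, since one must track signs and verify that the homogenizing monomials become scalars in a fixed multidegree. I would handle this by fixing an orientation on the vertex set of $\Delta$, writing both differentials explicitly on the resulting ordered-face basis, and checking that they agree up to a global sign—which is irrelevant for homology. Once this identification is secured, the rest of the argument is straightforward bookkeeping in the lcm lattice.
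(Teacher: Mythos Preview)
The paper does not prove this proposition; it is quoted from \cite{BPS} and \cite[Proposition~57.5]{P} and used as a black box. Your argument is correct and is essentially the standard proof found in those references: homogenize the simplicial chain complex, decompose by multidegree, identify each strand with the augmented chain complex of $\Delta_{\leq \bu}$, and then reduce to $\bu\in\LL_I$ using that $\LL_I$ is closed under $\lcm$. There is nothing to compare against in the paper itself; your sketch matches the cited sources and would be acceptable as a self-contained proof.
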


 This proposition gives us a convenient way to check whether a labeled simplicial complex supports a free resolution of $I$. We need the following proposition to check the minimality of the simplicial resolution. 

 \begin{proposition}[{\cite[Remark 1.4]{BS}}]\label{p:minimality}
     Let $I$ be a monomial ideal and $\Delta$ be a simplicial complex that supports a free resolution of $I$. The resolution is minimal if and only if the label of $\sigma$ and the label of $\tau$ are distinct for any $\sigma\subsetneq \tau \in \Delta$. 
 \end{proposition}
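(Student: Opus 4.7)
The plan is to unpack the definition of the simplicial resolution $\FF_\Delta$ and reduce minimality to a statement about entries of its differential. Recall that $\FF_\Delta$ is obtained by homogenizing the simplicial chain complex of $\Delta$: for each face $\tau\in\Delta$ one takes a free summand $S(-\mdeg(\lcm(\tau)))$, and the differential sends the basis element $[\tau]$ to the signed sum
$$ \partial[\tau] = \sum_{\substack{\sigma\subsetneq\tau \\ |\sigma|=|\tau|-1}} \pm \frac{\lcm(\tau)}{\lcm(\sigma)}\,[\sigma]. $$
A multigraded free resolution of $S/I$ is minimal if and only if every entry of every differential matrix lies in the maximal ideal $\mathfrak{m}=(x_1,\ldots,x_n)$, which for our differentials amounts to the condition that each ratio $\lcm(\tau)/\lcm(\sigma)$ that appears is a nonconstant monomial.

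For the ($\Leftarrow$) direction, I would argue directly: if $\lcm(\sigma)\neq\lcm(\tau)$ for all strict inclusions $\sigma\subsetneq\tau$ in $\Delta$, then in particular for every codimension-one pair $(\sigma,\tau)$ the quotient $\lcm(\tau)/\lcm(\sigma)$ is a monomial strictly divisible by some $x_i$. Since the only nonzero entries of the differentials of $\FF_\Delta$ are precisely these quotients (with signs from the simplicial orientation), every entry lies in $\mathfrak{m}$, so $\FF_\Delta$ is minimal.

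For the ($\Rightarrow$) direction, I would prove the contrapositive. Suppose there exist $\sigma\subsetneq\tau$ in $\Delta$ with $\lcm(\sigma)=\lcm(\tau)$. Choose a chain of faces
$$ \sigma=\sigma_0\subsetneq\sigma_1\subsetneq\cdots\subsetneq\sigma_k=\tau, \qquad |\sigma_{i}|=|\sigma_{i-1}|+1, $$
which exists because $\Delta$ is closed under taking subsets. Monotonicity of $\lcm$ under inclusion gives $\lcm(\sigma_0)\mid\lcm(\sigma_1)\mid\cdots\mid\lcm(\sigma_k)$, and the outer terms coincide, so some consecutive pair satisfies $\lcm(\sigma_{i-1})=\lcm(\sigma_i)$. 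The corresponding entry of the differential between these basis elements equals $\pm 1$, which is a unit, so $\FF_\Delta$ is not minimal.

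I do not expect any serious obstacle; the one point that deserves care is the elementary observation that equality of labels on a non-adjacent pair forces equality on some codimension-one pair, which is what lets us translate the global hypothesis on all $\sigma\subsetneq\tau$ into the local condition on differential entries. Everything else is formal from the definition of the homogenized simplicial chain complex together with the standard characterization of minimal free resolutions via entries lying in~$\mathfrak{m}$.
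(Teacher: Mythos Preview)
Your argument is correct. The paper does not actually prove this proposition; it is stated as a citation from \cite[Remark~1.4]{BS} and used as a black box, so there is no proof in the paper to compare against. Your proof is the standard one: you correctly identify that the nonzero entries of the differential in $\FF_\Delta$ are exactly the monomials $\lcm(\tau)/\lcm(\sigma)$ for codimension-one pairs $\sigma\subsetneq\tau$, and that minimality is equivalent to none of these being units. The chain argument in the $(\Rightarrow)$ direction, reducing an arbitrary strict inclusion with equal labels to a codimension-one such pair via monotonicity of $\lcm$ under inclusion, is exactly the right observation (and in fact the divisibility chain with equal endpoints forces \emph{all} intermediate labels to coincide, though you only need one consecutive equality).
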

 
The dimensions of the homology groups of the subcomplexes are also related to the betti numbers. The following proposition is the main technique for us to compute the betti numbers in this paper.

\begin{proposition}[\cite{BS}]\label{p:Bayer and Sturmfels}
 Let $\Delta$ be a simplicial complex labeled by the monomial generators of an ideal, $I$. Suppose $\Delta$ supports a free resolution of $I$.
The multigraded betti numbers of $S/I$ are
\begin{equation*}
\beta_{i,\bm}(S/I)=\begin{cases}
    \dim\rH_{i-2}  (\Delta_{<\bm}; \kappa) & \text{if $\Delta_{\leq \bm} \neq \emptyset$}\\
    0 & \text{otherwise},
  \end{cases}    
\end{equation*}
for $i\geq 1$ and monomial $\m$.
\end{proposition}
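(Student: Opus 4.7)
The plan is to use the free resolution $\FF_\Delta$ (provided by the hypothesis) to compute $\beta_{i,\bm}(S/I) = \dim_\kappa \mathrm{Tor}_i^S(S/I, \kappa)_\bm$ directly as $\dim_\kappa H_i(\FF_\Delta \otimes_S \kappa)_\bm$; any free resolution works, so minimality is not needed. First I would unpack the $\bm$-graded piece of $\FF_\Delta \otimes \kappa$ in homological degree $i \geq 1$: the basis of $(\FF_\Delta)_i$ consists of generators $e_\sigma$ indexed by $(i-1)$-faces $\sigma$ of $\Delta$, with $e_\sigma$ of multidegree $\mdeg(\lcm(\sigma))$. Since $\kappa = S/(x_1,\ldots,x_n)$ annihilates every positive-degree monomial, the surviving basis of the $\bm$-strand is indexed precisely by those $\sigma$ with $\lcm(\sigma) = \bm$. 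The boundary $d_i(e_\sigma) = \sum_{\tau} \pm (\lcm(\sigma)/\lcm(\tau))\, e_\tau$ (over codimension-one subfaces $\tau \subsetneq \sigma$) reduces to the signed sum over those $\tau$ with $\lcm(\tau) = \lcm(\sigma) = \bm$, since any other ratio lies in the maximal ideal.

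Second, I would recognize the resulting complex $C_\bullet$ as (a shift of) the quotient $Q_\bullet := \tilde{C}_\bullet(\Delta_{\leq\bm};\kappa)/\tilde{C}_\bullet(\Delta_{<\bm};\kappa)$ of reduced simplicial chain complexes: the faces of $\Delta_{\leq\bm}$ not in $\Delta_{<\bm}$ are exactly those with label equal to $\bm$, and the induced quotient differential matches the one derived above. Since a dimension-$(i-1)$ face sits in homological degree $i$ of $\FF_\Delta$, one obtains $H_i(\FF_\Delta \otimes \kappa)_\bm = H_{i-1}(Q_\bullet)$. Third, I would apply \cref{p:supresolution}, which yields $\rH_*(\Delta_{\leq\bm};\kappa) = 0$ for every $\bm \in \LL_I \setminus \{1\}$ (and in particular whenever the $\bm$-strand is nonzero for $i\geq 1$). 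The short exact sequence
$$0 \to \tilde{C}_\bullet(\Delta_{<\bm}) \to \tilde{C}_\bullet(\Delta_{\leq\bm}) \to Q_\bullet \to 0$$
then produces a long exact sequence whose middle term vanishes, so the connecting map is an isomorphism $H_j(Q_\bullet) \cong \rH_{j-1}(\Delta_{<\bm};\kappa)$; setting $j = i-1$ delivers the stated formula $\beta_{i,\bm}(S/I) = \dim_\kappa \rH_{i-2}(\Delta_{<\bm};\kappa)$.

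For the vanishing case, if $\Delta_{\leq\bm}$ contains no vertex, equivalently no generator of $I$ divides $\bm$, then no face of $\Delta$ has $\lcm$ equal to $\bm$, the complex $C_\bullet$ is identically zero, and all multigraded betti numbers in multidegree $\bm$ vanish. The main obstacle is bookkeeping rather than ideas: tracking the one-place index shift between homological degree in $\FF_\Delta$ and simplicial dimension of faces; keeping reduced versus unreduced chain complexes straight so that the augmentation in $\FF_\Delta$ corresponds to the empty face and the formula $\rH_{i-2}$ has the correct grading; and verifying that the acyclicity guaranteed by \cref{p:supresolution} covers exactly those $\bm \in \LL_I \setminus \{1\}$ for which $Q_\bullet$ has nontrivial chains, so that the connecting isomorphism is legitimate wherever we need it.
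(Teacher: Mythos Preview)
Your proof is correct and follows the standard argument: compute $\mathrm{Tor}$ from the cellular resolution, identify the multidegree-$\bm$ strand of $\FF_\Delta \otimes \kappa$ with the relative chain complex of the pair $(\Delta_{\leq\bm},\Delta_{<\bm})$, and invoke acyclicity of $\Delta_{\leq\bm}$ to turn the long exact sequence into the desired isomorphism. The index bookkeeping and the treatment of the case $\Delta_{\leq\bm}=\emptyset$ are handled correctly.

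There is nothing to compare against in this paper: \cref{p:Bayer and Sturmfels} is quoted from \cite{BS} and stated without proof. Your argument is essentially the one given in the original source (and in standard references such as \cite{P}), so you have supplied exactly what the paper omits.
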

In most cases, there are a large number of distinct monomials in $I$. We do not expect to check $\Delta_{<\bm}$ for all $\m\in I$. Instead, it is only necessary to check all $1\neq \m \in \LL_I$, as from the same formula, it is easy to see that $\beta_{i,\bm}(S/I)=0$ for all $\m \notin \LL_I$.

\section{Expansion of a labeled simplicial complex} \label{S: expansion of a labeled simplicial complex}

In this section, we will introduce a method to add a vertex to a labeled
simplicial complex, $\Delta$, which supports a (minimal) free resolution of $I$, so that  the new simplicial complex will support a free resolution of $I+(\m)$ for some monomial $\m\notin I$. Our main goal will be to show how one can construct this monomial $\m$. 

For the rest of this section, we assume that $S=\kappa[x_1,\ldots,x_n]$ is a polynomial ring over a field $\kappa$. 

\begin{definition}[\bf{Our basic notation}] \label{d:Cax}
For $\bg =(g_1,\ldots,g_n), \bc=(c_1,\ldots,c_n) \in \NN^n$.
\begin{itemize}
    \item $\bgx=x_1^{g_1}\cdots x_n^{g_n}$ is a monomial in $S$.
    \item the {\bf support of} $\bg$ (or $\bgx$) is defined as $\supp(\bgx)=\supp(\bg)=\{i\in [n]\mid g_i\neq 0\}$.

\item $\bc \geq \bg$ means $c_i \geq g_i$ for each $i \in [n]$ (equivalently $\bgx \mid \bcx$). 
\item If $\bc \geq \bg$,  we define 
$$C_\bg(\bc)= \left \{ \bd \in \NN^n, \bd\neq 0 \st  
             d_i \geq c_i \qforall i \in \supp(\bc -\bg), \          
             d_i\lneq c_i \qforsome i \notin \supp(\bc-\bg)  \right \}.  
$$ 

Equivalently we define $$C_{\bgx}(\bcx)=\{\bdx \st \bd \in C_\bg(\bc)\}.$$
\item If $I$ is a monomial ideal in $S$, we denote by $\mingens(I)$ the (unique) minimal monomial generating set of $I$.

\item If $I$ is a monomial ideal in $S$ with monomial generating set $\bm_1,\ldots,\bm_q$, then  
$$\gcd(I)=\gcd(\bm_1,\ldots,\bm_q)=\gcd( \bm \st \bm \in \mingens(I) )
         = \gcd(\bm \st \bm \mbox { monomial in } I).$$

Note that since every monomial generating set of $I$ must contain $\mingens(I)$, 
the definition of $\gcd(I)$ is independent of which generators we pick for $I$.

\item If $I$ is a monomial ideal  in the polynomial ring $S=\kappa[x_1,\ldots,x_n]$ over a field $\kappa$, and $\bn \in I$ is a monomial, then by definition $\gcd(I) \mid \bn$. We use the notation 
$$C_I(\bn)=C_{\gcd(I)}(\bn). $$
When $\gcd(I)= 1$, $C_I(\bn)= \emptyset$ by \cref{l:sizeofC_I}.
\end{itemize}
\end{definition}

\begin{lemma}\label{l:sizeofC_I}
	With notation as in \cref{d:Cax}, let $\bg =(g_1,\ldots,g_n), \bc=(c_1,\ldots,c_n)\in \NN^n$ and $I$ be a monomial ideal in $S$ and $\bn \in \mingens(I)$.
    \begin{enumerate}		
        \item $C_\bg(\bc)\neq \emptyset$ if and only if $n=1$ and $\bc\neq (1)\in \NN^1$, or $n>1$ and $c_i=g_i>0$ for at least one $i\in [n]$.

           \item $C_I(\bn)\neq \emptyset$ if and only if  $n=1$ and  $I \neq (x_1)$, or $n>1$ and $\gcd(I)\neq 1$.

        \item If $C_\bg(\bc)\neq \emptyset$ and $n\geq 2$, then $C_\bg(\bc)$ is an infinite set. 
    
        \item If $C_I(\bn)\neq \emptyset$ and $n\geq 2$, then $C_I(\bn)$ is an infinite set.
     \end{enumerate} 
 \end{lemma}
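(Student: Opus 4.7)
The plan is to deduce all four statements directly from the combinatorial definition of $C_\bg(\bc)$. Let me introduce the shorthand $A = \supp(\bc - \bg) = \{i \st c_i > g_i\}$ and $B = [n] \ssm A = \{i \st c_i = g_i\}$, so that membership $\bd \in C_\bg(\bc)$ is equivalent to the three conditions $\bd \neq 0$, $d_i \geq c_i$ for every $i \in A$, and $d_k < c_k$ for at least one $k \in B$.

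For part (1), the backward direction is constructive. If $n > 1$ and $c_{i_0} = g_{i_0} > 0$ for some $i_0$, I form $\bd \in C_\bg(\bc)$ by setting $d_i = c_i$ on $A$, $d_{i_0} = 0$, and, if necessary, placing one further positive entry at some $j \neq i_0$ (available because $n \geq 2$) to ensure $\bd \neq 0$. In the $n=1$ case the stated hypothesis forces $\bg = \bc$ with $c_1 \geq 2$, so $d_1 = 1$ works. For the forward direction, any $\bd \in C_\bg(\bc)$ supplies a witness $k \in B$ with $d_k < c_k$, and since these are nonnegative integers this forces $c_k = g_k \geq 1$, exactly the stated necessary condition. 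Part (2) then follows by specializing (1) to $\bg = \gcd(I)$, $\bc = \bn$: for $n=1$ every proper monomial ideal is $(x_1^k)$, so $\gcd(I) = \bn = x_1^k$ and $c_1 \geq 2$ becomes $I \neq (x_1)$; for $n > 1$ the coordinate condition unwinds into the ideal-level invariant $\gcd(I) \neq 1$, using that $\bn \in \mingens(I)$.

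Parts (3) and (4) leverage $n \geq 2$ to manufacture an infinite family from a single element. Given any $\bd \in C_\bg(\bc)$, fix a witness index $k_0 \in B$ with $d_{k_0} < c_{k_0}$ and pick any $j \neq k_0$. Each translate $\bd + t \be_j$ for $t \in \NN$ remains in $C_\bg(\bc)$: adding to the $j$-th coordinate only raises entries in $A$, leaves $d_{k_0}$ unchanged so the strict inequality on $B$ persists, and keeps the vector nonzero. Part (4) is the specialization of (3) to $C_I(\bn) = C_{\gcd(I)}(\bn)$.

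The main obstacle is really careful bookkeeping in part (1) and its specialization (2): handling the $n=1$ corner cleanly, confirming that the constructed $\bd$ is nonzero in all configurations of $A$ and $B$, and verifying that the coordinate condition ``$c_i = g_i > 0$ for some $i$'' translates correctly to the ideal-level condition $\gcd(I) \neq 1$ when $\bn$ ranges over $\mingens(I)$. Once these points are settled, parts (3) and (4) drop out by the explicit construction above.
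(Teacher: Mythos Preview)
Your proposal is correct and follows essentially the same approach as the paper: both argue directly from the definition of $C_\bg(\bc)$, extract a witness index in $B$ for the forward direction of (1), construct an explicit element for the converse, obtain infiniteness in (3) by letting one coordinate $j\neq i$ grow without bound, and deduce (2) and (4) by specializing to $\bg=\gcd(I)$, $\bc=\bn$. Your use of the shorthand $A,B$ and the translate family $\bd+t\be_j$ is a slightly cleaner packaging of the paper's construction, but the underlying ideas are identical.
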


\begin{proof}
	For~(1), suppose $\bd\in C_\bg(\bc).$ Then $d_i\lneq  c_i$ for some $i\notin \supp(\bc-\bg)$. Since $d_i\in \mathbb{N}$, $\supp(\bc-\bg)\subsetneq \supp(\bc)$. This implies $c_i=g_i> 0$ for at least one $i\in [n]$. Morever, when $n=1$, $\bc\neq (1)$, otherwise $\bd=(d_1)=(0)$.
	
	For the converse, we start with $c_i=g_i> 0$. Then $i\notin \supp(\bc-\bg)$. So, we can choose $d_i\lneq c_i$ since $c_i>0$. By assumption, $\bc\neq (1)$ when $n=1$, so either let $d=(d_1)$ where $d_1\geq 1$ or let $d_j> c_j$ for $j\neq i$. Then $\bd\in C_\bg(\bc)$, which settles~(1). 

    For~(2), if $\bx^\bg=\gcd(I)\neq 1$, then there exist $\bx^\bc=\bn\in \mingens{(I)}$ such that $\bc_i=\bg_i$. So~(1) leads~(2).
    
	If $\bn\geq 2$, there must be at least one $j\neq i$. Since $d_j$ has no upper bound, there are infinitely many different $\bd\in C_\bg(\bc)$. Case of $C_I(\bn)$ is similar.  This settles~(3), and equivalently~(4).

\end{proof}

\begin{lemma}\label{L: miw/v} 
Let $\bc, \bg  \in \NN^n$ be such that $\bc \geq \bg$. With notation as in \cref{d:Cax}, $\bd \in C_\bg(\bc)$ if and only if $\bd=\bc+\ba-\bb$ for some $\ba, \bb \in \NN^n$ such that
\begin{enumerate}
 \item $\bb \neq 0$;
 \item if $\ba =0 $ then  $\bb\neq \bc$;
 \item $\supp(\bb) \cap \supp(\bc -\bg)=\emptyset$;
 \item $\supp(\bb) \cap \supp(\ba) = \emptyset$.
 \end{enumerate}
In particular, for $\bb$, $\bc$ and $\bg$ as above, 
$$\max(g_i, c_i-b_i)=c_i \qforall i \in [n].$$
\end{lemma}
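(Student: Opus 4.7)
The proof naturally splits into the two implications plus the final ``in particular'' statement, and the decomposition is canonical: given $\bd$, the natural choice is $\ba = (\bd-\bc)_+$ and $\bb = (\bc-\bd)_+$, i.e.\ the componentwise positive and negative parts of $\bd-\bc$. Conditions $(3)$ and $(4)$ are then automatic, so the real content is book-keeping on the two exceptional conditions $(1)$ and $(2)$.

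For the forward direction, assume $\bd \in C_\bg(\bc)$ and define $\ba,\bb$ as above, so that $\bd=\bc+\ba-\bb$ and $\supp(\ba)\cap\supp(\bb)=\emptyset$, giving $(4)$. For $(3)$: if $i\in\supp(\bb)$ then $d_i<c_i$, while $i\in\supp(\bc-\bg)$ forces $d_i\geq c_i$ by the defining property of $C_\bg(\bc)$, so these sets are disjoint. For $(1)$: the defining condition guarantees some $i\notin\supp(\bc-\bg)$ with $d_i<c_i$, so $b_i>0$ and $\bb\neq 0$. For $(2)$: if $\ba=0$ then $\bd=\bc-\bb$; if moreover $\bb=\bc$, then $\bd=0$, contradicting $\bd\in C_\bg(\bc)$.

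For the reverse direction, assume the decomposition and verify the four membership conditions of $C_\bg(\bc)$. If $i\in\supp(\bc-\bg)$, then $(3)$ gives $b_i=0$, hence $d_i=c_i+a_i\geq c_i$. By $(1)$ there exists $j$ with $b_j>0$, and $(3)$ gives $j\notin\supp(\bc-\bg)$ while $(4)$ gives $a_j=0$, so $d_j=c_j-b_j<c_j$ (note $b_j\le c_j$ is forced by $\bd\in\NN^n$, which is implicit since the decomposition must produce a lattice point). Finally, $\bd\neq 0$: if $\ba\neq 0$ some $a_i>0$ and then $d_i=c_i+a_i>0$ by $(4)$; otherwise $\ba=0$, and $(2)$ combined with $\bb\le\bc$ yields some $i$ with $b_i<c_i$, giving $d_i=c_i-b_i>0$.

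For the ``in particular'' statement, I split on whether $b_i=0$. If $b_i=0$, then $c_i-b_i=c_i\geq g_i$ since $\bc\geq\bg$, so $\max(g_i,c_i-b_i)=c_i$. If $b_i>0$, then by $(3)$ we have $i\notin\supp(\bc-\bg)$, so $c_i=g_i$, and then $c_i-b_i\le c_i=g_i$, so again $\max(g_i,c_i-b_i)=c_i$. The main thing to be careful about is the edge case $\ba=0$ in condition $(2)$, where one must explicitly invoke $\bd\in\NN^n$ (equivalently $\bb\le\bc$) to conclude $\bd\neq 0$ from $\bb\neq\bc$; beyond that the proof is routine.
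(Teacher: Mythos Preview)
Your proof is correct and follows essentially the same approach as the paper: both define $\ba$ and $\bb$ as the positive and negative parts of $\bd-\bc$, verify conditions (1)--(4) directly, and handle the converse by the same case analysis (including the observation that $\bd\in\NN^n$ is needed to force $\bb\le\bc$ when $\ba=0$). The ``in particular'' argument is also the same split on whether $b_i=0$.
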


\begin{proof}
   Assume $\bd \in C_\bg(\bc)$, then $\bd=\bc+\ba-\bb$ where for each $i \in [n]$, 
   $$ b_i=\max(c_i-d_i,0) \qand  
      a_i=\max(d_i-c_i,0).$$
   Then by the definition of $C_\bg(\bc)$, 
   since $d_i\lneq c_i$ for some $i \notin \supp(\bc-\bg)$, we must have $b_i >0$, meaning that $\bb \neq 0$. 
   Moreover, if $\ba = 0$,  then since $\bd \neq 0$, we must have $\bb \neq \bc$. Also, from the way $\bb$ and $\ba$ were defined it follows that 
   $$\supp(\bb)\cap \supp(\ba)= \supp(\bb) \cap \supp(\bc-\bg)=\emptyset.$$ 

   Conversely, suppose $\bd=\bc+\ba-\bb \in \NN^n$ for some $\bb, \ba \in \NN^n$ satisfying the four conditions above. Then, since  
   $\supp(\bb)\cap \supp(\ba) =\emptyset$, for each $i \in [n]$ 
   \begin{equation}\label{e:d}
   d_i=\begin{cases}
   c_i+a_i & i \in \supp(\ba)\\
   c_i-b_i & i \in \supp(\bb)\\
   c_i     & \mbox{otherwise}.
   \end{cases}
   \end{equation}
   
   Since $\bb \neq 0$, we have $b_j > 0 $ for some $j \in[n]$. Together with the  fact that  $\supp(\bb) \cap \supp(\bc-\bg)=\emptyset$, from \eqref{e:d} we can conclude that 
 $$ d_j =c_j-b_j \lneq c_j \qforsome j \notin \supp(\bc-\bg).$$  
 Moreover, if $i \in  \supp(\bc -\bg)$, then again by \eqref{e:d} $d_i \geq c_i$.
 Finally, $\bd = 0$ would imply that $\ba=0$, which would then, by assumption, imply that $\bb \neq \bc$, which would then imply that $c_i \neq b_i$ for some $i \in [n]$. Now using \eqref{e:d} this means that $d_i \neq 0$ for some $i \in [n]$, a contradiction. So $\bd \neq 0$, and we have now shown that $\bd \in C_\bg(\bc)$.   
 
 To show the final equality, note that $b_i$ can only be nonzero only when $g_i=c_i$, and so
 $$\max(g_i, c_i-b_i)=
 \begin{cases}
 \max(g_i,c_i)  & \qif c_i \gneq g_i\\
 \max(c_i,c_i-b_i) & \qif c_i=g_i
 \end{cases}= c_i \qforall i \in [n].$$
 \end{proof}

We will use the monomial version of \cref{L: miw/v}, so we give a direct translation below.

\begin{corollary}\label{c:miw/v} 
Let $\bc, \bg  \in \NN^n$ be such that $\bc\geq \bg$, and suppose $\bn=\bcx$ and $\bu=\bgx$. With notation as in \cref{d:Cax}, a monomial $\bm \in C_\bu(\bn)$ if and only if 
$$\bm=\frac{\bn\bw}{\bv}$$ for some monomials $\bw, \bv \in S$ such that
\begin{enumerate}
 \item $\bv \neq 1$;
 \item if $\bw =1 $ then  $\bv\neq \bn$;
 \item $\gcd \Big(\bv, \frac{\bn}{\bu} \Big)=1$ (in particular $\bv \mid \bu$);
 \item $\gcd(\bv,\bw) = 1$ (in particular $\bv \mid \bn$).
 \end{enumerate}
In particular, when the equivalent conditions above hold, $$\lcm \Big (\bu, \frac{\bn}{\bv} \Big )=\bn.$$
\end{corollary}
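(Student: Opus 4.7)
My plan is to deduce \cref{c:miw/v} by direct translation of \cref{L: miw/v} through the dictionary
\[
\bn = \bcx, \quad \bu = \bgx, \quad \bm = \bdx, \quad \bw = \bax, \quad \bv = \bbx,
\]
under which addition of vectors becomes multiplication of monomials, subtraction becomes division (when meaningful), the condition $\bb \neq 0$ becomes $\bv \neq 1$, and a support-disjointness condition $\supp(\bb) \cap \supp(\bc') = \emptyset$ becomes the coprimality condition $\gcd(\bv, \bcx') = 1$. Under this dictionary the equation $\bd = \bc + \ba - \bb$ turns into $\bm = \bn\bw/\bv$, with the caveat that the right-hand side is a well-defined monomial in $S$ if and only if $\bv \mid \bn\bw$.

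With the dictionary fixed, I would check the four conditions one at a time: $\bb \neq 0 \iff \bv \neq 1$ is immediate; the ``if $\ba = 0$ then $\bb \neq \bc$'' clause transports verbatim to the ``if $\bw = 1$ then $\bv \neq \bn$'' clause; the condition $\supp(\bb) \cap \supp(\bc-\bg) = \emptyset$ becomes $\gcd(\bv, \bn/\bu) = 1$, where one notes that $\bn/\bu$ is a monomial because $\bu \leq \bc$ coordinatewise by hypothesis; and finally $\supp(\bb) \cap \supp(\ba) = \emptyset$ becomes $\gcd(\bv, \bw) = 1$. The two parenthetical divisibility claims fall out for free once the translation is in place: condition (4) forces $\gcd(\bv,\bw)=1$, so the well-definedness requirement $\bv \mid \bn\bw$ collapses to $\bv \mid \bn$; combining this with condition (3) and the factorization $\bn = \bu \cdot (\bn/\bu)$ then yields $\bv \mid \bu$.

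For the concluding equality $\lcm(\bu, \bn/\bv) = \bn$, I would simply read off the coordinatewise identity from \cref{L: miw/v}: the exponent of $\lcm(\bu, \bn/\bv)$ at the $i$-th variable is $\max(g_i, c_i - b_i)$, which the lemma asserts equals $c_i$, the $i$-th exponent of $\bn$. The only mild subtlety in the whole argument is making sure the two notions of validity line up: ``belongs to $\NN^n$'' is a coordinatewise positivity condition on exponent vectors, while ``is a monomial in $S$'' is a divisibility condition on the corresponding fractions, and this mismatch is absorbed cleanly by conditions (3) and (4). Beyond that bookkeeping, the proof is a mechanical translation and I anticipate no real obstacle.
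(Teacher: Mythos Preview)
Your proposal is correct and matches the paper's approach exactly: the paper presents \cref{c:miw/v} as a ``direct translation'' of \cref{L: miw/v} with no separate proof, and your dictionary $\bn=\bcx$, $\bu=\bgx$, $\bm=\bdx$, $\bw=\bax$, $\bv=\bbx$ is precisely the intended one. Your handling of the parenthetical divisibility claims and the $\lcm$ identity is also correct and complete.
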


\begin{example}\label{e:added}
If $\bn=x_1x_2x_3^3$ and $\bu=x_1x_2$, then by picking $\bv=x_1$ and $\bw=x_4$ we have $x_2x_3^3x_4=\frac{x_1x_2x_3^3\cdot x_4}{x_1} \in C_\bu(\bn)$. 
\end{example}

\begin{lemma}
Let $\bm$, $\bn$, and $\bu$ be monomials such that $\bu \mid  \bm \mid \bn$.
Then  
$$C_\bu(\bn)\subset C_\bm(\bn) 
\qand 
C_\bu(\bn)\subset C_\bu(\bm).$$
 \end{lemma}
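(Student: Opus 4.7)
The approach is to work entirely in the multi-index form of \cref{d:Cax}: writing $\bu = \bx^\bg$, $\bm = \bx^\bh$, and $\bn = \bx^\bc$, the divisibility chain $\bu \mid \bm \mid \bn$ becomes $\bg \leq \bh \leq \bc$ componentwise, and both containments reduce to checking the defining conditions of the sets $C_\bg(\cdot)$ and $C_\bh(\cdot)$ coordinate by coordinate.

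The preliminary observation to record is that $\bg \leq \bh \leq \bc$ forces
\[
\supp(\bc - \bh) \subseteq \supp(\bc - \bg) \qand \supp(\bh - \bg) \subseteq \supp(\bc - \bg),
\]
since $h_i < c_i$ forces $g_i < c_i$, and $g_i < h_i$ forces $g_i < c_i$. Equivalently, at every coordinate $i$ with $g_i = c_i$ the sandwich $g_i \leq h_i \leq c_i$ forces $h_i = g_i = c_i$. This simple fact is the engine of the whole proof.

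To establish $C_\bg(\bc) \subseteq C_\bh(\bc)$, I would pick an arbitrary $\bd \in C_\bg(\bc)$ and verify the three conditions defining $C_\bh(\bc)$. Nonvanishing is automatic. For $i \in \supp(\bc - \bh)$, the first support containment places $i$ in $\supp(\bc - \bg)$, whence $d_i \geq c_i$ by hypothesis on $\bd$. For the strict inequality, take the index $j \notin \supp(\bc - \bg)$ with $d_j \lneq c_j$ supplied by membership in $C_\bg(\bc)$; since $g_j = c_j$, the sandwich forces $h_j = c_j$, so $j \notin \supp(\bc - \bh)$ as required.

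The containment $C_\bg(\bc) \subseteq C_\bg(\bh)$ is dual and handled identically: for $i \in \supp(\bh - \bg) \subseteq \supp(\bc - \bg)$ one gets $d_i \geq c_i \geq h_i$, and the same witness $j$ supplies $d_j \lneq c_j = h_j$ with $j \notin \supp(\bh - \bg)$. There is no genuine obstacle here; the only subtle point, shared by both directions, is that the coordinate $j$ witnessing the strict inequality must survive the change of base set, and this survival is exactly the content of the equality-locus inclusions $\{i : g_i = c_i\} \subseteq \{i : h_i = c_i\}$ and $\{i : g_i = c_i\} \subseteq \{i : h_i = g_i\}$ derived in the preliminary step.
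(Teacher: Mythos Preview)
Your proof is correct. Both containments follow cleanly from the support inclusions you identify at the outset, and the key observation that any coordinate $j$ with $g_j = c_j$ automatically satisfies $h_j = g_j = c_j$ is exactly what is needed to transfer the witness for the strict inequality.

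The paper takes a different route: rather than working coordinate-by-coordinate from the raw definition of $C_\bg(\bc)$, it invokes the characterization of \cref{c:miw/v}, writing an element of $C_\bu(\bn)$ as $\frac{\bn\bw}{\bv}$ with $\gcd(\bv,\frac{\bn}{\bu})=1$ and $\gcd(\bv,\bw)=1$, and then verifies the analogous conditions for $C_\bm(\bn)$ and $C_\bu(\bm)$ by manipulating gcd's (e.g.\ $\gcd(\bv,\frac{\bn}{\bm}) \mid \gcd(\bv,\frac{\bn}{\bu}) = 1$). Your argument is more self-contained and arguably more transparent, since it never leaves the componentwise description; the paper's argument has the virtue of reinforcing the $\bv,\bw$ machinery that is used repeatedly elsewhere, but it also requires handling the edge case $\bm = \bn$ separately (since condition~(2) of \cref{c:miw/v} would fail when rewriting $\bh = \frac{\bm\bw'}{\bv}$ with $\bw' = \bw$). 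Your approach needs no such case distinction.
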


\begin{proof} Assume $\bm \neq \bn$, otherwise our assertion is trivial.
    Let $\bh\in C_\bu(\bn)$, then $\bh=\frac{\bn\bw}{\bv}$ where $\bv$ and $\bw$ are monomials satisfying the conditions of \cref{c:miw/v}. 
    
    To see $\bh \in  C_\bm(\bn)$, we only need to check Condition~(3) of  \cref{c:miw/v}. 
    $$
    \gcd\left (\bv,\frac{\bn}{\bm} \right )\mid \gcd \left (\bv,\frac{\bn}{\bu} \right ) 
    =1.
    $$

    To see that $\bh \in C_\bu(\bm)$,
    write $\bn=\bm\bm'
    $, and write $\bh=\frac{\bm \bw'}{\bv}$ where $\bw'=\bm'\bw$. To show  $\bh\in C_\bu(\bm)$, we use \cref{c:miw/v} again. Condition~(1) still holds.
    Since $\bm\neq \bn$, $\bm'\neq 1$ and so~(2) holds. For~(3), note that $$
    \gcd\left (\bv,\frac{\bm}{\bu} \right )\mid \gcd \left (\bv,\frac{\bn}{\bu} \right )=1.$$
    Finally,~(4) holds because $$\gcd(\bv,\bw')=\gcd(\bv,\bm') = \gcd \left (\bv, \frac{\bn}{\bm} \right ) \mid \gcd \left (\bv, \frac{\bn}{\bu}\right )=1.$$

\end{proof}

\begin{proposition}\label{P: C_I and "N_I"}
    Let $I$ be a monomial ideal in the polynomial  ring $S$. 
    Suppose $\bm$ and $\bn$ are nontrivial monomials with $\bn \in \mingens(I)$ and $\bm\notin I$. Then  the following statements are equivalent. 
    \begin{enumerate}
		\item $\m\in C_I(\bn)$;
		\item if $\bu\in I$ and $\bm \mid  \bu$,  then $\bn \mid \bu$;  
		\item if $\bu\in \LL_{I+(\m)}$ such that $\bm \mid \bu$ and $\bu\neq \m$, then $\bn\mid \bu$.
  \end{enumerate}
\end{proposition}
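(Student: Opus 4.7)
The plan is to establish the equivalence by proving the biconditionals $(1) \iff (2)$ and $(2) \iff (3)$ separately. The first biconditional is essentially a coordinate-wise exponent-chasing argument that leverages the factorization of $\bm$ provided by \cref{c:miw/v}, while the second is a direct calculation in the lcm-lattice of $I+(\bm)$.

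For $(1) \Rightarrow (2)$, I would invoke \cref{c:miw/v} to write $\bm = \bn\bw/\bv$ with the four stated properties, and set $\bg = \gcd(I)$. Condition~(3) of that corollary, $\gcd(\bv, \bn/\bg) = 1$, implies $\supp(\bv) \subseteq \{i : n_i = g_i\}$, while condition~(4) forces $\supp(\bv)\cap \supp(\bw) = \emptyset$. Given $\bu \in I$ with $\bm \mid \bu$, I split into cases on the coordinate $i$: off $\supp(\bv)$ we have $m_i = n_i + w_i \geq n_i$, hence $u_i \geq m_i \geq n_i$; on $\supp(\bv)$ we have $n_i = g_i$, and since $\bu$ is a multiple of some generator of $I$ we have $\bg \mid \bu$, so $u_i \geq g_i = n_i$. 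Thus $\bn \mid \bu$.

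For the converse $(2) \Rightarrow (1)$, I would verify the two defining conditions of $C_\bg(\bn)$ directly. To show $m_i \geq n_i$ whenever $n_i > g_i$: by the definition of $\gcd$ some $\bm_k \in \mingens(I)$ attains $(\bm_k)_i = g_i$, so if $m_i < n_i$ then $\bu := \lcm(\bm, \bm_k) \in I$ is divisible by $\bm$ yet has $u_i = \max(m_i, g_i) < n_i$, violating~(2). To show $m_i < n_i$ for some $i$ with $n_i = g_i$: if no such $i$ existed then, combined with the bound just proven, we would get $\bn \mid \bm$ and hence $\bm \in I$, contradicting the standing hypothesis $\bm \notin I$.

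The equivalence $(2) \iff (3)$ is more routine. For $(3) \Rightarrow (2)$, given $\bu \in I$ divisible by $\bm$, I pick any $g \in \mingens(I)$ dividing $\bu$ and form $\bu^* := \lcm(\bm, g) \in \LL_{I+(\bm)}$; then $\bm \mid \bu^*$, and $\bu^* \neq \bm$ because $g \mid \bm$ would force $\bm \in I$. Applying~(3) yields $\bn \mid \bu^* \mid \bu$. For $(2) \Rightarrow (3)$, any $\bu \in \LL_{I+(\bm)}$ divisible by $\bm$ with $\bu \neq \bm$ must be the lcm of $\bm$ together with at least one minimal generator of $I$ (otherwise $\bu = \bm$ or $\bu$ is an lcm of elements of $\mingens(I)$); in either subcase $\bu$ is a multiple of some element of $I$, hence $\bu \in I$, and~(2) applies. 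The main obstacle I anticipate is the careful bookkeeping in the $(1) \Rightarrow (2)$ step, where the interaction between $\supp(\bv)$, $\supp(\bw)$, and $\supp(\bn - \bg)$ must be tracked coordinate-by-coordinate; the remaining implications are essentially lcm manipulations.
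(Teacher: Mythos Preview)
Your proof is correct. The main structural difference from the paper is that the paper argues cyclically $(1)\Rightarrow(2)\Rightarrow(3)\Rightarrow(1)$, whereas you prove $(1)\Leftrightarrow(2)$ and $(2)\Leftrightarrow(3)$ separately. The $(2)\Rightarrow(3)$ step is essentially identical in both. For $(1)\Rightarrow(2)$, the paper avoids the coordinate bookkeeping by invoking the identity $\lcm(\gcd(I),\bn/\bv)=\bn$ from \cref{c:miw/v} directly: picking $\bn'\in\mingens(I)$ dividing $\bu$ and writing $\bn'=\bm'\gcd(I)$, one gets $\bn\mid\lcm(\bn',\bm)\mid\bu$ in one line. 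The more substantial divergence is in closing the cycle: the paper proves $(3)\Rightarrow(1)$ by reconstructing $\bv,\bw$ from $\lcm(\bm,\bn)=\bm\bv=\bn\bw$ and then verifying the four conditions of \cref{c:miw/v}, using a specific $\bu=\lcm(\bm,\bn')\in\LL_{I+(\bm)}$ to derive a contradiction for condition~(3). You instead prove $(2)\Rightarrow(1)$ by verifying the exponent inequalities in the \emph{definition} of $C_{\bg}(\bn)$ directly, and handle $(3)\Rightarrow(2)$ by the easy reduction through $\bu^*=\lcm(\bm,g)$. Your route is slightly more elementary in that it never needs the factorization $\bm=\bn\bw/\bv$ for the converse direction, while the paper's route keeps everything in the monomial language of \cref{c:miw/v} and never unpacks exponent vectors.
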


\begin{proof}
	$(1)\Longrightarrow(2):$ Suppose $\m\in C_I(\bn)$ and $\bu\in I$ is a monomial such that $\bu \neq \m$ and $\m\mid \bu$. 
        By \cref{c:miw/v}, there are  monomials $\bw$ and $\bv$ such that   
       $$\m=\frac{\bn\bw}{\bv }.$$ 
       		Since $\bu \in I$,  $\bn' \mid \bu$ for some $\bn' \in \mingens(I)$. Then since also $\m\mid \bu$, it follows that $\lcm(\bn',\bm)\mid \bu$. 
		
         As $\bn' \in I$, we can assume $\bn'=\bm'\gcd(I)$ for some monomial $\bm'$. So  $$\lcm(\bn',\bm)= \lcm\Big(\bm'\gcd(I), \frac{\bn \bw}{\bv }\Big).$$ Therefore, using \cref{c:miw/v}, we have
         $$\bn=\lcm\Big(\gcd(I), \frac{\bn}{\bv }\Big) \ \Big | \ 
           \lcm\Big(\bm'\gcd(I), \frac{\bn\bw}{\bv }\Big)=
           \lcm(\bn',\bm)\mid \bu$$ 	

 $(2) \Longrightarrow (3):$ Let  $\bu\in \LL_{I+(\m)}$ with $\bu\neq \m$, if $\m\mid \bu$. Then, since $\bu \neq \m$, for some $\bn' \in \mingens(I)$ we must have $\bn' \mid \bu$, so $\bu \in I$ and therefore by~(2) we have  $\bn\mid \bu$.

$(3) \Longrightarrow (1):$  Suppose the conditions in $(3)$ hold
	for. Our goal is to show that $\m\in
	C_I(\bn)$, and for this purpose we use the equivalent
	conditions in \cref{c:miw/v}.
 
     The monomial $\lcm(\m,\bn)$ is divisible by both $\m$ and $\bn$, and so for some monomials $\bv$ and $\bw$,
    we can write $$\lcm(\m,\bn)=\m \bv=\bn \bw $$ or, equivalently,
    \begin{equation}\label{e:mu} \m=\frac{\bn\bw}{\bv }.
    \end{equation} By cancelling out common factors from $\bv$
    and $\bw$, we can assume that $\gcd(\bv,\bw)=1$.
    
    If $\bv=1$, then $\m=\bn\bw\in I$, which is a contradiction, so $\bv\neq 1$.
    
    If $\bw=1$ and $\bv=\bn$, then $\m=1$ which is a contradiction. 
	
	It remains to show that $\gcd \Big(\bv, \frac{\bn}{\gcd(I)} \Big)=1$.

 	Suppose this is not the case, in other words, suppose there is an $s\in[n]$ such that $x_s\mid \gcd \Big(\bv, \frac{\bn}{\gcd(I)} \Big)$. 
 
  Now $x_s\mid \frac{\bn}{\gcd(I)}$ means that, if $\gamma$ is the highest power such that $x_s ^\gamma \mid \bn$, then there is some other $\bn'$ in $\mingens(I)$ such that 
 \begin{equation}\label{e:mj} x_s^\gamma \nmid \bn'.
 \end{equation}
   Let $\bu=\lcm(\m,\bn') \in \LL_{I+(\m)}$. Then $\bm \mid \bu$ and $\bm \neq \bu$, which implies that $\bn\mid \bu$ by assumption. Therefore, by \eqref{e:mu} 
     \begin{equation}\label{e:mi-mid}
     \bn\mid \lcm\Big(\frac{\bn\bw}{\bv },\bn' \Big).
     \end{equation}
     Now $x_s \mid \bv$ and  we know $\gamma$ is the highest power such that $x_s ^\gamma \mid \bn$, and since $\gcd(\bv,\bw)=1$, $x_s\nmid \bw$, therefore  
 $$x_s^\gamma \nmid \frac{\bn\bw}{\bv }.$$ But this, along with \eqref{e:mj}  
 contradicts \eqref{e:mi-mid}. Therefore $\gcd \Big(\bv, \frac{\bn}{\gcd(I)} \Big)=1$ and we are done.
  \end{proof}

\begin{proposition}[\bf{Minimal generators for $I+(\m)$}]\label{P: m not in I}
     Let $I$ be a monomial ideal in the polynomial  ring $S$. 
    Suppose $\bm$ and $\bn$ are nontrivial monomials with $\bn \in I$ and $\m\in C_I(\bn)$. Then
    \begin{enumerate} 
     \item $\m\notin I$; \label{i:m-notin-I} 
     \item $\m \nmid \bn'$ for every $\bn' \in \mingens{I}\ssm\{\bn\}$; \label{i:nmid} 
       \item  
       if $\bn \notin \mingens(I)$, \label{i:mingens-1}
     then $$\mingens(I+(\m)) =\mingens(I) \cup \{\m\}.$$ 
     \item if $\bn \in \mingens(I)$, then \label{i:mingens-2} 
         $$\mingens(I+(\m)) =
            \begin{cases}
               \mingens(I) \cup \{\m\} & \m \nmid \bn \\
               \big (\mingens(I)\ssm\{\bn\} \big ) \cup \{\m\} & \m \mid \bn. 
            \end{cases}$$
     \end{enumerate}
\end{proposition}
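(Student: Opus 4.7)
The plan is to prove the assertions in order: first~(1), then~(2), and then obtain~(3) and~(4) as essentially bookkeeping consequences. The main technical tool will be the presentation of $\bm$ furnished by \cref{c:miw/v}: write $\bm = \bn\bw/\bv$ with $\bv\neq 1$, $\gcd(\bv,\bn/\gcd(I))=1$, $\gcd(\bv,\bw)=1$, and the auxiliary condition that $\bv\neq \bn$ when $\bw=1$.

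For~(1), I will do a variable-by-variable comparison. Pick any variable $x_s$ dividing $\bv$; such an $s$ exists because $\bv\neq 1$. The condition $\gcd(\bv,\bn/\gcd(I))=1$ forces the $x_s$-exponent of $\bn$ to coincide with that of $\gcd(I)$, while $\gcd(\bv,\bw)=1$ forces $x_s$ not to appear in $\bw$. Hence
$$\bm_s \;=\; \bn_s + \bw_s - \bv_s \;=\; \bn_s - \bv_s \;<\; \bn_s \;=\; \gcd(I)_s.$$
Since every monomial in $I$ is a multiple of some minimal generator, and hence of $\gcd(I)$, each such monomial has $x_s$-exponent at least $\gcd(I)_s$; this forces $\bm\notin I$.

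For~(2), suppose toward contradiction that $\bm \mid \bn'$ for some $\bn'\in \mingens(I)\ssm\{\bn\}$. A second variable-by-variable check shows $\bn \mid \bn'$: when $x_s \mid \bv$, the argument above gives $\bn_s = \gcd(I)_s \leq \bn'_s$ because $\bn'\in I$; when $x_s \nmid \bv$, we have $\bn_s \leq \bn_s + \bw_s = \bm_s \leq \bn'_s$. Now since $\bn \in I$, choose $\bn''\in\mingens(I)$ with $\bn''\mid \bn$. Then $\bn''\mid \bn \mid \bn'$ with both $\bn''$ and $\bn'$ in $\mingens(I)$, forcing $\bn''=\bn'$ and hence $\bn'\mid \bn$. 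Combined with $\bn\mid\bn'$ this yields $\bn=\bn'$, a contradiction.

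Parts~(3) and~(4) are then immediate. By~(1), $\bm\notin I$, so $\bm$ is always a genuine new minimal generator of $I+(\bm)$. By~(2), $\bm$ divides no element of $\mingens(I)\ssm\{\bn\}$, so all such generators survive unchanged. When $\bn\notin\mingens(I)$, or when $\bn\in\mingens(I)$ but $\bm\nmid\bn$, the generator $\bn$ plays no special role and we obtain $\mingens(I+(\bm))=\mingens(I)\cup\{\bm\}$; in the remaining case $\bn\in\mingens(I)$ and $\bm\mid\bn$, the only original generator killed is $\bn$ itself, yielding $(\mingens(I)\ssm\{\bn\})\cup\{\bm\}$. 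The only genuine obstacle is the strict inequality in~(1); once that is in place, everything else unfolds mechanically. Notice that this plan deliberately avoids invoking the preceding equivalence proposition, whose statement presupposes $\bm\notin I$---which is exactly the conclusion of~(1).
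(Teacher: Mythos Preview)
Your proof is correct and, for part~(1), essentially identical to the paper's: both pick a variable $x_s$ dividing $\bv$, observe that $\bn_s=\gcd(I)_s$ and $\bw_s=0$, and conclude $\bm_s<\gcd(I)_s$.

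For part~(2) there is a small but genuine difference. The paper, having just established~(1), invokes \cref{P: C_I and "N_I"} (the equivalence $(1)\Rightarrow(2)$) to conclude $\bn\mid\bn'$ directly. You instead redo that implication by hand via the same exponent split (cases $x_s\mid\bv$ and $x_s\nmid\bv$), and then give the careful argument through an auxiliary $\bn''\in\mingens(I)$ to reach $\bn=\bn'$. Your route is more self-contained and sidesteps two minor awkwardnesses in the paper's citation: the hypothesis $\bm\notin I$ of \cref{P: C_I and "N_I"} (which is fine once~(1) is proved, so not actually circular) and the hypothesis $\bn\in\mingens(I)$ there (which your statement does not assume, though the relevant implication in that proposition does not really need it). The paper's approach is shorter; yours is cleaner in the present generality. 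For~(3) and~(4) both proofs agree that these are immediate from~(1) and~(2).
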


\begin{proof}
	Let $\bu=\gcd(I)$. By \cref{c:miw/v}, for some monomials $\bw$ and $\bv \neq 1$, 
 $$\m=\frac{\bn\bw}{\bv }, \quad 
       \gcd \Big(\bv, \frac{\bn}{\bu} \Big )=1,  \qand 
       \gcd(\bw,\bv)=1.$$

   Since $\bv \neq 1$, $x_s \mid \bv$ for some $s \in [n]$.   Since $\bv \mid \bn$, we must have $x_s \mid \bn$. Since $\gcd (\bv, \frac{\bn}{\bu})=1$, if $\gamma$ is the highest power such that $x_s ^\gamma \mid \bn$, then $x_s ^\gamma \mid \bu=\gcd(I)$, and hence $x_s ^\gamma \mid \bn'$ for every $\bn' \in I$.  
   Therefore if $\m\in I$, then $x_s ^\gamma \mid \bm$. So we have 
   $$ x_s^{\gamma+1} =x_s.x_s^{\gamma} \mid \bv\bm \mid \bn \bw.$$
   On the other hand, since $\gcd(\bw,\bv)=1$, $x_s \nmid \bw$, which implies that  $x_s ^{\gamma+1} \mid \bn$, a contradiction. Therefore $\bm \notin I$, as claimed in \ref{i:m-notin-I}.
    
    Now suppose for some $\bn' \in \mingens(I) \ssm\{\bn\}$ we have $\m \mid \bn'$. Then  \cref{P: C_I and "N_I"} implies that $\bn \mid \bn'$, which since $\bn'$ is minimal generator implies that $\bn = \bn'$, a contradiction. This settles \ref{i:nmid}.
    
    Finally,  statements \ref{i:mingens-1} and \ref{i:mingens-2} follow directly from statements \ref{i:m-notin-I} and \ref{i:nmid}. suppose $\m \nmid \bn$ and $\bn \in \mingens(I)$.   
   \end{proof}

  \begin{proposition}\label{p: latticeiso}
 Let $I$ be a monomial ideal in the polynomial  ring $S$. 
    Suppose $\bm$ and $\bn$ are nontrivial monomials with $\bn \in \mingens(I)$, $\bm\in C_I(\bn)$ and $\bm \mid \bn$.
     Then 
     \begin{enumerate}
         \item\label{i:lcm-equal} if  $\emptyset \neq A\subseteq \mingens(I)\setminus\{\bn\}$ then $\lcm(A\cup(\bn))=\lcm(A\cup(\bm))$;
         \item\label{i:equal}  $\LL_I\setminus \{\bn\}=\LL_{I+(\m)}\setminus \{\m\}$;
         \item \label{i:isom} $\LL_I$ and $\LL_{I+(\m)}$ are isomorphic as lattices.
 \end{enumerate}
 \end{proposition}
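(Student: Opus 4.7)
My plan for (1) is to first apply \cref{c:miw/v} to the hypothesis $\bm \in C_I(\bn)$ together with $\bm \mid \bn$: writing $\bm = \bn\bw/\bv$, the condition $\gcd(\bv,\bw)=1$ combined with $\bm \mid \bn$ forces $\bw = 1$, so $\bm = \bn/\bv$ with $\bv \neq 1$ and $\bv \mid \bn$. The forward divisibility $\lcm(A\cup\{\bm\}) \mid \lcm(A\cup\{\bn\})$ is immediate from $\bm \mid \bn$. For the reverse, I set $L = \lcm(A)$, which lies in $I$ since $A \neq \emptyset$, whence $\lcm(L,\bm) \in I$ and $\bm \mid \lcm(L,\bm)$. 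Now \cref{P: C_I and "N_I"}(2) gives $\bn \mid \lcm(L,\bm) = \lcm(A\cup\{\bm\})$; combined with $\lcm(A) \mid \lcm(A\cup\{\bm\})$ this yields $\lcm(A\cup\{\bn\}) \mid \lcm(A\cup\{\bm\})$.

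For (2), I will prove both set inclusions using (1) and \cref{P: m not in I}. By \cref{P: m not in I}(4), $\mingens(I+(\bm)) = (\mingens(I)\setminus\{\bn\}) \cup \{\bm\}$. For the forward inclusion, given $L = \lcm(B) \in \LL_I \setminus \{\bn\}$, I split on whether $\bn \in B$. If $\bn \notin B$, then $L \in \LL_{I+(\bm)}$ directly, and $L \neq \bm$ since either $B = \emptyset$ (so $L = 1 \neq \bm$) or $L \in I$ while $\bm \notin I$ by \cref{P: m not in I}(1). If $\bn \in B$, the set $A = B \setminus \{\bn\}$ is nonempty (otherwise $L = \bn$), and by part (1), $L = \lcm(A\cup\{\bm\}) \in \LL_{I+(\bm)}$; that $L \neq \bm$ uses that no minimal generator of $I$ other than $\bn$ divides $\bn$, hence none divides $\bm$. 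The reverse inclusion is symmetric, again invoking (1) in the case $\bm \in B'$.

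For (3), I will define $\phi \colon \LL_I \to \LL_{I+(\bm)}$ by $\phi(\bn) = \bm$ and $\phi(L) = L$ on $\LL_I \setminus \{\bn\}$. By (2), $\phi$ is a bijection. Since both lattices are atomic with join given by $\lcm$, it suffices to check that $\phi$ is a poset isomorphism for the divisibility order. The only nontrivial cases involve $\bn$ on the source side or $\bm$ on the target side. The key observation is that the only element of $\LL_I \setminus \{\bn\}$ that divides $\bn$ is $1$: if $L = \lcm(B) \mid \bn$, every element of $B \subseteq \mingens(I)$ divides $\bn$, but no minimal generator other than $\bn$ itself can divide $\bn$; hence $B = \emptyset$ and $L = 1$. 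The analogous statement holds for $\bm$ in $\LL_{I+(\bm)}$. With this reduction, the equivalence $\bn \mid L \iff \bm \mid L$ for $L \in \LL_I \setminus \{\bn\}$ follows from $\bm \mid \bn$ in one direction and \cref{P: C_I and "N_I"}(2) applied to $L \in I$ in the other.

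The main obstacle will be the careful bookkeeping in (2) and (3) around edge cases: empty subsets yielding $L = 1$, the distinction between membership in $\LL_I$ versus membership in $I$, and ensuring $L \neq \bm$ (resp.\ $L \neq \bn$) in each branch. Conceptually, part (1) is where the substantive work happens, as it provides the bridge between lcms involving $\bn$ and those involving $\bm$; \cref{P: C_I and "N_I"}(2) is the key external input supplying the nontrivial divisibility in the reverse direction throughout.
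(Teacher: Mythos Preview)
Your proof is correct and follows essentially the same approach as the paper: the key step in (1) is to invoke \cref{P: C_I and "N_I"} to get $\bn \mid \lcm(A\cup\{\bm\})$, and parts (2) and (3) follow from (1) together with \cref{P: m not in I}. The paper uses condition (3) of \cref{P: C_I and "N_I"} where you use condition (2), but these are interchangeable here; your write-up actually supplies considerably more detail in the bookkeeping for (2) and (3) than the paper does (the paper dispatches both in two sentences), and your initial appeal to \cref{c:miw/v} to deduce $\bw=1$ is harmless but not used in what follows.
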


\begin{proof}  

To prove \eqref{i:lcm-equal}, suppose $\emptyset \neq A\subseteq \mingens(I)\setminus\{\bn\}$. Let  
$\bu=\lcm(A \cup(\bm)) \in \LL_{I+(\bm)}$. Then, since $A \neq \emptyset$, by \cref{P: m not in I} $\bu \neq \bm$. Since $\bm \mid \bu$, \cref{P: C_I and "N_I"} implies that $\bn \mid \bu$. Therefore, since 
$\bm \mid \bn$, 
$\bu = \lcm(A \cup  (\bm) )=\lcm(A \cup (\bn))$ and we are done.

Now \eqref{i:equal} follows immediately, since by \eqref{i:lcm-equal} the two lattices $\LL_I$ and $\LL_{I+(\bm)}$ maintain the same partial orders, and the only elements that are different in the two lattices are those corresponding to the single vertices $\bm$ and $\bn$. Finally \eqref{i:isom} is via the isomorphism 
$$f: \LL_I \longrightarrow \LL_{I+(\bm)}$$ which takes $\bn$ to $\bm$ and every other element of $\LL_I$ to itself.
	\end{proof}

 \begin{theorem}[\textbf{Main Theorem 1}]\label{c:replacement}
  Let $I$ be a monomial ideal in the polynomial ring $S=\kappa[x_1,\ldots ,x_n]$, and let $\bn \in \mingens(I)$, $\bm \in C_I(\bn)$ such that $\bm \mid \bn$. Let $\Delta$ be a simplicial complex whose vertices are labeled with the generators of $I$, and let $\Gamma$ be the same as $\Delta$, except that the vertex of $\Delta$ labeled with $\{\bn\}$ is labeled with  $\{\bm\}$ in $\Gamma$. 
  \begin{enumerate}
      \item For every monomial $\bu \in \LL_I\setminus \{\bn\}=\LL_{I+(\m)}\setminus \{\m\}$, $\Delta_{\leq \bu}=\Gamma_{\leq \bu}$ and $\Delta_{< \bu}=\Gamma_{< \bu}$.
      \item $\Delta$ supports a (minimal) free resolution of $I$ if and only if  $\Gamma$ supports a (minimal) free resolution of $I+(\bm)$.
  \end{enumerate}
  \end{theorem}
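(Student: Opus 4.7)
The plan is to exploit that $\Delta$ and $\Gamma$ share the same underlying simplicial complex and differ only in the label of a single vertex $v$ (labeled by $\bn$ in $\Delta$ and by $\bm$ in $\Gamma$), and then transfer everything through the lcm-lattice correspondence from \cref{p: latticeiso}.

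For part (1), fix $\bu \in \LL_I \setminus \{\bn\} = \LL_{I+(\bm)} \setminus \{\bm\}$ and an arbitrary face $\sigma$ of the common complex. I would split into three cases. If $\sigma$ does not contain $v$, then $\sigma$ carries identical labels in $\Delta$ and $\Gamma$, so $\lcm_\Delta(\sigma) = \lcm_\Gamma(\sigma)$ trivially. If $\sigma$ contains $v$ together with at least one other vertex, then part (1) of \cref{p: latticeiso} directly gives $\lcm_\Delta(\sigma) = \lcm_\Gamma(\sigma)$. The only delicate case is the singleton $\sigma = \{v\}$, where I must show $\bn \mid \bu \Leftrightarrow \bm \mid \bu$. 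Since $\bm \mid \bn$, the forward direction is immediate. For the converse, if $\bu \neq 1$ then $\bu = \lcm(A)$ for some nonempty $A \subseteq \mingens(I)$ so $\bu \in I$, and the implication (1)$\Rightarrow$(2) of \cref{P: C_I and "N_I"} then forces $\bn \mid \bu$; the case $\bu = 1$ is vacuous since neither $\bn$ nor $\bm$ divides $1$. This proves $\Delta_{\leq\bu} = \Gamma_{\leq\bu}$, and the identical argument applied with strict divisibility yields $\Delta_{<\bu} = \Gamma_{<\bu}$.

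For part (2), \cref{p:supresolution} reduces the question to checking, for each nonunit $\bu$ in the respective lcm lattice, that $\Delta_{\leq\bu}$ (resp.\ $\Gamma_{\leq\bu}$) is acyclic over $\kappa$. For $\bu \in \LL_I \setminus \{\bn\}$ the two subcomplexes agree by (1), so the acyclicity conditions match exactly. The leftover elements are $\bn \in \LL_I$ and $\bm \in \LL_{I+(\bm)}$: since $\bn \in \mingens(I)$ no other minimal generator divides it, so $\Delta_{\leq\bn}$ contains only $\emptyset$ and the vertex $\{v\}$, a point, hence acyclic; an analogous argument using \cref{P: m not in I} (no other minimal generator of $I$ divides $\bm$, since such a divisor would also divide $\bn$, contradicting minimality of $\bn$) shows $\Gamma_{\leq\bm}$ is likewise a point, hence acyclic. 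This gives the non-minimal statement. For minimality, \cref{p:minimality} requires $\lcm(\sigma) \neq \lcm(\tau)$ for every $\sigma \subsetneq \tau$; the only pairs whose labels in $\Delta$ and $\Gamma$ can possibly differ involve $v$, and the case analysis above shows that the distinctness condition in $\Delta$ is equivalent to that in $\Gamma$.

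The main obstacle I expect is the singleton case $\sigma = \{v\}$ in part (1): the labels $\bn$ and $\bm$ genuinely differ as monomials, and converting between divisibility by $\bn$ and by $\bm$ is precisely the content of the defining condition $\bm \in C_I(\bn)$, codified in \cref{P: C_I and "N_I"}. Once that single equivalence is locked in place, all other comparisons reduce to lcm-equalities coming from \cref{p: latticeiso} or to the minimality of $\bn$ as a generator of $I$.
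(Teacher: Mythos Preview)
Your proof is correct and follows essentially the same route as the paper: both use \cref{p: latticeiso} together with \cref{P: C_I and "N_I"} to establish that every face of the common underlying complex carries the same monomial label in $\Delta$ and in $\Gamma$ once $\bu\neq\bn,\bm$, then invoke \cref{p:supresolution} and \cref{p:minimality}. Your explicit three-case split on whether $\sigma$ contains $v$ is slightly more detailed than the paper's treatment, and your argument that $\Gamma_{\leq\bm}$ is a single vertex (via $\bm\mid\bn$ and minimality of $\bn$) is actually cleaner than citing \cref{P: m not in I}, which gives $\bm\nmid\bn'$ rather than the needed $\bn'\nmid\bm$; one small remark is that your invocation of \cref{P: C_I and "N_I"} implicitly uses $\bm\notin I$, which you could note follows from \cref{P: m not in I}\ref{i:m-notin-I}.
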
  

\begin{proof} 
(1): Choose $\bu\in \LL_{I+(\m)}\setminus \{\m\}=\LL_{I}\setminus \{\bn\}$ (\cref{p: latticeiso}(2)). By \cref{P: C_I and "N_I"} and the fact that $\bm \mid \bn$, we have $$\m\mid \bu \iff  \bn\mid \bu.$$ 
Moreover, by \cref{p: latticeiso}~(1) and our labeling of $\Gamma$, 
if $\sigma$ is a face of $\Gamma=\Delta$, then $\sigma$ will have the same monomial label in each complex. It follows that
 $$\sigma \in \Gamma_{\leq \bu}  \iff \sigma \in \Delta_{\leq \bu} \qand  \sigma \in \Gamma_{< \bu}  \iff \sigma \in \Delta_{< \bu}.$$ 
Hence, $\Gamma_{\leq \bu}=\Delta_{\leq \bu}$ and $\Gamma_{< \bu}=\Delta_{< \bu}$ for all $\bu\in \LL_{I+(\m)}\setminus \{\bm\}$.

(2): By~(1) for all monomials $\bu$ in the lcm lattices of $I$ and $I+(\bm)$, besides $\bm$ and $\bn$,  we have $\Delta_{\leq \bu}$ is acyclic if and only if  
$\Gamma_{\leq \bu}$ is acyclic. Moreover $\Delta_{\leq \bn}$ and $\Gamma_{\leq \bm}$ are both single points, and hence always acyclic.  It follows that $\Delta$ supports a free resolution of $I$ if and only if $\Gamma$ supports a free resolution of $I+(\bm)$ by \cref{p:supresolution}. 

By \cref{p: latticeiso}~(1), the monomial label of any $\tau\in \Delta$ is the same as the monomial label of $\tau \in \Gamma$ except for the vertex labeling by $\bn$. It follows that the free resolution of $\Delta$ is minimal if and only if the free resolution of $\Gamma$ is minimal by \cref{p:minimality}.
\end{proof}

\cref{c:replacement} fails without the condition $\bm \mid \bn$; the following is an example of such a case.

\begin{example}\label{e:IJJ'} Let $I=(abc,cde,ace)$. Then $ae, a^2e \in C_I(ace)$. Let $J=(abc,cde,a^2e)$ and $J'=(abc,cde,ae)$. Then 
$$\begin{array}{c|c|c}
\mbox{supports resolution of } I&
\mbox{does not support resolution of } J&
\mbox{supports resolution of } J'\\
\hline
&&\\
\begin{tikzpicture}[label distance=-5pt] 
\coordinate (A) at (0,0);
\coordinate (B) at (1,0);
\coordinate (C) at (2,0);
\draw[black, fill=black] (A) circle(0.04);
\draw[black, fill=black] (B) circle(0.04);
\draw[black, fill=black] (C) circle(0.04);
\draw[-] (A) -- (B);
\draw[-] (B) -- (C);
\node[label = below :$abc$] at (A) {};
\node[label = above :$ace$] at (B) {};
\node[label =  below :$cde$] at (C) {};
\end{tikzpicture} &
\begin{tikzpicture}[label distance=-5pt] 
\coordinate (A) at (0, 0);
\coordinate (B) at (1,0);
\coordinate (C) at (2,0);
\draw[black, fill=black] (A) circle(0.04);
\draw[black, fill=black] (B) circle(0.04);
\draw[black, fill=black] (C) circle(0.04);
\draw[-] (A) -- (B);
\draw[-] (B) -- (C);
\node[label = below :$abc$] at (A) {};
\node[label = above:$a^2e$] at (B) {};
\node[label =  below :$cde$] at (C) {};
\end{tikzpicture} &
\begin{tikzpicture}[label distance=-5pt] 
\coordinate (A) at (0, 0);
\coordinate (B) at (1,0);
\coordinate (C) at (2,0);
\draw[black, fill=black] (A) circle(0.04);
\draw[black, fill=black] (B) circle(0.04);
\draw[black, fill=black] (C) circle(0.04);
\draw[-] (A) -- (B);
\draw[-] (B) -- (C);
\node[label = below :$abc$] at (A) {};
\node[label = above: $ae$] at (B) {};
\node[label = below: $cde$] at (C) {};
\end{tikzpicture}\\
&\quad a^2e \nmid ace & ae \mid ace\\
&&\mbox{(\cref{c:replacement})}
\end{array}
$$

\end{example}

\begin{theorem} [\textbf{Main Theorem 2}] \label{p:expansion}
  Let $I$ be a monomial ideal in the polynomial ring $S=\kappa[x_1,\ldots ,x_n]$, and $\Delta$ a simplicial complex
  supporting a free resolution of $I$. Let $\bn$ and $\bm$ be monomials with $\bn \in \mingens(I)$, $\bm \notin I$,  and $\bm \nmid \bn$.  Suppose  $\Gamma$ is an expansion of $\Delta$ by adding a new edge attached to $\bn$, with a new vertex labeled with $\bm$, i.e. $$\Gamma = \Delta \cup \{\{\m\}, \{\m, \bn\}\}.$$
   Then $\Gamma$ supports a  free resolution of $I+(\bm)$ if and only if $\bm \in C_I(\bn)$.

Moreover, when $\bm \in C_I(\bn)$, the resolution of $I$ supported on $\Delta$ is minimal if and only if 
 the resolution of $I+(\bm)$ supported on $\Gamma$ is minimal.
\end{theorem}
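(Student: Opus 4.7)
The plan is to verify the criterion of \cref{p:supresolution}: that $\Gamma_{\leq \bu}$ is acyclic over $\kappa$ for every $1\neq \bu\in \LL_{I+(\bm)}$. Since $\bm\notin I$ and $\bm\nmid \bn$, we also have $\bn\nmid \bm$ (otherwise $\bm\in I$), so $\bm$ and $\bn$ are incomparable under divisibility and $\mingens(I+(\bm))=\mingens(I)\cup\{\bm\}$. Each $\bu\in\LL_{I+(\bm)}$ is the lcm of some nonempty subset of these generators. A useful preliminary observation is that for any such $\bu$, $\Delta_{\leq \bu}=\Delta_{\leq \bu^{*}}$, where $\bu^{*}$ is the lcm of those generators of $I$ that divide $\bu$; whenever $\bu^{*}\neq 1$, this complex is acyclic by the hypothesis that $\Delta$ supports a resolution of $I$.

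For the implication $\bm\in C_I(\bn)\Rightarrow \Gamma$ supports a resolution of $I+(\bm)$, I would split into three cases. (i) If $\bm\nmid \bu$, the new faces $\{\bm\}$ and $\{\bm,\bn\}$ are absent from $\Gamma_{\leq \bu}$, so $\Gamma_{\leq \bu}=\Delta_{\leq \bu}$, acyclic by the preliminary observation. (ii) If $\bu=\bm$, then $\Gamma_{\leq \bm}$ is a single vertex, hence acyclic. (iii) If $\bm\mid \bu$ and $\bu\neq \bm$, then \cref{P: C_I and "N_I"} forces $\bn\mid \bu$, so $\{\bm,\bn\}\in \Gamma_{\leq \bu}$; since $\bm$ appears in $\Gamma$ only inside $\{\bm\}$ and $\{\bm,\bn\}$, the vertex $\{\bm\}$ is a free face of $\Gamma_{\leq \bu}$, and the elementary collapse along $\{\bm\}$ reduces $\Gamma_{\leq \bu}$ to $\Delta_{\leq \bu}$. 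By \cref{p:homotopic}, $\rH_{i}(\Gamma_{\leq \bu})\cong \rH_{i}(\Delta_{\leq \bu})$, which again vanishes.

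For the converse, I would argue contrapositively. If $\bm\notin C_I(\bn)$, then by \cref{P: C_I and "N_I"} there exists $\bu\in\LL_{I+(\bm)}$ with $\bm\mid \bu$, $\bu\neq \bm$, and $\bn\nmid \bu$. The edge $\{\bm,\bn\}$ is then missing from $\Gamma_{\leq \bu}$, leaving $\{\bm\}$ isolated; yet $\bu\neq \bm$ forces some generator of $I$ to divide $\bu$, contributing a further vertex to $\Gamma_{\leq \bu}$. Hence $\Gamma_{\leq \bu}$ is disconnected, so $\rH_{0}(\Gamma_{\leq \bu})\neq 0$, contradicting \cref{p:supresolution}. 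For the minimality statement, I would appeal to \cref{p:minimality}: the only new strict face containments in $\Gamma$ are $\{\bn\}\subsetneq \{\bm,\bn\}$ and $\{\bm\}\subsetneq \{\bm,\bn\}$, and the labels $\bn,\bm,\lcm(\bm,\bn)$ involved are pairwise distinct because $\bm\nmid \bn$ (hypothesis) and $\bn\nmid \bm$ (just observed). Hence the resolution supported on $\Gamma$ is minimal if and only if the one on $\Delta$ is.

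The main obstacle is Case~(iii) above: one must simultaneously invoke \cref{P: C_I and "N_I"} to guarantee $\bn\mid \bu$ (so the elementary collapse is actually available inside $\Gamma_{\leq \bu}$) and handle the fact that $\bu$ need not itself lie in $\LL_I$, which is circumvented by the reduction $\Delta_{\leq \bu}=\Delta_{\leq \bu^{*}}$ to an element $\bu^{*}\in \LL_I$.
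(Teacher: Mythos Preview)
Your proposal is correct and follows essentially the same route as the paper: the same three-case split for the forward direction via \cref{p:supresolution}, the same contrapositive using \cref{P: C_I and "N_I"} for the converse, and the same appeal to \cref{p:minimality} for the minimality statement. Your explicit reduction $\Delta_{\leq \bu}=\Delta_{\leq \bu^{*}}$ with $\bu^{*}\in\LL_I$ is a welcome bit of extra care in Case~(iii), where $\bu\in\LL_{I+(\bm)}$ need not lie in $\LL_I$; the paper simply asserts $\Delta_{\leq \bu}$ is acyclic there without spelling this out.
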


\begin{proof}
$(\Longleftarrow):$
By \cref{p:supresolution} we need to check $\Gamma_{\leq \bu}$ is acyclic for all $1 \neq \bu\in \LL_{I+(\m)}$. 
 There are two scenarios.

\begin{itemize}
\item If $\m \nmid \bu$, then $\lcm(\m,\bn) \nmid \bu$,  so $\Gamma_{\leq \bu}$ =
$\Delta_{\leq \bu}$. Since $\Delta$ supports a free resolution of $I$,
$\Gamma_{\leq \bu}$ = $\Delta_{\leq \bu}$ is acyclic and we are done.

\item If $\bu = \m$, then since $\m\notin I$, $\bn' \nmid \m$ for all $\bn' \in \mingens I$. So $\Gamma_{\leq \bm}=\{\m\}$ is acyclic.

\item If $\m\mid \bu$ and $\bu \neq \m$, then $\bn \mid \bu$ by \cref{P: C_I and "N_I"}. Therefore $\lcm(\m,\bn)\mid \bu$, and $$\Gamma_{\leq \bu} =
\Delta_{\leq \bu} \cup \{{\m}, \{\m,\bn\}\}.$$ Since $\bn \in \Delta_{\leq
  u}$ and $\m$ is a free vertex,
$$\Delta_{\leq \bu} = {\Gamma_{\leq \bu}}_{\searrow \{\m\}}.$$ Hence,
$\Gamma_{\leq \bu}$ and $\Delta_{\leq \bu}$ are homotopy equivalent, and
since $\Delta_{\leq \bu}$ is acyclic, so is $\Gamma_{\leq \bu}$.
\end{itemize}

We have shown that for all $1\neq \bu \in \LL_{I+(\m)}$, $\Gamma_{\leq \bu}$
is acyclic. Therefore, by \cref{p:supresolution}, $\Gamma$ supports a free resolution of $I+(\m)$.

$(\Longrightarrow):$  Suppose $\m\notin C_I(\bn)$. Then, by \cref{P: C_I and "N_I"}, there exist $\bv\in \LL_{I+(\bm)}$ such that  $\bv \neq \bm$, 
$\m\mid \bv$ and $\bn\nmid \bv$. Then the edge $\{\bm,\bn\}$ is not contained in $\Gamma_{\leq \bv}$ and $\bm \in \Gamma_{\leq \bv}$. Therefore 
$\Gamma_{\leq \bv}$ contains an isolated vertex $\bm$ and at least one more vertex (since $\bv\neq \bm$) and is hence disconnected, and in particular not acyclic. So $\Gamma$ cannot support a free resolution of $I+(\m)$ by \cref{p:supresolution};  a contraction.

Finally, suppose the resolution of $I$ supported on $\Delta$ is minimal, and suppose $\emptyset \neq \tau \subsetneq \sigma \in \Gamma$. If $\sigma, \tau \in \Delta$, then by \cref{p:minimality} they have distinct monomial labels.  It remains to consider the faces of $\Gamma$ which are not in $\Delta$, which are exactly $\{\m\}$ and $\{\m,\bn\}$. Since $\m\nmid \bn$, we know $\bn\neq \lcm(\m,\bn)\neq \bm$, and now  from \cref{p:minimality} it follows that $\Gamma$ supports a minimal free resolution of $I+(\m)$. The converse follows immediately as $\Delta$ is a subcomplex of $\Gamma$.

\end{proof}

\begin{example}\label{e:IJJ'-1} Let $I=(abc,cde,ace)$ be as in \cref{e:IJJ'} with a resolution supported on a path, and take $a^2e \in C_I(ace)$, and let $J=I+(a^2e)$. Then by \cref{p:expansion}, we have  
$$\begin{array}{c|c}
\mbox{supports resolution of } I&
\mbox{supports resolution of } J\\
\hline
&\\
\begin{tikzpicture}[label distance=-5pt] 
\coordinate (A) at (0, 0);
\coordinate (B) at (1,0);
\coordinate (C) at (2,0);
\draw[black, fill=black] (A) circle(0.04);
\draw[black, fill=black] (B) circle(0.04);
\draw[black, fill=black] (C) circle(0.04);
\draw[-] (A) -- (B);
\draw[-] (B) -- (C);
\node[label = below :$abc$] at (A) {};
\node[label = below 
:$ace$] at (B) {};
\node[label =  below :$cde$] at (C) {};
\end{tikzpicture} &
\begin{tikzpicture}[label distance=-5pt] 
\coordinate (A) at (0, 0);
\coordinate (B) at (1,0);
\coordinate (C) at (2,0);
\coordinate (D) at (1,  1);
\draw[black, fill=black] (A) circle(0.04);
\draw[black, fill=black] (B) circle(0.04);
\draw[black, fill=black] (C) circle(0.04);
\draw[black, fill=black] (D) circle(0.04);
\draw[-] (A) -- (B);
\draw[-] (B) -- (C);
\draw[-] (B) -- (D);
\node[label = below :$abc$] at (A) {};
\node[label = below:$ace$] at (B) {};
\node[label =  below :$cde$] at (C) {};
\node[label = above :$a^2e$] at (D) {};
\end{tikzpicture}
\end{array}$$
\end{example}

\begin{example}\label{R: 4}
Continuing with \cref{E: on homogenization}, we are going to expand
$\Delta$ (\cref{dia:test}) to show \cref{p:expansion} and \cref{c:replacement}.
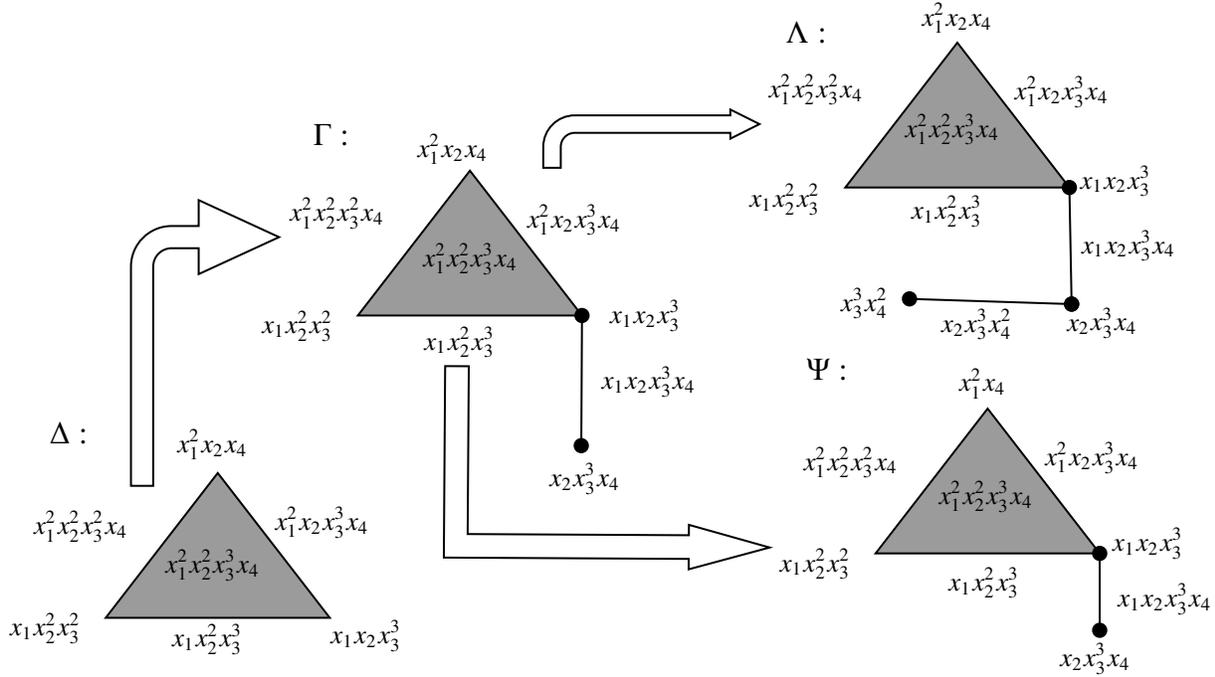
\begin{figure}[ht!]
    \centering

\tikzset{every picture/.style={line width=0.75pt}} 

\begin{tikzpicture}[x=0.75pt,y=0.75pt,yscale=-1,xscale=1]

\draw  [color={rgb, 255:red, 0; green, 0; blue, 0 }  ,draw opacity=1 ][fill={rgb, 255:red, 155; green, 155; blue, 155 }  ,fill opacity=1 ] (501.32,58.41) -- (557.83,131.51) -- (444.81,131.51) -- cycle ;
\draw    (557.83,131.51) -- (558.95,190.36) ;
\draw [shift={(558.95,190.36)}, rotate = 88.91] [color={rgb, 255:red, 0; green, 0; blue, 0 }  ][fill={rgb, 255:red, 0; green, 0; blue, 0 }  ][line width=0.75]      (0, 0) circle [x radius= 3.35, y radius= 3.35]   ;
\draw [shift={(557.83,131.51)}, rotate = 88.91] [color={rgb, 255:red, 0; green, 0; blue, 0 }  ][fill={rgb, 255:red, 0; green, 0; blue, 0 }  ][line width=0.75]      (0, 0) circle [x radius= 3.35, y radius= 3.35]   ;
\draw    (558.95,190.36) -- (477.19,187.72) ;
\draw [shift={(477.19,187.72)}, rotate = 181.85] [color={rgb, 255:red, 0; green, 0; blue, 0 }  ][fill={rgb, 255:red, 0; green, 0; blue, 0 }  ][line width=0.75]      (0, 0) circle [x radius= 3.35, y radius= 3.35]   ;
\draw [shift={(558.95,190.36)}, rotate = 181.85] [color={rgb, 255:red, 0; green, 0; blue, 0 }  ][fill={rgb, 255:red, 0; green, 0; blue, 0 }  ][line width=0.75]      (0, 0) circle [x radius= 3.35, y radius= 3.35]   ;
\draw  [color={rgb, 255:red, 0; green, 0; blue, 0 }  ,draw opacity=1 ][fill={rgb, 255:red, 155; green, 155; blue, 155 }  ,fill opacity=1 ] (255.49,123.02) -- (312.01,196.12) -- (198.98,196.12) -- cycle ;
\draw    (312.01,196.12) -- (311.65,261.8) ;
\draw [shift={(311.65,261.8)}, rotate = 90.31] [color={rgb, 255:red, 0; green, 0; blue, 0 }  ][fill={rgb, 255:red, 0; green, 0; blue, 0 }  ][line width=0.75]      (0, 0) circle [x radius= 3.35, y radius= 3.35]   ;
\draw [shift={(312.01,196.12)}, rotate = 90.31] [color={rgb, 255:red, 0; green, 0; blue, 0 }  ][fill={rgb, 255:red, 0; green, 0; blue, 0 }  ][line width=0.75]      (0, 0) circle [x radius= 3.35, y radius= 3.35]   ;
\draw   (292.48,121.57) -- (292.48,110.98) .. controls (292.48,102.21) and (299.58,95.1) .. (308.35,95.1) -- (386.84,95.1) -- (386.84,92.16) -- (401.54,99.51) -- (386.84,106.86) -- (386.84,103.92) -- (308.35,103.92) .. controls (304.46,103.92) and (301.3,107.08) .. (301.3,110.98) -- (301.3,121.57) -- cycle ;
\draw  [color={rgb, 255:red, 0; green, 0; blue, 0 }  ,draw opacity=1 ][fill={rgb, 255:red, 155; green, 155; blue, 155 }  ,fill opacity=1 ] (516.6,243.1) -- (573.11,316.19) -- (460.09,316.19) -- cycle ;
\draw    (573.11,316.19) -- (573.11,355) ;
\draw [shift={(573.11,355)}, rotate = 90] [color={rgb, 255:red, 0; green, 0; blue, 0 }  ][fill={rgb, 255:red, 0; green, 0; blue, 0 }  ][line width=0.75]      (0, 0) circle [x radius= 3.35, y radius= 3.35]   ;
\draw [shift={(573.11,316.19)}, rotate = 90] [color={rgb, 255:red, 0; green, 0; blue, 0 }  ][fill={rgb, 255:red, 0; green, 0; blue, 0 }  ][line width=0.75]      (0, 0) circle [x radius= 3.35, y radius= 3.35]   ;

\draw   (254.93,221.73) -- (254.35,306.43) -- (365.92,307.03) -- (365.95,301.78) -- (406.63,313.22) -- (365.8,324.22) -- (365.83,318.97) -- (242.34,318.3) -- (243,221.66) -- cycle ;
\draw  [color={rgb, 255:red, 0; green, 0; blue, 0 }  ,draw opacity=1 ][fill={rgb, 255:red, 155; green, 155; blue, 155 }  ,fill opacity=1 ] (128.31,275.61) -- (184.82,348.71) -- (71.8,348.71) -- cycle ;
\draw   (84.54,282.09) -- (84.54,171.44) .. controls (84.54,160.13) and (93.72,150.95) .. (105.03,150.95) -- (118.69,150.95) -- (118.69,137.88) -- (159.24,156.56) -- (118.69,175.23) -- (118.69,162.16) -- (105.03,162.16) .. controls (99.9,162.16) and (95.75,166.32) .. (95.75,171.44) -- (95.75,282.09) -- cycle ;

\draw (41.97,248.05) node [anchor=north west][inner sep=0.75pt]  [font=\Large] [align=left] {$\displaystyle {\displaystyle \Delta :}$};
\draw (105.92,251.8) node [anchor=north west][inner sep=0.75pt]   [align=left] {$\displaystyle x_{1}^{2} x_{2} x_{4}$};
\draw (21.52,344.49) node [anchor=north west][inner sep=0.75pt]   [align=left] {$\displaystyle x_{1} x_{2}^{2} x_{3}^{2}$};
\draw (183.3,348.26) node [anchor=north west][inner sep=0.75pt]   [align=left] {$\displaystyle x_{1} x_{2} x_{3}^{3}$};
\draw (33.52,293.25) node [anchor=north west][inner sep=0.75pt]   [align=left] {$\displaystyle x_{1}^{2} x_{2}^{2}$$\displaystyle x_{3}^{2} x_{4}$};
\draw (155.24,290.98) node [anchor=north west][inner sep=0.75pt]   [align=left] {$\displaystyle x_{1}^{2} x_{2}$$\displaystyle x_{3}^{3} x_{4}$};
\draw (103.25,350) node [anchor=north west][inner sep=0.75pt]   [align=left] {$\displaystyle x_{1} x_{2}^{2}$$\displaystyle x_{3}^{3}$};
\draw (100,312.59) node [anchor=north west][inner sep=0.75pt]   [align=left] {$\displaystyle x_{1}^{2} x_{2}^{2}$$\displaystyle x_{3}^{3} x_{4}$};
\draw (175.19,98.1) node [anchor=north west][inner sep=0.75pt]  [font=\Large] [align=left] {$\displaystyle {\displaystyle \Gamma :}$};
\draw (227.04,102.74) node [anchor=north west][inner sep=0.75pt]   [align=left] {$\displaystyle x_{1}^{2} x_{2} x_{4}$};
\draw (148.7,191.9) node [anchor=north west][inner sep=0.75pt]   [align=left] {$\displaystyle x_{1} x_{2}^{2} x_{3}^{2}$};
\draw (324.15,186.61) node [anchor=north west][inner sep=0.75pt]   [align=left] {$\displaystyle x_{1} x_{2} x_{3}^{3}$};
\draw (162.72,135.12) node [anchor=north west][inner sep=0.75pt]   [align=left] {$\displaystyle x_{1}^{2} x_{2}^{2}$$\displaystyle x_{3}^{2} x_{4}$};
\draw (282.42,138.4) node [anchor=north west][inner sep=0.75pt]   [align=left] {$\displaystyle x_{1}^{2} x_{2}$$\displaystyle x_{3}^{3} x_{4}$};
\draw (230.43,200) node [anchor=north west][inner sep=0.75pt]   [align=left] {$\displaystyle x_{1} x_{2}^{2}$$\displaystyle x_{3}^{3}$};
\draw (230,157.99) node [anchor=north west][inner sep=0.75pt]   [align=left] {$\displaystyle x_{1}^{2} x_{2}^{2}$$\displaystyle x_{3}^{3} x_{4}$};
\draw (293.62,269.18) node [anchor=north west][inner sep=0.75pt]  [rotate=-359.82] [align=left] {$\displaystyle x_{2} x_{3}^{3} x_{4}$};
\draw (320,220) node [anchor=north west][inner sep=0.75pt]   [align=left] {$\displaystyle x_{1} x_{2} x_{3}^{3} x_{4}$};
\draw (551.08,359.81) node [anchor=north west][inner sep=0.75pt]  [rotate=-359.82] [align=left] {$\displaystyle x_{2} x_{3}^{3} x_{4}$};
\draw (500.19,221.05) node [anchor=north west][inner sep=0.75pt]   [align=left] {$\displaystyle x_{1}^{2} x_{4}$};
\draw (409.81,311.97) node [anchor=north west][inner sep=0.75pt]   [align=left] {$\displaystyle x_{1} x_{2}^{2} x_{3}^{2}$};
\draw (577.64,301.63) node [anchor=north west][inner sep=0.75pt]   [align=left] {$\displaystyle x_{1} x_{2} x_{3}^{3}$};
\draw (421.8,260.73) node [anchor=north west][inner sep=0.75pt]   [align=left] {$\displaystyle x_{1}^{2} x_{2}^{2}$$\displaystyle x_{3}^{2} x_{4}$};
\draw (543.52,258.47) node [anchor=north west][inner sep=0.75pt]   [align=left] {$\displaystyle x_{1}^{2} x_{2}$$\displaystyle x_{3}^{3} x_{4}$};
\draw (495,321.55) node [anchor=north west][inner sep=0.75pt]   [align=left] {$\displaystyle x_{1} x_{2}^{2}$$\displaystyle x_{3}^{3}$};
\draw (490,278.06) node [anchor=north west][inner sep=0.75pt]   [align=left] {$\displaystyle x_{1}^{2} x_{2}^{2}$$\displaystyle x_{3}^{3} x_{4}$};
\draw (580.27,329.88) node [anchor=north west][inner sep=0.75pt]   [align=left] {$\displaystyle x_{1} x_{2} x_{3}^{3} x_{4}$};
\draw (423.87,216.13) node [anchor=north west][inner sep=0.75pt]  [font=\Large] [align=left] {$\displaystyle {\displaystyle \Psi :}$};
\draw (481.46,36.48) node [anchor=north west][inner sep=0.75pt]   [align=left] {$\displaystyle x_{1}^{2} x_{2} x_{4}$};
\draw (413.42,46.42) node [anchor=north west][inner sep=0.75pt]  [font=\Large] [align=left] {$\displaystyle {\displaystyle \Lambda :}$};
\draw (394.53,128.04) node [anchor=north west][inner sep=0.75pt]   [align=left] {$\displaystyle x_{1} x_{2}^{2} x_{3}^{2}$};
\draw (561.35,118.71) node [anchor=north west][inner sep=0.75pt]   [align=left] {$\displaystyle x_{1} x_{2} x_{3}^{3}$};
\draw (404.24,72.78) node [anchor=north west][inner sep=0.75pt]   [align=left] {$\displaystyle x_{1}^{2} x_{2}^{2}$$\displaystyle x_{3}^{2} x_{4}$};
\draw (528.24,73.79) node [anchor=north west][inner sep=0.75pt]   [align=left] {$\displaystyle x_{1}^{2} x_{2}$$\displaystyle x_{3}^{3} x_{4}$};
\draw (476.25,134.08) node [anchor=north west][inner sep=0.75pt]   [align=left] {$\displaystyle x_{1} x_{2}^{2}$$\displaystyle x_{3}^{3}$};
\draw (473,92.63) node [anchor=north west][inner sep=0.75pt]   [align=left] {$\displaystyle x_{1}^{2} x_{2}^{2}$$\displaystyle x_{3}^{3} x_{4}$};
\draw (554.58,190.46) node [anchor=north west][inner sep=0.75pt]  [rotate=-359.82] [align=left] {$\displaystyle x_{2} x_{3}^{3} x_{4}$};
\draw (561.97,151.37) node [anchor=north west][inner sep=0.75pt]   [align=left] {$\displaystyle x_{1} x_{2} x_{3}^{3} x_{4}$};
\draw (440.53,180.55) node [anchor=north west][inner sep=0.75pt]  [rotate=-359.82] [align=left] {$\displaystyle x_{3}^{3} x_{4}^{2}$};
\draw (491.24,191.07) node [anchor=north west][inner sep=0.75pt]   [align=left] {$\displaystyle x_{2} x_{3}^{3} x_{4}^{2}$};

\end{tikzpicture}
\caption{Expand on $\Delta$, related to \cref{R: 4}}
\label{expand secondly}
\end{figure}
Let $$\m_\gamma = x_2x_3^3x_4 \in C_I(x_1x_2x_3^3),\quad  
\m_\lambda = x_3^3x_4^2 \in C_{I+(\m_\gamma)}(x_2x_3^3x_4), \quad  
\m_\psi = x_1^2x_4 \in C_{I+(\m_\gamma)}(x_1^2x_2x_4).$$ 
Following \cref{expand secondly}, by \cref{p:expansion} and \cref{c:replacement} we have  
$$\begin{array}{lll}
\Gamma &= \Delta \cup \{\m_\gamma, \{\m_\gamma,x_1x_2x_3^3\}\} & \text{ supports a free resolution of } I+(\m_\gamma), \\
\Lambda &= \Gamma \cup \{\m_\lambda, \{\m_\lambda,\m_\gamma\}\} & \text{ supports a free resolution of } I+(\m_\gamma, \m_\lambda),\\
\Psi &= \Gamma  & \text{ supports a free resolution of } I+(\m_\gamma, \m_\psi).
\end{array}$$
\end{example}

\section{Applications to resolutions of expansion ideals}\label{s:betti}

We are now ready to explore the consequence of \cref{p:expansion}. In this section we show that \cref{p:expansion} allows us to \say{grow} or \say{expand} an ideal by one generator, while controlling its betti numbers, projective dimension, and regularity. Later on, in \cref{s:algorithm}, we will show that this process can be repeated again and again to build infinitely many ideals with controlled homological invariants.

\begin{theorem}[\bf{betti numbers of $I+(\m)$}] \label{T:Betti number unchange}
  Let $I$ be a monomial ideal   in the polynomial ring $S=\kappa[x_1,\ldots,x_n]$ with $\gcd(I) \neq 1$. 
  Let $\bn \in \mingens(I)$, $\m\in C_I(\bn)$, and $j >0$. Then for all $1\neq \bu\in \LL_{I+(\m)}$, we have the following relations between the multigraded betti numbers of $S/I$ and $S/(I+(\m))$
$$
\beta_{j,\bu}(S/(I+(\m))) =
  \begin{cases}
   1        & j=1, \quad \bu=\m  \\
   0        & j=1, \quad \bu=\bn, \quad \m \mid \bn\\
   \beta_{j,\bu}(S/I)+1        & j=2, \quad \bu=\lcm(\m,\bn), \quad \m \nmid \bn \\
   \beta_{j,\bu}(S/I)           & \text{otherwise};
  \end{cases} $$
 and for $p  \geq 0$ we have the following relation between their graded betti numbers.
$$
     \beta_{j,p}(S/(I+(\m))) =
    \begin{cases}
    \beta_{j,p}(S/I)+1      &    j=1, \  p=\deg(\m), \ \deg(\bn)\neq p \\
     \beta_{j,p}(S/I)+1     &    j=1, \  p=\deg(\m), \ \bm \nmid \bn\\
    \beta_{j,p}(S/I)-1      &    j=1, \  p=\deg(\bn), \  \bm \mid \bn\\
    \beta_{j,p}(S/I)+1      &    j=2, \  p=\deg(\lcm(\m,\bn)), \ \m\nmid \bn\\
    \beta_{j,p}(S/I)        & \text{otherwise;}
      \end{cases} 
$$
and finally we have the relation between their total betti numbers
$$\beta_{j}(S/(I+(\m))) =
  \begin{cases}
    \beta_{j}(S/I)+1         & j \in \{1,2\}, \quad \m\nmid \bn \\
    \beta_{j}(S/I)           & \text{otherwise.}
  \end{cases}
$$
\end{theorem}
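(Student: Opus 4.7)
The plan is to reduce the theorem to a multigraded computation via \cref{p:Bayer and Sturmfels}: once I establish the stated formula for multigraded betti numbers, the graded and total versions will follow by summing over $\bu$ of fixed degree, respectively over all $\bu$. To invoke \cref{p:Bayer and Sturmfels}, I fix once and for all a convenient simplicial complex $\Delta$ that supports a free resolution of $I$, namely the Taylor complex on $\mingens(I)$. Then \cref{c:replacement} (when $\bm \mid \bn$) and \cref{p:expansion} (when $\bm \nmid \bn$) produce a simplicial complex $\Gamma$ supporting a free resolution of $I+(\bm)$, obtained from $\Delta$ either by relabelling the single vertex $\bn$ as $\bm$ or by attaching a new edge $\{\bm, \bn\}$ with new vertex $\{\bm\}$. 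The theorem thus becomes a local comparison of $\Gamma_{<\bu}$ with $\Delta_{<\bu}$ combined with a local reduced-homology calculation.

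For the multigraded formula, fix $1 \neq \bu \in \LL_{I+(\bm)}$. In the replacement case $\bm \mid \bn$, \cref{p: latticeiso} and \cref{c:replacement}(1) give $\Gamma_{<\bu} = \Delta_{<\bu}$ for every $\bu \neq \bm$; and for $\bu = \bm$ no minimal generator of $I$ divides $\bm$ (else $\bm \in I$, contradicting \cref{P: m not in I}), so $\Gamma_{<\bm}$ consists only of the empty face, has $\dim \rH_{-1} = 1$ and trivial higher reduced homology, giving $\beta_{1, \bm} = 1$. In the expansion case $\bm \nmid \bn$, I split on whether $\bm$ divides $\bu$: when $\bm \nmid \bu$ neither of the new faces $\{\bm\}$, $\{\bm,\bn\}$ lies in $\Gamma_{<\bu}$, so $\Gamma_{<\bu} = \Delta_{<\bu}$; when $\bm \mid \bu$ and $\bu \neq \bm$, \cref{P: C_I and "N_I"} forces $\bn \mid \bu$, and the entire edge $\{\bm, \bn\}$ belongs to $\Gamma_{<\bu}$ precisely when $\bu \neq \lcm(\bm, \bn)$. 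In the former subcase $\{\bm\}$ becomes a free face attached to the already-present vertex $\{\bn\} \in \Delta_{<\bu}$, so $\Gamma_{<\bu}$ elementarily collapses onto $\Delta_{<\bu}$ and reduced homologies agree by \cref{p:homotopic}; in the latter subcase only the isolated vertex $\{\bm\}$ is adjoined, raising $\dim \rH_0$ by exactly one and thus $\beta_{2, \bu}$ by one. The case $\bu = \bm$ is handled as in the replacement case and produces $\beta_{1, \bm} = 1$.

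To pass to the graded and total statements, I sum the multigraded changes. The only contributions to $\beta_{1, p}$ come from minimal generators, so here I use \cref{P: m not in I}: the set $\mingens(I+(\bm))$ either adds $\bm$ (when $\bm \nmid \bn$) or swaps $\bm$ for $\bn$ (when $\bm \mid \bn$), and in the latter subcase \cref{c:miw/v} rules out $\bm = \bn$, forcing $\deg(\bm) < \deg(\bn)$, so the $+1$ at $\deg(\bm)$ and the $-1$ at $\deg(\bn)$ live in distinct degrees. For $j = 2$ only the single monomial $\lcm(\bm, \bn)$ contributes, and only when $\bm \nmid \bn$; for $j \geq 3$ nothing changes. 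Summing once more yields the total formula: when $\bm \mid \bn$ the $+1$ and $-1$ in $\beta_1$ cancel and no change occurs, while when $\bm \nmid \bn$ each of $\beta_1$ and $\beta_2$ increases by one and all higher total betti numbers are unchanged.

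The technical crux is the elementary-collapse step in the expansion case with $\bm \mid \bu$ and $\bu \neq \lcm(\bm, \bn)$: I must simultaneously certify that $\bn \mid \bu$ (so that $\{\bn\}$ is already a vertex of $\Delta_{<\bu}$ to which the free vertex $\{\bm\}$ can attach) and that the edge $\{\bm, \bn\}$ lies in $\Gamma_{<\bu}$. Both conclusions hinge on \cref{P: C_I and "N_I"}, which is precisely the characterisation of $C_I(\bn)$ distinguishing legitimate expansion monomials from arbitrary ones. Everything else is bookkeeping with the Taylor complex and \cref{p:Bayer and Sturmfels}.
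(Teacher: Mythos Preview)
Your proposal is correct and follows essentially the same route as the paper: fix the Taylor complex $\Delta$, build $\Gamma$ via \cref{c:replacement} or \cref{p:expansion}, and compare $\Gamma_{<\bu}$ with $\Delta_{<\bu}$ case by case using \cref{P: C_I and "N_I"} and an elementary collapse, then invoke \cref{p:Bayer and Sturmfels} and sum. The paper's case split in the expansion situation is organised as four parallel scenarios rather than your nested dichotomy, but the content, the key lemmas invoked, and the collapse argument are identical.
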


\begin{proof}  Let
$\Delta$ be a simplicial complex supporting a free resolution of $I$ (e.g. the Taylor simplex).  According to
\cref{p:Bayer and Sturmfels}, if $\Gamma$ supports a free resolution of $S/(I+(\bm))$, then we can compute the multigraded betti
numbers of $S/(I+(\m))$ by the dimension of $
\tilde{H}_i(\Gamma_{ <\bu})$ for  $\bu\in \LL_{I+(\m)}$.
So our goal is to compare
$\rH_{i}(\Delta_{< \bu})$ with $\rH_{i}(\Gamma_{< \bu})$
for all $\bu \in \LL_{I+(\m)}$ and $i\geq -1$.

 First we assume that $\m\mid \bn$. By \cref{c:replacement} $\Gamma=\Delta$ supports a free resolution of $I+(\m)$. If we pick $\bu\in \LL_{I+(\m)}\setminus\{\m\}=\LL_{I}\setminus\{\bn\}$, then  $\Delta_{<\bu}=\Gamma_{<\bu}$ and so for all $i$ 
 $$\beta_{i,\bu}(S/I)=
 \rH_{i-2}(\Delta_{<\bu})=
 \rH_{i-2}(\Gamma_{<\bu})=
 \beta_{i,u}(S/(I+(\m)).$$ 
 What is left is the vertex $\m\in \LL_{I+(\bm)}$ and $\bn  \in \LL_I$.  
 Now $\Gamma_{<\bm}$ is empty and so $\beta_{1,\m}(S/I)=\dim\rH_{-1}(\Gamma_{<\bu})=1$. Moreover by \cref{P: m not in I}, $\bn\notin \LL_{I+(\bm)}$. Therefore  we have $$\beta_{i,\bu}(S/I+(\m))=\begin{cases}
			1 & i=1, \bu=\m\\
            0 & \bu=\bn \\
             \beta_{i,\bu}(S/I) & \text{otherwise.}
		 \end{cases}
$$

When $\m\nmid \bn$, by \cref{p:expansion}, we can find $\Gamma=\Delta \cup \{\m, \{\m,
\bn\}\}$ which supports a free resolution of $I+(\m)$. Similarly, we want to compare
$\rH_{i}(\Delta_{< \bu})$ with $\rH_{i}(\Gamma_{< \bu})$
for all $\bu \in \LL_{I+(\m)}$ and $i\geq -1$. There are four different scenarios.

\begin{enumerate}[(i)]
\item $\m \nmid \bu$ \label{en: 2}

In this case  $\lcm(\m, \bn) \nmid \bu$.  Therefore, $\Gamma_{< \bu}$ = $\Delta_{< \bu}$, and by \cref{p:Bayer and Sturmfels}, for all $j \geq 1$ 
$$\beta_{j,\bu}(S/(I+(\m))) = \beta_{j,\bu}(S/I).$$

\item $\m\mid \bu, \bu \notin \{\m,\lcm(\m,\bn)\}$ \label{en: 3}

We have $\m \mid \bu$, $\bu \neq \m$, and $\m \in C_I(\bn)$. So by \cref{P: C_I and "N_I"}, $\bn \mid \bu$. Therefore, 
$\lcm(\m,\bn) \mid \bu$, 
which implies, given that $\bu\neq \lcm(\m,\bn)$,
$$\{\m,\bn\} \in \Gamma_{< \bu}.$$
Hence,
$$\Gamma_{ <\bu} = \Delta_{ <\bu} \cup \{\m,\{\m,\bn\}\}.$$
By \cref{d:collapse}, given that $\bn \mid \bu$, and $\m$ is a free face of $\Gamma_{ <\bu}$, 
$$\Delta_{<\bu} = (\Gamma_{<\bu})_{\searrow {\{\m\}}},$$ or in other words $\Gamma_{ <\bu}$ collapses to $\Delta_{ <\bu}$ along the free face $\{\m\}$. Therefore, $$\rH_{j}(\Gamma_{< \bu}) = \rH_{j}(\Delta_{< \bu})$$ for all $\bu$ satisfied condition in \ref{en: 3} and  $j \geq -1$. By \cref{p:Bayer and Sturmfels}, for all $j \geq 1$ and $\bu$ as in \ref{en: 3} $$\beta_{j,\bu}(S/(I+(\m))) = \beta_{j,\bu}(S/I).$$    
    
\item $\bu = \lcm(\m,\bn)$ \label{en: 4}

Since $\m\in C_I(\bn)$, $\m\notin I$ by \cref{P: m not in I}. Then, $\Gamma_{ <\bu} = \Delta_{ <\bu} \cup \{\m\}$ by \cref{D:subcomplex}, which means that $\Gamma_{ <\bu}$ is $\Delta_\bu$ plus an isolated vertex labeled with $\m$. Since $\m\nmid \bn$, $\Delta_{ <\bu}$ contains at least the vertex $\bn$, and $\Gamma_{ <\bu} = \Delta_{ <\bu} \cup \{\m\}$. So, the dimension of the reduced homology group increases by one in homological degree 0.

We conclude that when $\bu=\lcm(\m,\bn)$, and using  \cref{p:Bayer and Sturmfels}, 
$$\dim\rH_{j}(\Gamma_{< \bu}) =
  \begin{cases}
    0     &   j=-1,\  \bm \mid \bn\\
    \dim \rH_{j}(\Delta_{< \bu}) + 1     &   j=0,\ \bm \nmid \bn\\
    \dim\rH_{j}(\Delta_{< \bu})             & \text{otherwise},\\
  \end{cases}
\Longrightarrow
 \beta_{j,\bu}(S/(I+(\m))) =
  \begin{cases}
    0         &   j=1, \   \m\mid \bn\\
   \beta_{j,\bu}(S/I)+1         &   j=2, \  \m\nmid \bn\\
   \beta_{j,\bu}(S/I)           & \text{otherwise}.
  \end{cases}
$$

\item $\bu = \m$ \label{en: 1}

In this case  $\Gamma_{<\bm}$ is empty since $\m \notin I$. Therefore, 
$$
 \dim\rH_{j}(\Gamma_{< \bm}) =
  \begin{cases}
    1     &   j=-1\\
    \dim\rH_{j}(\Delta_{< \bm}) & \text{otherwise}\\
  \end{cases}
\Longrightarrow
 \beta_{j,\bm}(S/(I+(\m))) =
  \begin{cases}
   1       &   j=1 \\
   \beta_{j,\bm}(S/I)           & \text{otherwise.}
  \end{cases}
$$
\end{enumerate}

Now we can summarize all four cases with $\m\nmid \bn$, along with the case $\m\mid \bn$. For  $\bu\in \LL_{I+(\m)}$,
\begin{align*}
 \beta_{j,\bu}(S/(I+(\m))) &=
    \left \{ \begin{array}{lll}
   1        &   j=1, \  \bu=\m & \mbox{(from\ \ref{en: 1})} \\
   0        &   j=1, \  \bu=\bn, \  \m \mid \bn & \mbox{(from\ \ref{en: 4})} \\
   \beta_{j,\bu}(S/I)+1        &   j=2, \  \bu=\lcm(\m,\bn), \  \m \nmid \bn\ &
                                  \mbox{(from\ \ref{en: 4})} \\
   \beta_{j,\bu}(S/I)           & \text{otherwise}. &
  \end{array} \right . 
\end{align*}

The equations for the graded and total betti numbers now follow from 
\eqref{E:Betti}.
\end{proof}

\begin{example}\label{R: 5}
Let the monomial ideal $I=(x_1^2x_2x_4,x_1x_2^2x_3^2,x_1x_2x_3^3)$. Using Macaulay 2 \cite{GS}, we obtain
$$\begin{array}{llll}
\beta_{1,x_1^2x_2x_4}(S/I)&= \beta_{1,x_1x_2^2x_3^2}(S/I) 
& =\beta_{1,x_1x_2x_3^3}(S/I) &= \beta_{2,x_1x_2^2x_3^3}(S/I) =\\
 \beta_{2,x_1^2x_2^2x_3^2x_4}(S/I)&=\beta_{2,x_1^2x_2x_3^3x_4}(S/I) 
& =\beta_{3,x_1^2x_2^2x_3^3x_4}(S/I) &= 1.
\end{array}$$
Now, we can use \cref{T:Betti number unchange} to directly obtain $\beta_{j,\bm}$, $\beta_{j,p}$ and $\beta_{j}$ of $S/(I + (\m_\gamma))$ where $\m_\gamma = x_2x_3^3x_4$. By \cref{T:Betti number unchange},
$$\begin{array}{lll}
\beta_{1, x_2x_3^3x_4}(S/(I + (\m_\gamma)) & =\beta_{2, x_1x_2x_3^3x_4}(S/(I + (\m_\gamma)) &= 1,\\
&&\\
\beta_{j, \bu}(S/(I + (\m_\gamma)) &= \beta_{j, \bu}(S/I) &\mbox{for all other } j\geq 0,\ \bu \in \LL_{I+(\m_\gamma)}, 
\end{array}$$
which is identical to the results from Macaulay 2 \cite{GS}.
\end{example}

\section{Building classes of monomial ideals with similar betti numbers}\label{s:algorithm}

Let $\m_1,\ldots,\m_q$ be monomials in $S$, and $I=(\m_1,\ldots,\m_q)$ with $\gcd(\m_1,\ldots,\m_q)\neq 1$. A natural question is: Can we apply \cref{T:Betti number unchange} to $I$ as many times as we wish? In other words, can we find as many as monomials $\m_{q+1},\m_{q+2},\ldots$ such that the betti numbers of $$I+(\m_{q+1})+(\m_{q+2})+\cdots$$ can be computed from the betti numbers of $I$ by  \cref{T:Betti number unchange}? 

The process of \cref{c:miw/v} is decreasing the degree of $\gcd(I)$ as
new generators are added to $I$, in the sense that the
$$\deg \big (\gcd(I+(\m))\big ) \lneq \deg \big (\gcd(I) \big )
  \qwhere \m\in C_I(\bn)
  \qand \bn \in \mingens(I),$$ so the number of times we can repeat
  \cref{c:miw/v} to grow the same ideal $I$ while controlling it betti
  numbers is bounded by the degree of $\gcd(I)$.

To overcome this limitation, we can add a shared variable to the
generators before applying \cref{T:Betti number unchange}. Then we can
use \cref{T:Betti number unchange} as many times as we desire. In
other words, the betti numbers
of $$\cdots(\bv_2(\bv_1I+(\m_{q+1}))+(\m_{q+2}))\cdots$$ can be
computed from the betti numbers of $I$ for monomials $$\bv_i \in S,
\quad \m_{q+1}\in C_{\bv_1 I}(\bv_1\bn), \quad \m_{q+2}\in
C_{\bv_2(\bv_1 I+(\m_{q+1}))}(\bw)$$ where $\bv_1\bn$ and $\bw$ are
generators of $\bv_1 I$ and $\bv_2(\bv_1 I+(\m_{q+1}))$,
respectively. Before stating this fact in detail, we give a comparison
of (multigraded) betti numbers of $\bv I$ and $I$.

\begin{lemma}\label{L:Betti numbers on multiplication} 
Let $I$ be a monomial ideal in $S$ and $\bu$, $\bv$ nontrivial monomials with $\bu \in \LL_{\bv  I}$. 
Then for all $i\geq 0$ $$\beta_{i,\bu}(S/\bv  I) = \beta_{i,\frac{\bu}{\bv} }(S/I), \quad \beta_{i}(S/\bv  I) = \beta_{i}(S/I).$$ 
\end{lemma}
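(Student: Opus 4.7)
My plan is to exploit the fact that the simplicial complex supporting a resolution of $I$ supports, after relabeling, a resolution of $\bv I$, and that the subcomplexes appearing in the Bayer--Sturmfels formula for $\bv I$ are literally equal to subcomplexes for $I$.

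First, I would let $\bm_1,\ldots,\bm_q$ be the minimal generators of $I$ and observe that $\mingens(\bv I)=\{\bv\bm_1,\ldots,\bv\bm_q\}$, so $\LL_{\bv I}=\{1\}\cup \bv\cdot(\LL_I\setminus\{1\})$. In particular, for every $1\neq \bu\in\LL_{\bv I}$, the monomial $\bu/\bv$ lies in $\LL_I$, so the statement $\beta_{i,\bu}(S/\bv I)=\beta_{i,\bu/\bv}(S/I)$ makes sense. Next, take any simplicial complex $\Delta$ supporting a (minimal) free resolution of $I$ (the Taylor complex will do) with vertices labeled $\bm_1,\ldots,\bm_q$, and let $\Gamma$ be the identical simplicial complex with each vertex $\bm_i$ relabeled by $\bv\bm_i$. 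For any face $\sigma$, the label of $\sigma$ in $\Gamma$ equals $\bv$ times its label in $\Delta$.

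The main step is to verify that $\Gamma$ supports a (minimal) free resolution of $\bv I$ and that the relevant subcomplexes match. For any $\bu\in\LL_{\bv I}\setminus\{1\}$, writing $\bu=\bv\bw$ with $\bw\in\LL_I$, a face $\sigma\in\Gamma$ satisfies $\bv\lcm_\Delta(\sigma)\mid \bv\bw$ iff $\lcm_\Delta(\sigma)\mid \bw$, so
$$\Gamma_{\leq \bu}=\Delta_{\leq \bw}\qand \Gamma_{<\bu}=\Delta_{<\bw}.$$
Acyclicity of $\Gamma_{\leq \bu}$ thus follows from acyclicity of $\Delta_{\leq \bw}$ via \cref{p:supresolution}, so $\Gamma$ supports a free resolution of $\bv I$; minimality transfers by \cref{p:minimality} since multiplying every label by $\bv$ preserves all strict divisibilities. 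Applying \cref{p:Bayer and Sturmfels},
$$\beta_{i,\bu}(S/\bv I)=\dim_\kappa \rH_{i-2}(\Gamma_{<\bu};\kappa)=\dim_\kappa \rH_{i-2}(\Delta_{<\bw};\kappa)=\beta_{i,\bw}(S/I)=\beta_{i,\bu/\bv}(S/I)$$
for $i\geq 1$; the case $i=0$ is trivial since both sides vanish for $\bu\neq 1$.

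Finally, the equality of total betti numbers follows by summing: the map $\bw\mapsto \bv\bw$ is a bijection $\LL_I\setminus\{1\}\to \LL_{\bv I}\setminus\{1\}$, so using \eqref{E:Betti},
$$\beta_i(S/\bv I)=\sum_{1\neq \bu\in\LL_{\bv I}}\beta_{i,\bu}(S/\bv I)=\sum_{1\neq \bw\in\LL_I}\beta_{i,\bw}(S/I)=\beta_i(S/I)$$
for $i\geq 1$, and $\beta_0(S/\bv I)=1=\beta_0(S/I)$ holds trivially. No step here is a real obstacle; the only thing to be careful about is the bookkeeping between $\LL_I$ and $\LL_{\bv I}$, which is handled once the identification $\bw\leftrightarrow \bv\bw$ is fixed.
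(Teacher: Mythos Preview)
Your proof is correct and follows essentially the same route as the paper: establish the bijection $\LL_I\setminus\{1\}\leftrightarrow\LL_{\bv I}\setminus\{1\}$ via $\bw\mapsto\bv\bw$, identify the relevant subcomplexes $\Gamma_{<\bu}=\Delta_{<\bu/\bv}$, and apply \cref{p:Bayer and Sturmfels}. The only cosmetic difference is that the paper takes $\Delta$ to be the order complex of $\LL_I$ (invoking~\cite{GPW}) rather than an arbitrary supporting complex such as the Taylor simplex; your choice is slightly more self-contained, but the argument is the same.
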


\begin{proof}
	The second equation is a direct result of the first equation since the $i$-th betti number is the sum of all $i$-th multigraded betti numbers. So it suffices to prove that the first equation holds.  Now
	 \begin{align*}
		\LL_{\bv  I} =& \big \{\lcm(\bv \m_{1},\ldots ,\bv \m_{r}) \st \{\bv  \m_{1},\ldots ,\bv \m_{r}\} \subseteq \mingens(\bv I) \big \}\\
			=&\big \{\bv  \cdot \lcm(\m_{1},\ldots ,\m_{r}) \st \{ \m_{1},\ldots ,  \m_{r}\} \subseteq \mingens(I) \big \}\\
		=&\bv \LL_I.
	\end{align*}
In other words, there is a one to one correspondence, given by $\bu \longrightarrow \bv \bu$, between $\LL_I$ and $\LL_{\bv  I}$, which leads to an isomorphism of the lcm lattices. Now the order complex of the lcm lattice of a monomial ideal supports a free resolution of the ideal (\cite{GPW}), so if $\Delta$ is the order complex of  $\LL_I$ and $\Delta'$ is the order complex of $\LL_{\bv  I}$, for any $\bu\in \LL_{\bv I}$, 
$$\Delta_{<\frac{\bu}{\bv}} = \Delta'_{ <\bu}.$$ 
The equality $\beta_{i,\bu}(S/\bv  I) = \beta_{i,\frac{\bu}{\bv} }(S/I)$ now follows from \cref{p:Bayer and Sturmfels}.
\end{proof}

Combining \cref{L:Betti numbers on multiplication} and \cref{T:Betti number unchange} leads to the following statement.

\begin{theorem}[\bf{Expanding a monomial ideal}]\label{T:Betti number unchanged 2}
Let $I$ be a monomial ideal in the polynomial ring $S = \kappa[x_1, \ldots ,x_n]$. 
Supposed $\bv$ and $\bn$ are nontrivial monomials  in $S$, such that $\bv\neq 1$ and $\bn \in \mingens(I)$. Then $C_{\bv  I}(\bv\bn)\neq \emptyset$, and for every $\bm\in C_{\bv  I}(\bv\bn)$, we have

\begin{align*}
 \beta_{j,\bu}(S/(\bv I+(\m))) &=
  \begin{cases}
   1        &   j=1,\quad  \bu=\m \\
   0        &   j=1,\quad  \bu=\bv \bn, \quad \m \mid \bv \bn\\
   \beta_{j,\frac{\bu}{\bv}}(S/I)+1        &   j=2, \quad  \bu=\lcm(\m,\bv \bn),\quad  \m \nmid \bv \bn\ \\
   \beta_{j,\frac{\bu}{\bv}}(S/I)           & \text{otherwise};
  \end{cases} \\
  \beta_{j}(S/(\bv I+(\m))) &=
  \begin{cases}
    \beta_{j}(S/I)+1         &   j \in \{1,2\}, \quad  \m\nmid \bv \bn \\
    \beta_{j}(S/I)           & \text{otherwise.}
  \end{cases}
\end{align*}
for all $j \geq 0,$ $1\neq \bu\in \LL_{\bv I+(\m)}$
\end{theorem}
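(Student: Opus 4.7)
The strategy is to reduce the statement to two results already in hand: \cref{T:Betti number unchange}, applied to the ideal $\bv I$ with generator $\bv\bn$ and the monomial $\bm \in C_{\bv I}(\bv\bn)$; and \cref{L:Betti numbers on multiplication}, used to translate multigraded and total betti numbers of $S/(\bv I)$ back to those of $S/I$. So the plan is: first check the hypotheses needed to apply \cref{T:Betti number unchange} to the pair $(\bv I, \bv\bn)$, then transcribe its conclusions, and finally apply \cref{L:Betti numbers on multiplication} term by term.

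First I would verify that $C_{\bv I}(\bv\bn) \neq \emptyset$. The map $\bm_i \mapsto \bv\bm_i$ gives a bijection between $\mingens(I)$ and $\mingens(\bv I)$, so $\bv\bn \in \mingens(\bv I)$; also $\bv \mid \gcd(\bv I)$ and $\bv \neq 1$ implies $\gcd(\bv I) \neq 1$ when $n \geq 2$, while when $n = 1$ the ideal $\bv I$ is not equal to $(x_1)$ because $\bv \neq 1$. In either case \cref{l:sizeofC_I}(2) gives $C_{\bv I}(\bv \bn) \neq \emptyset$. Fix $\bm \in C_{\bv I}(\bv\bn)$. Now \cref{T:Betti number unchange} applies directly to the triple $(\bv I, \bv\bn, \bm)$ and produces the four cases for $\beta_{j,\bu}(S/(\bv I + (\bm)))$ in terms of $\beta_{j,\bu}(S/(\bv I))$ for $1\neq \bu \in \LL_{\bv I + (\bm)}$.

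To arrive at the stated formula, I would then invoke \cref{L:Betti numbers on multiplication} in the form $\beta_{j,\bu}(S/(\bv I)) = \beta_{j,\bu/\bv}(S/I)$ in each of the cases where $\bu \neq \bm$ appearing in \cref{T:Betti number unchange}. I need to check that in each such case $\bv \mid \bu$, so that $\bu/\bv$ is a monomial; this is immediate when $\bu \in \LL_{\bv I}$, and when $\bu = \lcm(\bm,\bv\bn)$ the fact that $\bv \mid \bv\bn \mid \bu$ does the job. The two cases $\bu = \bm$ (with $j=1$, $\beta = 1$) and $\bu = \bv\bn$, $\bm \mid \bv\bn$ (with $j=1$, $\beta = 0$) carry over unchanged. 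Combining gives the multigraded formula exactly as claimed. The total betti number formula then follows either by summing over $\bu \in \LL_{\bv I+(\bm)}$ using \eqref{E:Betti}, or more economically by applying the last displayed identity of \cref{T:Betti number unchange} (which gives $\beta_j(S/(\bv I + (\bm)))$ in terms of $\beta_j(S/(\bv I))$) together with the second identity of \cref{L:Betti numbers on multiplication} ($\beta_j(S/(\bv I)) = \beta_j(S/I)$).

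The only real obstacle is bookkeeping: making sure the "otherwise" case of \cref{T:Betti number unchange} combined with \cref{L:Betti numbers on multiplication} really reproduces the "otherwise" case in the statement, and that the two distinguished multidegrees $\bm$ and $\bv\bn$ in \cref{T:Betti number unchange}'s three special cases line up correctly with $\bm$ and $\bv\bn$ in the conclusion here. I do not expect any new idea beyond what is already in \cref{T:Betti number unchange} and \cref{L:Betti numbers on multiplication}; the proof is essentially a bilingual dictionary between the two.
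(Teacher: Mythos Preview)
Your proposal is correct and follows essentially the same route as the paper: apply \cref{l:sizeofC_I} to get $C_{\bv I}(\bv\bn)\neq\emptyset$, then combine \cref{T:Betti number unchange} (applied to $\bv I$, $\bv\bn$, $\bm$) with \cref{L:Betti numbers on multiplication} to pass from $\beta_{j,\bu}(S/\bv I)$ to $\beta_{j,\bu/\bv}(S/I)$. Your write-up is in fact more careful than the paper's about checking the side conditions (e.g.\ that $\bv\mid\bu$ in the relevant cases), but no new idea is involved.
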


\begin{proof}
	By \cref{l:sizeofC_I}, $C_I(\bn) \neq \emptyset$ if $\gcd(I)\neq 1$ and $I\neq (x_1)$. So, $C_{\bv  I}(\bv\bn) \neq \emptyset$ for any monomial $\bv \neq 1$. Let $\m\in C_{\bv I}(\bv\bn)$. By \cref{L:Betti numbers on multiplication}, $\beta_{j,\bu}(S/\bv  I) = \beta_{j,\frac{\bu}{\bv} }(S/I)$ for all $1\neq \bu\in \LL_{\bv I}$. The statement now follows directly from \cref{T:Betti number unchange}.
\end{proof}

\begin{remark}\label{R: convenientbuild}
	There is a convenient way to create a new monomial ideal with same betti numbers by \cref{T:Betti number unchanged 2}. For any $I=(\m_1,\ldots,\m_q)$, choose monomial $\bv$ such that  $$\gcd(\bv, \bm_i)=1 \quad \forall i\in [q].$$ 
	Then, by \cref{c:miw/v}, $$\bm_i=\frac{\bv \bm_i\cdot1}{\bv}\in C_{\bv I}(\bv\bm_i) \quad \forall i \in [q].$$ According to \cref{T:Betti number unchanged 2}, for all $j \geq 0$ $$\beta_j(I)=\beta_j(\bv I+\bm_i) \quad \forall i \in [q],$$ which is same as saying $$\beta_j(\m_1,\ldots,\m_q)= \beta_j(\bv \m_1,\ldots,\bv \m_{i-1},\, \m_i,\, \bv \m_{i+1},\ldots,\bv \m_q).$$
	For example, 
	$$\beta_j(x_1^2x_2,x_2x_3,x_1x_4)= \beta_j(x_1^2x_2x_5, x_2x_3,x_1x_4x_5).$$
	
\end{remark}
We can repeat \cref{T:Betti number unchanged 2} as many times as possible from any monomial ideal $I$ in $S = \kappa[x_1,\ldots,x_n], n\geq 2$. 

\begin{corollary}[\bf{Multiple expansions of a monomial ideal}]\label{C: infinitely expand ideal} 
    Let $I$ be an ideal minimally generated by monomials $\m_1,\ldots, \m_q$ in the polynomial ring $S=\kappa[x_1,\ldots,x_n]$, $n\geq 2$, $\kappa$ a field, and let $\bv_1,\ldots,\bv_s$ be any nontrivial monomials in $S$. Construct the ideals $I_1,\ldots,I_s$ where 
    $$    I_1=\bv_1I+(\bu_1),$$ and for $j>1$  
    $$I_j=\bv_1\cdots \bv_j I + 
    (\bv_2\cdots \bv_{j}\bu_1)+
      (\bv_3\cdots \bv_{j}\bu_2)+
    \cdots+(\bv_j\bu_{j-1})+(\bu_j),$$ 
    where  $\bu_1,\ldots,\bu_s$ are monomials such that $$\bu_j\in C_{\bv_jI_{j-1}}(\bw_j)  \qfor j\in [s], \qand \bw_{j-1} \in  \mingens(\bv_jI_{j-1}).$$
    If, for every $j\in [s]$, $\bv_j\cdots \bv_{s-j}\bu_j$ does not divide $\bw_{j-1}$,
     then 
    \begin{align*}
      \beta_{j}(S/I_s) &=
 \begin{cases}
    \beta_{j}(S/I)+ s         &   j \in \{1,2\} ,\\
    \beta_{j}(S/I)           & \text{otherwise.}
  \end{cases}
  \end{align*}

\end{corollary}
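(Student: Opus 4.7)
The plan is to prove this by induction on $s$, with \cref{T:Betti number unchanged 2} serving as the inductive engine at each stage. The crux is the algebraic identity
$$I_s = \bv_s\, I_{s-1} + (\bu_s),$$
which reduces a single expansion step in the sequence to exactly the situation handled by \cref{T:Betti number unchanged 2}.

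For the base case $s=1$, we have $I_1 = \bv_1 I + (\bu_1)$ directly. Since $\bu_1 \in C_{\bv_1 I}(\bw_1)$ for some minimal generator $\bw_1$ of $\bv_1 I$, and the hypothesis at $j=1$ supplies the non-divisibility needed to land in the $\bm\nmid\bv\bn$ branch, I would apply \cref{T:Betti number unchanged 2} with $\bv = \bv_1$, $\bn = \bw_1$, and $\bm = \bu_1$. The theorem yields $\beta_j(S/I_1) = \beta_j(S/I)+1$ for $j \in \{1,2\}$ and $\beta_j(S/I_1)=\beta_j(S/I)$ otherwise.

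For the inductive step, I would first verify the displayed identity by expanding the right-hand side: multiplying each summand in the definition of $I_{s-1}$ by $\bv_s$ absorbs one further factor of $\bv_s$ into that summand, producing precisely the generators listed in $I_s$ except for the final $(\bu_s)$, and appending $(\bu_s)$ completes the list. Then, since by hypothesis $\bu_s \in C_{\bv_s I_{s-1}}(\bw_s)$ for an appropriate $\bw_s \in \mingens(\bv_s I_{s-1})$, and the non-divisibility hypothesis at $j=s$ guarantees $\bu_s \nmid \bv_s\bw_s$, I apply \cref{T:Betti number unchanged 2} with $I_{s-1}$ in the role of $I$, $\bv_s$ in the role of $\bv$, and $\bu_s$ in the role of $\bm$. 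This gives $\beta_j(S/I_s) = \beta_j(S/I_{s-1}) + 1$ for $j\in\{1,2\}$ and equality elsewhere. Combining with the inductive hypothesis $\beta_j(S/I_{s-1}) = \beta_j(S/I) + (s-1)$ closes the induction.

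The main obstacle is purely bookkeeping. Verifying the identity $I_s = \bv_s I_{s-1} + (\bu_s)$ is straightforward but notation-heavy, and one must confirm that the somewhat opaque hypothesis ``$\bv_j\cdots \bv_{s-j}\bu_j$ does not divide $\bw_{j-1}$'' translates, at each stage $j$, into the non-divisibility condition required to apply the $\bm\nmid \bv\bn$ case of \cref{T:Betti number unchanged 2}. Neither step is conceptually deep, but both require careful index-tracking so that the induction runs cleanly without disturbing any higher betti number.
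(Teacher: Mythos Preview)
The paper gives no explicit proof of this corollary; it is stated as an immediate consequence of iterating \cref{T:Betti number unchanged 2}. Your inductive argument, built on the recursion $I_s = \bv_s I_{s-1} + (\bu_s)$ and applying \cref{T:Betti number unchanged 2} once at each stage, is precisely that iteration, so your approach matches the paper's intended proof. Your closing remark is apt: the only real work is index-chasing, and the hypothesis as printed has some evident index typos (e.g.\ $\bw_{j-1}$ versus $\bw_j$, and the product $\bv_j\cdots\bv_{s-j}$), so part of the exercise is reconstructing the intended non-divisibility condition at each step.
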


\section{Applications to projective dimension, regularity} \label{s:last section}

It is natural to wonder how do the homological invariants change while expanding $I$. This is the topic of the current section. Suppose $\bv$ is a monomial in $S$ so that $\gcd(\bv I)\neq 1$. Then by \cref{T:Betti number unchanged 2}, the $i$-th (multigraded) betti numbers of $I$ and $\bv I + \m$ are always same when $i\geq 2$,   $\m\in C_{\bv I}(\bn)$ and $\bn \in \mingens(\bv I)$. Therefore, the projective dimension of $I$ and $\bv I + (\bm)$ are the same if $\pd(S/I)\geq 2$.

\begin{theorem}[\bf{Projective dimension, depth and regularity of expansion}]\label{t:pd-d-r}
	Let $I$ be a monomial ideal in $S=\kappa[x_1,\ldots,x_n]$, $\bv$, $\bn$, and $\bm$ monomials such that $\gcd(\bv I) \neq 1$, $\bn \in \mingens (\bv I)$ and  $\m\in C_{\bv I}(\bn)$. Then
 \begin{enumerate}
 \item \label{i:pd} (Projective dimension)
 
\begin{equation*}
	\pd(S/(\bv I+(\m)))= \begin{cases} 2 &   \pd(S/I) = 1 \qand \m \nmid \bn, \\                                      
	\pd(S/I) & \mbox{otherwise}.
	\end{cases}
\end{equation*}

\item \label{i:depth} (Depth)  
\begin{equation*}
	\depth(S/(\bv I+(\m)))= 
 \begin{cases} n-2 &   \depth(S/I) = n-1 \qand \m \nmid \bn, \\ 
 \depth(S/I) & \mbox{otherwise}.
	\end{cases}
\end{equation*}

\item \label{i:reg} (Regularity) we can either increase the regularity arbitrarily large or keep it constant when $\bv = 1$. This depends on the $\m$ we choose. 

	\begin{align*}
		\reg(S/(I+(\m)))=\begin{cases}
			\reg(S/I) &   \deg(\m)-1 \leq \reg(I)\text{ and } \\ &\deg(\lcm(\m,\bn))-2\leq \reg(I)\\
			\max\{\deg(\m)-1,\deg(\lcm(\m,\bn))-2\} & \text{otherwise.}
		\end{cases}
	\end{align*}
\end{enumerate}
\end{theorem}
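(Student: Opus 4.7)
The plan is to derive all three parts directly from the detailed betti-number calculations established in \cref{T:Betti number unchange} and \cref{T:Betti number unchanged 2}, with the Auslander--Buchsbaum formula supplying the bridge to depth.

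For part (1), I would first apply \cref{L:Betti numbers on multiplication} to obtain $\pd(S/(\bv I)) = \pd(S/I)$, reducing the problem to the effect of adding the single generator $\m$ to $\bv I$. Then \cref{T:Betti number unchanged 2} says that among the total betti numbers $\beta_j(S/(\bv I + (\m)))$, only $\beta_1$ and $\beta_2$ can differ from their values on $S/(\bv I)$, each jumping by exactly $+1$ and only in the case $\m \nmid \bn$. A short case split on $\pd(S/I)$ then yields the formula: if $\pd(S/I) \geq 2$, the top homological step is untouched by any change to $\beta_1$ or $\beta_2$; if $\pd(S/I) = 1$, then $\beta_2(S/(\bv I)) = 0$, and the expansion promotes $\beta_2$ to $1$ precisely when $\m \nmid \bn$, making $\pd$ jump to $2$ in that subcase and stay at $1$ otherwise. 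For part (2), the graded Auslander--Buchsbaum formula $\depth(S/J) + \pd(S/J) = \depth(S) = n$ applies to any finitely generated graded $S$-module, so substituting the identity from (1) produces the depth formula immediately.

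For part (3), since $\bv = 1$, the graded version of \cref{T:Betti number unchange} applies directly. Using $\reg(S/J) = \max\{\, j - i : \beta_{i,j}(S/J) \neq 0\,\}$, I would compare graded betti numbers position by position. The formula shows the only pairs $(i,j)$ where they can differ are $(1, \deg(\m))$ (possibly $+1$), $(1, \deg(\bn))$ (possibly $-1$, only when $\m \mid \bn$), and $(2, \deg(\lcm(\m,\bn)))$ (possibly $+1$, only when $\m \nmid \bn$). Consequently the only new potential contributions to $\reg(S/(I + (\m)))$ are $\deg(\m) - 1$ and, when $\m \nmid \bn$, $\deg(\lcm(\m,\bn)) - 2$; every other contribution is inherited from $S/I$. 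Taking the maximum against $\reg(S/I)$ gives the first branch of the formula, while if either of the two new quantities exceeds $\reg(S/I)$ it dominates, producing the second branch.

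The main obstacle I foresee is the subcase $\m \mid \bn$ in part (3), where $\beta_{1, \deg(\bn)}$ drops by $1$: in principle this could lower the regularity if the position $(1, \deg(\bn))$ was the unique witness to $\reg(S/I)$. To rule this out I would invoke \cref{p: latticeiso}, which gives a lattice isomorphism $\LL_I \cong \LL_{I + (\m)}$ sending $\bn$ to $\m$ and fixing every other element, and whose part (1) shows that $\lcm(A \cup \{\bn\}) = \lcm(A \cup \{\m\})$ for every nonempty $A \subseteq \mingens(I)\setminus\{\bn\}$. Hence all higher multigraded betti numbers at multidegrees involving $\bn$ together with another generator are preserved at the same multidegree in $S/(I + (\m))$, so no regularity contribution through $\bn$ in the lcm lattice is lost. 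Together with the trivial observation $\deg(\m) \leq \deg(\bn)$ when $\m \mid \bn$, this verifies that the hypotheses of the first branch hold and that $\reg$ is indeed preserved.
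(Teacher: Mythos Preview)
Your approach is essentially the same as the paper's: parts~(1) and~(2) are derived from \cref{T:Betti number unchanged 2} together with the Auslander--Buchsbaum formula, and part~(3) from the graded formula in \cref{T:Betti number unchange}, exactly as the paper does. You are more careful than the paper in flagging the $\m \mid \bn$ subcase of~(3), where $\beta_{1,\deg(\bn)}$ drops by one; the paper's proof simply does not discuss this. Your patch via \cref{p: latticeiso} handles the non-principal case, but be aware it does not cover $I$ principal (e.g.\ $I=(x^2)$, $\bn=x^2$, $\m=x$ in $\kappa[x,y]$ gives $\reg(S/I)=1$ while $\reg(S/(\m))=0$), so the displayed regularity formula is actually imprecise in that corner --- a wrinkle the paper's own proof also leaves unaddressed.
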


\begin{proof}
	Since the projective dimension is the maximum of the homological degree with non-zero betti numbers, it will not change if the betti numbers change in lower (or equal) homological degree. By \cref{T:Betti number unchanged 2}, 
    $\beta_i(\bv I + (\m))=\beta_i(I)$ for all $i>2$. Hence, the projective dimension of two monomial ideals are the same if the projective dimension of $I$ is greater or equal to 2. 
	
	Suppose $\pd(S/I) = 1$. By \cref{T:Betti number unchanged 2}, $\beta_2(S/ (\bv I + (\m)))=\beta_2(S/I)=0$ if $\m$ divides a generator of $\bv I$. Otherwise, $\beta_2(S/(\bv I +(\m)))=\beta_2(S/I)+1=1$.  Hence, $\pd(S/(\bv I+(\m))) = \pd(S/I)$ and $\pd(S/(\bv I+(\m))) = 2$ respectively. This settles \eqref{i:pd}.

    By the Auslander-Buchsbaum theorem, $\depth(S/(\bv I + (\m)))=n-\pd(S/(\bv I + (\m)))$, and so \eqref{i:depth} follows.
 
	To see \eqref{i:reg}, note that by \cref{T:Betti number unchange}, $\beta_{i,j}(S/(I+(\m)))=\beta_{i,j}(S/I)$ except when  $$i=1,j=\deg(\m) \qor  i=2,j=\deg(\lcm(\m,\bn)).$$ Therefore the regularity will change (increase) if and only if $$\deg(\m) > \reg(S/I) +1 \qor \deg(\lcm(\m,\bn)) >\reg(S/I) +2.$$
\end{proof}

The following example is the application of \cref{t:pd-d-r}\eqref{i:reg}.

\begin{example}\label{ex:arbitrary-reg}
	Let $I=(abd,a^2b^2,ac^3e,a^2c^2)\subset \kappa[a,b,c,d]$ . Then $\gcd(I)=a$. Using Macaulay 2 \cite{GS},  we can compute that $\reg(S/I) = 5$. 
    By \cref{c:miw/v}, $\m\in C_{I}(abd)$ is in the form of $b^{1+y_2}c^{y_3}d^{1+y_4}$ 
	where $y_2,y_3,y_4\geq 0$. 
    
    According to \cref{t:pd-d-r}\eqref{i:reg}, if we want $\reg(I+(\m)) = \reg(I)$, then  $y_2+y_3+y_4\leq 4$. For instance, $\m=b^2cd^2$. 
    
    If we wish to have  $\reg(I+(\m))=1000$, then we can let let $y_2+y_3+y_4= 999$. For instance, $\m= b^{500}cd^{500}$. 
\end{example}

\section{Polarization and expansion: reduction to square-free monomial ideals}\label{s:pol}
Polarization - a tool which transforms a monomial ideal with a square-free one - will allow us to use Stanley-Reisner theory to study
the Cohen-Macaulay properties of expansion ideals via Stanley-Reisner theory. 

The {\bf polarization} of a monomial $\bm=x_1^{a_1}\cdots x_n^{a_n}$ in the polynomial ring $S=\kappa [x_1,\ldots,x_n]$
is a square-free monomial 
$$\P(\bm)=x_{1,1} x_{1,2} \cdots 
x_{1,a_1} x_{2,1}\cdots x_{2,a_2}\cdots 
x_{n,1}\cdots x_{n,a_n}
\quad \mbox{in the polynomial ring} \quad 
S_{\P(\bm)}=\kappa [x_{i,j} \st 1 \leq i \leq n, 1 \leq j \leq a_i].$$

If $I$ is an ideal minimally generated by monomials $\bm_1,\ldots,\bm_q$ where $\lcm(\bm_1,\ldots,\bm_q)=x_1^{b_1}\cdots x_n^{b_n}$, then we define the {\bf polarization of $I$} to be the square-free monomial ideal 
$$\P(I)=(\P(\bm_1),\ldots,\P(\bm_q)) 
\quad \mbox{in the polynomial ring} \quad 
S_{\P(I)}=\kappa [x_{i,j} \st 1 \leq i \leq n, 1 \leq j \leq b_i].$$

The operation of polarization is done via a quotient of a regular sequence, 
and preserves many properties of the ideal, such as its betti numbers, the (sequential) Cohen-Macaulay property, and more.

\begin{theorem}[{\bf Polarization and Expansion}]\label{t:polar} Let $I$ be a monomial ideal in $S=\kappa[x_1,\ldots,x_n]$  and let $\bn$ and $\bm$ be monomials such that $\bn \in \mingens (I)$. Then   
$$\bm\in C_{I}(\bn) \quad \mbox{ if and only if } \quad 
\P(\bm) \in C_{\P(I)}(\P(\bn)).$$
\end{theorem}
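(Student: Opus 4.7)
The plan is to reduce the equivalence to a direct comparison of exponent-vector conditions using the raw definition in \cref{d:Cax}, rather than the multiplicative criterion of \cref{c:miw/v}. The reason to avoid \cref{c:miw/v} is that if $\bm = \bn\bw/\bv$ is the canonical decomposition from $\bm \in C_I(\bn)$, then $\P(\bv)$ is \emph{not} the canonical witness $\lcm(\P(\bm),\P(\bn))/\P(\bm)$ in the polarized ring: these are distinct square-free monomials supported on different subsets of the $x_{i,j}$, so one cannot simply \say{polarize both sides} of \cref{c:miw/v}.

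First I would verify that $\gcd(\P(I)) = \P(\gcd(I))$. Writing $\gcd(I) = x_1^{g_1}\cdots x_n^{g_n}$, both sides equal $\prod_{i=1}^n \prod_{j=1}^{g_i} x_{i,j}$, since taking coordinate-wise minima of exponents corresponds to intersecting the blocks of polarized variables. Since $C_{\P(I)}(\P(\bn)) = C_{\gcd(\P(I))}(\P(\bn))$ by definition of $C_J$, this lets me work instead with the membership $\P(\bm) \in C_{\P(\bu)}(\P(\bn))$, where $\bu = \gcd(I)$.

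Next, writing $\bu = (g_1,\ldots,g_n)$, $\bn = (c_1,\ldots,c_n)$, $\bm = (a_1,\ldots,a_n)$, I would unpack both memberships using \cref{d:Cax}. The unpolarized condition $\bm \in C_{\bu}(\bn)$ becomes: $\bm \neq 1$; (A) $a_i \geq c_i$ for every $i$ with $c_i > g_i$; and (B) there exists some $i$ with $c_i = g_i$ and $a_i < c_i$. For the polarized side, identifying $\P(\bu), \P(\bn), \P(\bm)$ with $0/1$-vectors indexed by pairs $(i,j)$ and computing $\supp(\P(\bn) - \P(\bu)) = \{(i,j) : g_i < j \leq c_i\}$, a short check shows that \cref{d:Cax} becomes: $\P(\bm) \neq 1$; (A$'$) $a_i \geq c_i$ for every $i$ with $c_i > g_i$; and (B$'$) there exists some $i$ with $a_i < g_i$. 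Observe that (A) and (A$'$) are literally the same.

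The final step, and the main (though minor) obstacle, is to reconcile (B) with (B$'$). Given (A), these are equivalent: (B) clearly implies (B$'$), since $a_i < c_i = g_i$ forces $a_i < g_i$. Conversely, if $a_i < g_i$ for some $i$, then $c_i = g_i$ must hold, for otherwise $g_i < c_i$ would combine with (A) to give $g_i < c_i \leq a_i < g_i$, a contradiction; hence $a_i < g_i = c_i$, verifying (B). Threading this through both directions completes the \say{if and only if}.
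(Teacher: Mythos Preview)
Your proof is correct and takes a genuinely different route from the paper. The paper proceeds via \cref{c:miw/v}: given $\bm=\bn\bw/\bv$, it constructs by hand square-free monomials $\tilde{\bv},\tilde{\bw}$ (distinct from $\P(\bv),\P(\bw)$, exactly for the reason you identify) so that $\P(\bm)=\P(\bn)\tilde{\bw}/\tilde{\bv}$ again satisfies the four divisibility conditions, and then reverses the construction for the converse. This involves a careful reindexing of variables and a case analysis of which blocks $x_{i,j}$ land in $\tilde{\bv}$ versus $\tilde{\bw}$. Your argument bypasses all of this by going back to the bare exponent-vector definition in \cref{d:Cax}: once one observes $\gcd(\P(I))=\P(\gcd(I))$ and that the support of $\P(\bn)-\P(\gcd(I))$ is $\{(i,j):g_i<j\le c_i\}$, the two membership conditions translate into the identical clause (A) together with the nearly identical clauses (B) and (B$'$), whose equivalence under (A) is a two-line check. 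Your approach is shorter and avoids the bookkeeping of explicit witnesses; the paper's approach, on the other hand, actually exhibits the $\tilde{\bv},\tilde{\bw}$, which could in principle be reused if one needed the multiplicative decomposition of $\P(\bm)$ explicitly rather than just the membership statement.
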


\begin{proof} 
     Suppose $\bm\in C_I(\bn)$. By \cref{c:miw/v}, $\m=\frac{\bn \bw}{\bv}$ where 
   
   \begin{enumerate}
 \item $\bv \neq 1$;
 \item if $\bw =1 $ then  $\bv\neq \bn$;
 \item $\gcd \Big(\bv, \frac{\bn}{\gcd(I)} \Big)=1$ (in particular $\bv \mid \gcd(I)$);
 \item $\gcd(\bv,\bw) = 1$ (in particular $\bv \mid \bn$).
 \end{enumerate}

 By (3), $\bv\mid \gcd(I)\mid \bn$. Without loss of generality, we permute the variables $x_1,..,x_n$ so that we can assume for positive integers $0<h \leq i \leq j$  
 $$
 \bv=x_1^{v_1}\cdots x_i^{v_i},  \quad 
 \bn=x_1^{\eta_1}\cdots x_j^{\eta_j}, \qand
 \bw=x_{i+1}^{w_{i+1}}\cdots x_{n}^{w_{n}}
 $$ 
 where 
 $$ 
 0\leq w_t, \quad
 v_a = \eta_a, \quad 
 0< v_b <\eta_b, \qand
 0< \eta_c \qfor 
 i < t \leq n, \quad 
 1 \leq a < h, \quad 
 h \leq b \leq i, \qand
 i <c \leq j.
 $$
Then 
$$\bm=\frac{\bn\bw}{\bv}=
x_h^{\eta_h-v_h}\cdots x_i^{\eta_i-v_i} \cdot
x_{i+1}^{\eta_{i+1}+w_{i+1}}\cdots x_{j}^{\eta_{j}+w_j} \cdot
x_{j+1}^{w_{j+1}}\cdots x_{n}^{w_n}
$$
and 
$\displaystyle \P(\bm)= \frac{\P(\bn)\tilde{\bw}}{\tilde{\bv}}$
where 
$$ \begin{array}{ll}
 \tilde{\bv}=&\P(x_1^{\eta_1}\cdots x_{h-1}^{\eta_{h-1}})\cdot
 x_{h,\eta_h-v_h+1}\cdots x_{h,\eta_h}\cdots x_{i,\eta_i-v_i+1}\cdots x_{i,\eta_i}\\
 \tilde{\bw}= &x_{i+1,\eta_{i+1}+1}\cdots x_{i+1,\eta_{i+1}+w_{i+1}} \cdots 
 x_{j,\eta_{j}+1}\cdots x_{j,\eta_{j}+w_{j}} \cdot 
 \P(x_{j+1}^{w_{j+1}}\cdots x_{n}^{w_n}).
 \end{array}$$
It remains to show the four conditions in \cref{c:miw/v} are satisfied, and this follows directly from the construction above, the fact that $\gcd(\P(I))=\P(\gcd(I))$, and the observation that $\bv$ (respectively $\bw$) is a nontrivial monomial if and only if $\tilde{\bv}$ (respectively $\tilde{\bw}$) is a nontrivial monomial.

The other direction ($\Longleftarrow$) is similar: if 
$\P(\bm) \in C_{\P(I)}(\P(\bn))$, we can write 
$$\P(\bm)=\frac{\P(\bn)\bw'}{\bv'}$$ for some 
$\bw', \bv' \in S_{\P(I)}$ such that
 \begin{enumerate}
 \item $\bv' \neq 1$;
 \item if $\bw' =1 $ then  $\bv'\neq \P(\bn)$;
 \item $\gcd \Big(\bv', \frac{\P(\bn)}{\gcd(\P(I))} \Big)=1$ (in particular $\bv' \mid \gcd(\P(I))$);
 \item $\gcd(\bv',\bw') = 1$ (in particular $\bv' \mid \P(\bn)$).
 \end{enumerate}

Suppose 
$$\bm=x_1^{m_1}\cdots x_n^{m_n} \qand 
\bn=x_1^{\eta_1}\cdots x_n^{\eta_n}$$ 
and set 
$$
\bv=x_1^{v_1}\cdots x_n^{v_n} \qand  
\bw=x_1^{w_1}\cdots x_n^{w_n}
$$
where for each $i \in [n]$ let
$$v_i=
\begin{cases} \min(j \st x_{i,j} \mid \bv')-1 & \mbox{if } x_{i,j} \mid \bv' \mbox{ for some } j\\
                   \eta_i & \mbox{otherwise,}
\end{cases}
\qand 
w_i=\begin{cases} \max(j \st x_{i,j} \mid \bw') & \mbox{if } x_{i,j} \mid \bw' \mbox{ for some } j\\
                   0 & \mbox{otherwise.}
\end{cases}
$$

Since $\bv' \mid \P(\bn)$, if $x_{i,j} \mid \bv'$, then $x_{i,j} \mid \P(\bn)$, and so $v_i \leq \eta_i$. In particular $\bv \mid \bn$. For a similar reason $\bw \mid \bm$.

As before,  we permute the variables $x_1,..,x_n$ so that we can assume  
 $$
 \bv=x_1^{v_1}\cdots x_n^{v_n}, \qand \bn=x_1^{\eta_1}\cdots x_j^{\eta_j}, \quad
 \bw=x_{i+1}^{w_{i+1}}\cdots x_{n}^{w_{n}}
 $$ 
 where 
  \begin{align*} 
 0<v_u \lneq \eta_u &\qfor    u=1, \ldots, i\\
 0\leq w_u \qand  v_u=\eta_u &\qfor u = i+1,\ldots,n.
\end{align*}

It follows that $\frac{\P(\bn)}{\P(\bv)}=\bv'$, and 
 \begin{align*} 
 m_u =v_u  &\qfor    u=1, \ldots, i \\
 m_u = \eta_u + w_u & \qfor      u= i+1,\ldots,n .
 \end{align*}

Now let 
$\tilde{\bv}= \frac{\bn}{\bv}
= x_{1}^{\eta_1-v_1}\cdots x_i^{\eta_i-v_i}$. 
Then
$$
\bm=\frac{\bn\bw}{\tilde{\bv}}= 
x_1^{v_1}\cdots x_i^{v_i}
\cdot x_{i+1}^{\eta_{i+1}+w_i}\cdots  x_{n}^{\eta_n+w_{n}}
$$
and we observe that
\begin{enumerate} 
\item $\tilde{\bv}\neq 1$ since $\bv' \neq 1$;

\item If $\bw=1$ then $\bw'=1$, and so $\P(\bm)=\frac{\P(\bn)}{\bv'}$. If also $\tilde{\bv}= \bn$ then  $v_1=\cdots=v_j=1$, which means that $x_{1,1}\cdots x_{j,1} \mid \bv'$, implying that for every $x_u \mid \bm$, $x_{u,1} \nmid \P(\bm)$, which is impossible unless $\bm=1$, or equivalently $\bv'=\P(\bn)$, which contradicts our assumption;

\item 
$\gcd\big(\tilde{\bv},\frac{\bn}{\gcd(I)}\big)=\gcd \big (\frac{\bn}{\bv},\frac{\bn}{\gcd(I)} \big )=\gcd \big (\frac{\P(\bn)}{\P(\bv)}, \frac{\P(\bn)}{\P(\gcd(I))} \big ) 
 =\gcd \big (\bv', \frac{\P(\bn)}{\gcd(\P(I))} \big)=1$;

\item $\tilde{\bv}$  is coprime to $\bw$ by their construction.
\end{enumerate}
So all four conditions in \cref{c:miw/v} are satisfied. We are done.

\end{proof}

\section{The Cohen-Macaulay property}\label{s:SCM}
 Since the operation of adding generators to an  ideal $I$ requires $\gcd(I)=1$, most of the ideals of concern to us are not unmixed, and hence not Cohen-Macaulay. But we can show that if each unmixed component of $I$ is Cohen-Macaulay to begin with, then the expansion ideal will also have Cohen-Macaulay unmixed components. In other words the expansion of a sequentially Cohen-Macaulay ideal remains sequentially Cohen-Macaulay. Thanks to \cref{t:polar}, the problem can be reduced to the square-free case. We  first introduce the one-to-one correspondence between monomials $\bu$ in the polynomial ring $S=\kappa[x_1,\ldots,x_n]$
and subsets $A$ of $[n]$:
$$ \tau_\bu=\{i \in[n] \st x_i \mid \bu\}  \quad \qand \quad 
\bm_A=\prod_{i \in A} x_i.$$

Let $I$ be a square-free monomial ideal in $S$. We associate to $I$ a (unique) simplicial complex $\Gamma$ on the vertex set $[n]$ where $$\Gamma=\{A \subset [n] \st \bm_A \notin I\}.$$

The simplicial complex $\Gamma$ is called the {\bf Stanley-Reisner complex} of $I$,  and $I$ is called
the {\bf Stanley-Reisner ideal} of $\Gamma$.

\begin{example}\label{ex:SR} The Stanley-Reisner complexes for $I=(abc,abd)$ and $I+(\bm)=(abc,abd,cd)$ where $\bm=cd \in C_I(abc)$ appear in \cref{f:SR}. 

\begin{figure}
$$\begin{array}{cc}
\begin{tikzpicture}
\coordinate (A) at (-1,2);
\coordinate (C) at (0, 0);
\coordinate (B) at (1, 2);
\coordinate (D) at (0, 1 );
\draw[draw = none, fill=lightgray] (A) -- (C) -- (D) ;
\draw[draw = none, fill=lightgray] (B) -- (C) -- (D) ;
\draw[black, fill=black] (A) circle(0.04);
\draw[black, fill=black] (B) circle(0.04);
\draw[black, fill=black] (C) circle(0.04);
\draw[black, fill=black] (D) circle(0.04);
\draw[-] (A) -- (B);
\draw[-] (A) -- (C);
\draw[-] (A) -- (D);
\draw[-] (B) -- (C);
\draw[-] (B) -- (D);
\draw[-] (C) -- (D);
\node[label = below :$a$] at (A) {};
\node[label = below :$b$] at (B) {};
\node[label = below :$c$] at (C) {};
\node[label = above:$d$] at (D) {};
\end{tikzpicture}
\qquad & \qquad 
\begin{tikzpicture}
\coordinate (A) at (-1,2);
\coordinate (C) at (0, 0);
\coordinate (B) at (1, 2);
\coordinate (D) at (0, 1 );
\draw[black, fill=black] (A) circle(0.04);
\draw[black, fill=black] (B) circle(0.04);
\draw[black, fill=black] (C) circle(0.04);
\draw[black, fill=black] (D) circle(0.04);
\draw[-] (A) -- (B);
\draw[-] (A) -- (C);
\draw[-] (A) -- (D);
\draw[-] (B) -- (C);
\draw[-] (B) -- (D);
\node[label = below :$a$] at (A) {};
\node[label = below :$b$] at (B) {};
\node[label = below :$c$] at (C) {};
\node[label = above:$d$] at (D) {};
\end{tikzpicture}
\end{array}
$$
\caption{Stanley Reisner complexes for $I$ (left) and $I+(\bm)$ (right) in \cref{ex:SR}}\label{f:SR}
\end{figure}
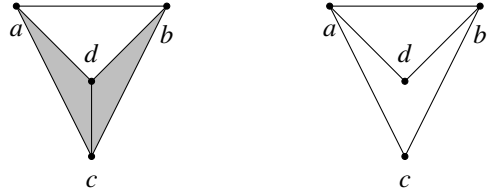

\end{example}

If $I$ is the  Stanley-Reisner ideal of $\Gamma$,  
a theorem of Duval~\cite{Duval} states that $I$   is sequentially Cohen-Macaulay if and only if for every $i\in \{0,\ldots, \dim(\Gamma)\}$, the Stanley-Reisner ideal of its pure $i$-skeleton $\Gamma^{[i]}$ -- the subcomplex whose facets are the $i$-dimensional faces of $\Gamma$ -- is Cohen-Macaulay. Putting this together with  Reisner's criterion~\cite{Reisner} for Cohen-Macaulayness, we conclude that $I$ is sequentially Cohen-Macaulay if and only if 
\begin{equation}\label{e:Reisner}
\tilde{H_j}(\lk_{\Gamma^{[i]}}(\sigma),\kappa) =0 
\qforall
1 \leq i \leq  \dim(\Gamma), \quad 
\sigma \in \Gamma^{[i]} \qand
j < \dim(\lk_{\Gamma^{[i]}})=i- |\sigma|.
\end{equation}

\begin{theorem}[{\bf Expansion preserves sequential Cohen-Macaulay properties}]\label{t:scm} Let $I$ be a square-free monomial ideal in $S=\kappa[x_1,\ldots,x_n]$ and let $\bn$ and $\bm$ be square-free monomials such that $\bn \in \mingens (I)$ and  $\m\in C_{I}(\bn)$. If $I$ is sequentially Cohen-Macaulay, then so is $I+(\bm)$.
\end{theorem}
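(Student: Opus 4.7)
Let $\Gamma = \Delta(I)$ and $\Gamma' = \Delta(I+(\bm))$ be the Stanley--Reisner complexes of $I$ and $I+(\bm)$, and set $T = \tau_\bm$, $N = \tau_\bn$. Since $\bm \notin I$ by \cref{P: m not in I}, $T$ is a face of $\Gamma$, and a direct translation gives $\Gamma' = \{A \in \Gamma \st T \not\subseteq A\}$; that is, $\Gamma'$ is obtained from $\Gamma$ by deleting every face containing $T$. The plan is to verify Reisner's criterion \eqref{e:Reisner} for each pure $i$-skeleton $(\Gamma')^{[i]}$, thereby establishing sequential Cohen--Macaulayness of $I+(\bm)$ via Duval's theorem.

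The key combinatorial consequence of $\bm \in C_I(\bn)$ comes from \cref{P: C_I and "N_I"}(2): every non-face of $\Gamma$ containing $T$ must also contain $N$. Setting $V := N \setminus T$, this gives
$$\lk_\Gamma(T) = \{B \subseteq [n]\setminus T \st V \not\subseteq B\},$$
a simplicial complex whose facets are $([n]\setminus T)\setminus\{v\}$ for $v \in V$. When $T \cup N \neq [n]$, these facets share the nonempty common face $[n]\setminus(T\cup N)$, and the Nerve lemma yields that $\lk_\Gamma(T)$ is contractible. Restricting to each pure skeleton, a parallel local analysis gives, for any $\sigma \in (\Gamma')^{[i]}$ with $T \cap \sigma \subsetneq T$, that the portion removed from $\lk_{\Gamma^{[i]}}(\sigma)$ in passing to $\lk_{(\Gamma')^{[i]}}(\sigma)$ has similarly contractible link structure, controlled by $T\setminus\sigma$ and $V$.

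With these preparations, I would verify Reisner's vanishing for each $\sigma \in (\Gamma')^{[i]}$ and each $j < i - |\sigma|$ by comparing $\lk_{(\Gamma')^{[i]}}(\sigma)$ with $\lk_{\Gamma^{[i]}}(\sigma)$. The latter has vanishing $\rH_j$ for $j < i-|\sigma|$ by Reisner applied to the (Cohen--Macaulay) skeleton $\Gamma^{[i]}$, guaranteed by the sCM hypothesis on $\Gamma$ via Duval's theorem. A Mayer--Vietoris argument decomposing $\lk_{\Gamma^{[i]}}(\sigma)$ into $\lk_{(\Gamma')^{[i]}}(\sigma)$ and the complementary contractible ``star-portion'' associated with $T$ would then transfer the vanishing to $\lk_{(\Gamma')^{[i]}}(\sigma)$ in the required range.

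The main obstacle is that the pure $i$-skeleton operator $\Gamma\mapsto\Gamma^{[i]}$ does not commute with the deletion $\Gamma\mapsto\Gamma'$: a face $\sigma$ may lie in $\Gamma^{[i]}$ only through $i$-faces of $\Gamma$ that contain $T$, and is therefore absent from $(\Gamma')^{[i]}$. Handling these ``lost'' faces — as well as the boundary case $T \cup N = [n]$, in which $\lk_\Gamma(T)$ is a sphere rather than contractible — is the delicate combinatorial heart of the proof. The structural input from $\bm \in C_I(\bn)$, namely that $V \subseteq \tau_{\gcd(I)}$ with $V \cap (T \setminus N) = \emptyset$, and consequently that every facet of $\Gamma$ containing $T$ is of the form $[n]\setminus A$ for a minimal transversal $A \subseteq V$ of the minimal non-faces of $\Gamma$, is what renders this analysis tractable. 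An alternative route worth investigating would exploit the principal colon identity $I : \bm = (\bm_V)$ (a direct consequence of \cref{P: C_I and "N_I"}): the short exact sequence $0 \to (S/(\bm_V))(-\deg\bm) \to S/I \to S/(I+(\bm)) \to 0$ together with Cohen--Macaulayness of $S/(\bm_V)$ allows one to compare $\mathrm{Ext}^j(-,S)$ long exact sequences and transfer Stanley's Ext-characterization of sCM from $S/I$ to $S/(I+(\bm))$.
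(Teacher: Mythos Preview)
Your overall strategy---Duval's characterisation, Reisner's criterion on each pure skeleton, and a Mayer--Vietoris comparison of $\lk_{\Gamma^{[i]}}(\sigma)$ with $\lk_{(\Gamma')^{[i]}}(\sigma)$---is exactly the paper's approach. Your key structural observation is also the right one: with $V=\tau_\bn\setminus\tau_\bm$ one has $\lk_\Gamma(T)=\{B\subseteq[n]\setminus T: V\not\subseteq B\}$, which is equivalent to the paper's claim that $\Gamma=\Omega\cup\langle [n]\setminus\{v\}:v\in\tau_\bv\rangle$ (indeed $V=\tau_\bv$ in the square-free setting).

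The genuine gap is that you stop precisely where the work begins. You write that handling the interaction between the pure-skeleton functor and the deletion ``is the delicate combinatorial heart of the proof'' and leave it there. The paper carries this out by producing an explicit join decomposition
\[
\Gamma^{[i]}=\Omega^{[i]}\cup\bigl(\langle\tau_\bm\rangle*\Sigma^{[d]}\bigr),\qquad \Sigma=\langle\,\overline{\tau_\bv}\setminus\tau_\bm\,\rangle*\partial\langle\tau_\bv\rangle,\quad d=i+1-\deg(\bm),
\]
and then, for $\sigma\in\Omega^{[i]}$ with $\sigma=\sigma_1\cup\sigma_2$ ($\sigma_1=\sigma\cap\tau_\bm$), identifies the Mayer--Vietoris intersection as the join $\lk_{\partial\langle\tau_\bm\rangle}(\sigma_1)*\lk_{\Sigma^{[d]}}(\sigma_2)$. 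Both factors are (links in) boundaries of simplices, so their reduced homology is concentrated in a single top degree; the K\"unneth computation then places the only possible nonvanishing of the intersection exactly at $j=i-|\sigma|$, which is outside the Reisner range $j<i-|\sigma|$. This is what makes the argument go through uniformly, including your ``boundary case'' $T\cup N=[n]$: that case is not special, it simply makes the cone factor $\langle\overline{\tau_\bv}\setminus\tau_\bm\rangle$ empty, and the homology is still confined to the top degree.

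Two minor corrections. First, your stated ``main obstacle'' is slightly misdiagnosed: for Reisner on $(\Gamma')^{[i]}$ you only test $\sigma\in(\Gamma')^{[i]}$, and any such $\sigma$ lies in an $i$-face of $\Gamma'$, hence of $\Gamma$, so $\sigma\in\Gamma^{[i]}$ automatically; the real issue is computing the complementary piece and the intersection in the link, not locating $\sigma$. Second, contractibility of $\lk_\Gamma(T)$ is not what is needed; one needs homology vanishing of the intersection \emph{in the correct range}, and the join-of-sphere-boundaries description gives exactly that.

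Your alternative via $I:\bm=(\bm_V)$ and the long exact sequence in $\mathrm{Ext}$ is a genuinely different route and the colon identity is correct. It is plausible but not automatic: the delicate homological degrees are $j=|V|$ and $j=|V|+1$, where one must show that the connecting maps interact well enough with Stanley's criterion (Cohen--Macaulayness of each nonzero $\mathrm{Ext}^{n-i}$ in dimension $i$). That would require its own argument, which you have not supplied.
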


\begin{proof} 
Suppose $\Gamma$ and $\Omega$ are the Stanley-Reisner complexes of $I$ and $I+(\bm)$, respectively. 
Our goal is to show \eqref{e:Reisner} for $\Omega$, assuming it holds for $\Gamma$.

Since $\bm \notin I$ (\cref{P: m not in I}), we have $\tau_\bm \in \Gamma$. Applying \cref{c:miw/v}, we know there are (square-free) monomials $\bv$ and $\bw$ such that $\bv \mid \gcd(I)$, $\gcd(\bv,\bw)=1$ and 
$$\bm=\frac{\bn\bw}{\bv}.$$ 
In particular 
\begin{equation}\label{e:tauvwmn}
\tau_\bv \subseteq \tau_\bn, 
\quad 
\tau_\bv \cap \tau_\bw=\emptyset 
\qand 
\tau_\bm=(\tau_\bn\setminus \tau_\bv) \cup \tau_\bw.
\end{equation}

\noindent \emph{Claim.} With notation as in \eqref{e:tauvwmn},  
we claim that 
\begin{equation}\label{e:new-cup-bare}
\Gamma=
\Omega \cup \langle [n]\setminus \{x\} \st x \in \tau_\bv\rangle.
\end{equation}

One direction of the equality is clear: $I \subseteq I+(\bm)$ results in
$\Omega \subseteq \Gamma$. Moreover, for each face 
$\sigma_x=[n]\setminus \{x\}$
where 
$x \in \tau_\bv$, 
since 
$\bv \mid \gcd(I)$, 
we can conclude that 
$\gcd(I) \nmid \bm_{\sigma_x}$
 and hence
$\bm_{\sigma_x} \notin I$. Therefore $\sigma_x \in \Gamma$ and  
we have established that
$$
\Gamma \supseteq 
\Omega \cup  \langle [n]\setminus \{x\} \st x \in \tau_\bv\rangle. 
$$

For the reverse inclusion, let $\sigma$ be a face of $\Gamma$, so that $\bm_\sigma \notin I$. Then either $\bm_\sigma \notin I+(\bm)$, in which case $\sigma \in \Omega$, or $\bm \mid \bm_\sigma$. If the latter happens, since $\bm_\sigma \notin I$, we have 
$\bn \nmid \bm_\sigma$ and $\bm \mid \bm_\sigma$. In other words  using also \eqref{e:tauvwmn} 
$$\tau_\bn \not \subseteq \sigma  
\qand  
\tau_\bm = (\tau_\bn\setminus \tau_\bv) \cup \tau_\bw \subseteq \sigma
$$
which gives us  
$\sigma=\tau_\bm \cup \gamma$  
where 
$\tau_\bv \not \subseteq  \gamma$, 
or in other words, 
$$
\sigma \in \langle [n]\setminus \{x\} \st x \in \tau_\bv\rangle. 
$$
This settles \eqref{e:new-cup-bare}.

Set $\overline{\tau_\bv}=[n]\setminus \tau_\bv$  
and 
\begin{equation}\label{e:Sigma}
\partial \langle \tau_\bv \rangle = 
\langle \tau_\bv \setminus \{x\} \st x \in \tau_\bv \rangle
\qand 
\Sigma=\langle \overline{\tau_\bv}\setminus \tau_\bm\rangle * \partial \langle \tau_\bv \rangle.
\end{equation}

Now observe that the only faces of $\Gamma$ which are not in $\Omega$ are those containing $\tau_\bm$, and so for $i \geq 0$, if we let $\Gamma^{[i]}$ denote the pure-$i$-skeleton
of  $\Gamma$, 
 then
\begin{equation}\label{e:G-i}
\Gamma^{[i]}=
\begin{cases}
\Omega^{[i]} & \qif i< \deg(\bm)-1\\
\Omega^{[i]} \cup (\langle\tau_\bm \rangle * \Sigma^{[d]}) 
& \quad \mbox{otherwise},
\end{cases}
\end{equation}
where
$d=i+1-\deg(\bm)$.

We are now ready to establish \eqref{e:Reisner} for $\Omega$. 
Let $\sigma \in \Omega^{[i]}$ for some $i \in \{1,\ldots, \dim(\Omega)\}$.

If $i< \deg(\bm)-1$ or $\tau_\bm \cup \sigma \notin \Gamma$, 
then $\lk_{\Gamma^{[i]}}(\sigma)=
\lk_{\Omega^{[i]}}(\sigma)$. Then it follows immediately that 
if $j<\dim(\lk_{\Omega^{[i]}}(\sigma))$, 
$$\rH_j(\lk_{\Omega^{[i]}}(\sigma))= \rH_j(\lk_{\Gamma^{[i]}}(\sigma))=0 $$ 
as desired, and we are done.

If $i\geq \deg(\bm)-1$ and  $\tau_\bm \cup \sigma \in \Gamma$, we write 
$$\sigma=\sigma_1 \cup \sigma_2 
\qwhere  
\sigma_1=\sigma \cap \tau_\bm 
\qand 
\sigma_2=\sigma \setminus \sigma_1.$$

Then $\tau_\bm \cup \sigma =\tau_\bm \cup \sigma_2 \in (\langle\tau_\bm \rangle * \Sigma)^{[i]}$, and so by \eqref{e:G-i}, $\tau_\bm \cup \sigma_2$  a facet containing it is of the form
$$\tau_\bm \cup A \qwhere 
A \cap \tau_\bm=\emptyset, \quad
\tau_\bv \not \subseteq A, \qand 
|A|=d.$$
Then 
$$(\tau_\bm \cup A)  \setminus \sigma =
(\tau_\bm \setminus \sigma_1) \cup 
(A  \setminus \sigma_2)
\in \lk_{\Gamma^{[i]}}(\sigma).$$
Since $\sigma \in \Omega$, 
$\tau_\bm \not \subseteq \sigma$, and hence 
$\tau_\bm \setminus \sigma_1 \neq \emptyset$. Therefore
$$(\tau_\bm \cup A)  \setminus \sigma \in
\langle \tau_\bm \setminus \sigma_1 \rangle *\lk_{\Sigma^{[d]}}(\sigma_2).$$
We have shown that 
\begin{equation}\label{e:new-cup}
\lk_{\Gamma^{[i]}}(\sigma)=
\begin{cases}
\lk_{\Omega^{[i]}}(\sigma) & \qif i< \deg(\bm)-1 \qor \tau_\bm \cup \sigma \notin \Gamma \\
\lk_{\Omega^{[i]}}(\sigma) 
\cup
\left ( \langle \tau_\bm\setminus \sigma_1 \rangle * \lk_{\Sigma^{[d]}}(\sigma_2 )
\right )
& \quad \mbox{otherwise}.
\end{cases}
\end{equation}

By \eqref{e:new-cup}, the only case we need to discuss is when  
$\tau_\bm \cup \sigma \in \Gamma$ and $i \geq \deg(\bm)$, for which  we set up a Mayer-Vietoris exact sequence
based on \eqref{e:new-cup} below. 
\begin{equation}\label{e:MV-new}
\begin{array}{rll}
\cdots \to&
\rH_{j+1}(\lk_{\Gamma^{[i]}}(\sigma))
&\to
\rH_{j}\left (\lk_{\Omega^{[i]}}(\sigma) \cap 
\big ( \langle \tau_\bm\setminus \sigma_1 \rangle * \lk_{\Sigma^{[d]}}(\sigma_2)\big ) \right )
\to
\rH_{j}(\lk_{\Omega^{[i]}}(\sigma)) 
\oplus
\rH_{j}\left (\langle \tau_\bm\setminus \sigma_1  \rangle * \lk_{\Sigma^{[d]}}(\sigma_2) \right )\\
&& \\
\to  &\rH_{j}(\lk_{\Gamma^{[i]}}(\sigma))
    &\to \cdots 
\end{array}
\end{equation}

We focus on each component of \eqref{e:MV-new}.
\begin{enumerate}
    \item \label{i:cup} $\rH_{j}(\lk_{\Gamma^{[i]}}(\sigma))=0$ for all $j<i-|\sigma|$ by assumption.

\item\label{i:sum}  Since $\tau_\bm \setminus \sigma_1 \neq \emptyset$, $\rH_{j}\left (\langle \tau_\bm\setminus \sigma_1  \rangle * \lk_{\Sigma^{[d]}}(\sigma_2) \right )=0$ for all $j$.
\item \label{i:cap} The intersection $\lk_{\Omega^{[i]}}(\sigma) \cap (\langle \tau_\bm\setminus \sigma_1  \rangle * \lk_{\Sigma^{[d]}}(\sigma_2))$ contains faces $\gamma$ with
$\gamma \cap \sigma =\emptyset$ and where 
$$\gamma=\gamma_1 \cup \gamma_2, \quad
\gamma_1=\gamma \cap \tau_\bm, \quad  
\tau_\bm \supsetneq \gamma_1 \cup \sigma_1, \quad 
\sigma_2 \cup \gamma_2 \in 
\Sigma^{[d]},$$
which means 
$$\gamma_1 \in \lk_{\partial \langle \tau_\bm \rangle}(\sigma_1) 
\qand
\gamma_2 \in 
\lk_{\Sigma^{[d]}}( \sigma_2 ).$$ Therefore 
\begin{equation}\label{e:SO-cap-link-new}
\lk_{\Omega^{[i]}}(\sigma) 
\cap 
(\langle(\tau_\bm\setminus \sigma_1) \rangle * \lk_{\Sigma^{[d]}}(\sigma_2))=
 \lk_{\partial \langle\tau_\bm\rangle}(\sigma_1) 
 *
 \lk_{\Sigma^{[d]}}(\sigma_2).\end{equation}   

From \eqref{e:SO-cap-link-new} (see also~\cite{Bjorner}) we now have 
\begin{equation}\label{e:H*}
\rH_j(\lk_{\Omega^{[i]}}(\sigma) \cap 
(\langle(\tau_\bm\setminus \sigma_1) \rangle * \lk_{\Sigma^{[d]}}(\sigma_2)))
\cong 
 \bigoplus_{a+b=j-1} 
\rH_a(\lk_{\partial \langle\tau_\bm\rangle}(\sigma_1))
\otimes 
\rH_b(\lk_{\Sigma^{[d]}}(\sigma_2)).
\end{equation}

If $\omega=\tau_\bm \setminus \sigma_1$, then the facets of  $\partial \langle\tau_\bm \rangle$ containing $\sigma_1$ are $\tau_\bm \setminus \{w\}$ where $w \in \omega$. Therefore
$$ \lk_{\partial \langle\tau_\bm\rangle}(\sigma_1)=\partial \langle \omega \rangle,$$ and so 
$$\rH_a(\lk_{\partial \langle\tau_\bm \rangle}(\sigma_1))=0 \qforall a\neq |\omega|-2=\deg(\bm)-|\sigma_1|-2.$$
Substituting  in \eqref{e:H*}, we have 
\begin{equation}\label{e:H*1}
\rH_j(\lk_{\Omega^{[i]}}(\sigma) \cap 
\lk_{\langle \tau_\bm \rangle * \Sigma^{[d]}}(\sigma))
\cong 
\rH_{\deg(\bm)-|\sigma_1|-2}(\lk_{\partial \langle\tau_\bm \rangle}(\sigma_1))
\otimes 
\rH_{j-\deg(\bm)+|\sigma_1|+1}(\lk_{\Sigma^{[d]}}(\sigma_2)).
\end{equation}

If $j<i-|\sigma|$, recalling that   $d=i+1-\deg(\bm)$ and $|\sigma|=|\sigma_1|+|\sigma_2|$, we have  
\begin{equation}\label{e:j-deg}
j-\deg(\bm)+|\sigma_1|+1 
< i -|\sigma|  -\deg(\bm)+|\sigma_1|+1=
  d -|\sigma_2|=
  \dim(\lk_{\Sigma^{[d]}}(\sigma_2)).
\end{equation}

Using the description of $\Sigma$ as in
\eqref{e:Sigma}, and keeping in mind that $\sigma_2 \cap \tau_\bm=\emptyset$, let
$$\sigma_2 '=\sigma_2 \cap \overline{\tau_\bv}, \quad  \sigma_2 ''=\sigma_2 \cap \tau_\bv.$$
Since $\sigma_2'' \in \partial \langle\tau_\bv \rangle$, we must have $\sigma_2'' \neq \tau_\bv$.

If we let $\zeta=\tau_\bv 
\setminus \sigma_2''$, then the facets of  $\Sigma^{[d]}$ containing $\sigma_2$ are $(\overline{\tau_\bv}\setminus \tau_\bm) \cup (\tau_\bv \setminus \{z\})$ where $z \in \zeta$. Therefore
$$ \lk_{\Sigma^{[d]}}(\sigma_2)=
\langle \overline{\tau_\bv}\setminus (\tau_\bm \cup \sigma_2') \rangle * \partial \langle\zeta \rangle
$$ 
Hence $\lk_{\Sigma^{[d]}}(\sigma_2)$ has either no homology, or only one nontrivial homology group if 
$\overline{\tau_\bv}\setminus (\tau_\bm \cup \sigma_2')=\emptyset$; in other words 
 $$\rH_b(\lk_{\Sigma^{[d]}}(\sigma_2))=0 \qforall b\neq \dim (\lk_{\Sigma^{[d]}}(\sigma_2))=
 d-|\sigma_2|.$$
 In view of \eqref{e:H*1} and \eqref{e:j-deg}, we can now conclude that 
\begin{equation}\label{e:SO-cap}
\rH_j(\lk_{\Omega^{[i]}}(\sigma) 
\cap 
(\langle \tau_\bm\setminus \sigma_1 \rangle * \lk_{\Sigma^{[d]}}(\sigma_2)))=0 \qif j<i -|\sigma|.
\end{equation}
\end{enumerate}

In  conclusion: \eqref{e:SO-cap} along with \eqref{i:sum} and \eqref{i:cup} applied to the Mayer-Vietoris sequence in \eqref{e:MV-new} allows us to conclude 
$$\rH_j(\lk_{\Omega^{[i]}}(\sigma))\cong \rH_j(\lk_{\Gamma^{[i]}}(\sigma))=0 \qif 
j<i-|\sigma|=\dim(\lk_{\Omega^{[i]}}(\sigma)).$$ 

By Reisner's criterion for Cohen-Macaulayness~(\cite{Reisner}), we can conclude that $\Omega^{[i]}$ is a Cohen-Macaulay complex for every $i$, and hence $I+(\bm)$ is a sequentially Cohen-Macaulay ideal.
\end{proof}

Applying \cref{t:polar} to \cref{t:scm} yields the following more general result.

\begin{corollary}[{\bf Expansion sequential preserves Cohen-Macaulay properties}]\label{c:scm} Let $I$ be a  monomial ideal in $S=\kappa[x_1,\ldots,x_n]$ and let $\bn$ and $\bm$ be  monomials such that $\bn \in \mingens (I)$ and  $\m\in C_{I}(\bn)$. If $I$ is sequentially Cohen-Macaulay, then so is $I+(\bm)$.
\end{corollary}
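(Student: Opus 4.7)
The plan is to bootstrap \cref{t:scm} (the square-free case) to the general case via polarization, using \cref{t:polar}. The core idea is that polarization commutes with the expansion operation $I \rightsquigarrow I+(\bm)$ in an appropriate sense, and that the sequential Cohen--Macaulay property is preserved under polarization and depolarization.

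First I would fix a sufficiently large polynomial ring $T$ (i.e.\ with enough doubled variables $x_{i,j}$) so that both $\mathcal{P}(I)$ and $\mathcal{P}(I+(\bm))$ can be viewed as ideals in $T$, and so that the sequence of variables $x_{i,j}-x_{i,j-1}$ needed for depolarization is regular modulo each of these ideals. With this setup, I would verify the key identity
\[ \mathcal{P}(I+(\bm)) \;=\; \mathcal{P}(I) + (\mathcal{P}(\bm)) \]
in $T$, which uses the definition of polarization monomial-by-monomial together with \cref{P: m not in I} to control how $\mingens(I+(\bm))$ relates to $\mingens(I)$ (the only way these could fail to match is if adding $\bm$ eliminates $\bn$ from the minimal generating set, i.e.\ $\bm \mid \bn$, but since $\bm \in C_I(\bn)$ this case is also handled by the explicit description of $\mathcal{P}(\bm)$ in terms of $\mathcal{P}(\bn)$ obtained in the proof of \cref{t:polar}).

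Once this identity is in hand, the rest is quick. By the standard fact that polarization preserves sequential Cohen--Macaulayness (polarization is a deformation by a regular sequence of linear forms), $\mathcal{P}(I)$ is sequentially Cohen--Macaulay in $T$. By \cref{t:polar}, $\mathcal{P}(\bm) \in C_{\mathcal{P}(I)}(\mathcal{P}(\bn))$, and $\mathcal{P}(\bn) \in \mingens(\mathcal{P}(I))$. Applying \cref{t:scm} to the square-free data $\mathcal{P}(I)$, $\mathcal{P}(\bn)$, $\mathcal{P}(\bm)$, we conclude that $\mathcal{P}(I) + (\mathcal{P}(\bm)) = \mathcal{P}(I+(\bm))$ is sequentially Cohen--Macaulay. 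Depolarizing (quotienting by the regular sequence of differences of doubled variables) then yields that $I+(\bm)$ is sequentially Cohen--Macaulay.

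The main obstacle I anticipate is the bookkeeping in the identity $\mathcal{P}(I+(\bm)) = \mathcal{P}(I) + (\mathcal{P}(\bm))$: one must ensure that the choice of ambient polarization ring $T$ (governed by $\lcm(\mingens(I+(\bm)))$) is compatible with the ring used for $\mathcal{P}(I)$, and that the combinatorial formula for $\mathcal{P}(\bm)$ given in the forward direction of \cref{t:polar}'s proof indeed produces the correct minimal generator of the polarization of $I+(\bm)$. Everything else is an immediate appeal to the stated results together with the classical deformation-theoretic behavior of polarization with respect to the (sequential) Cohen--Macaulay property.
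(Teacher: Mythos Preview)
Your approach is exactly the one the paper takes: it simply states that applying \cref{t:polar} to \cref{t:scm} yields the general result, relying on the standard fact that polarization preserves the sequential Cohen--Macaulay property. Your bookkeeping concerns about the ambient polarization ring and the identity $\mathcal{P}(I+(\bm))=\mathcal{P}(I)+(\mathcal{P}(\bm))$ are legitimate details the paper leaves implicit, but they do not indicate a different strategy.
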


\bibliographystyle{plain}
\bibliography{ref}
\end{document}